\documentclass[11pt]{article}

\usepackage[margin=1.15in]{geometry}
\usepackage{amsmath,amssymb,amsthm,mathtools}
\usepackage{enumitem}
\usepackage[round,authoryear]{natbib}
\usepackage[
    colorlinks=true,
    linkcolor=blue,
    citecolor=blue,
    urlcolor=blue,
    hypertexnames=false,
    pdftitle={Doubling the dimension yields a benign landscape for the squared-stress},
    pdfauthor={Christopher Criscitiello},
    pdfsubject={Optimization landscapes for Euclidean distance geometry},
    pdfkeywords={Euclidean distance geometry, squared stress, benign landscapes,
                 nonconvex optimization, low-dimensional relaxation, matrix sensing}
]{hyperref}
\usepackage{mathrsfs}

\newtheorem{theorem}{Theorem}[section]
\newtheorem{lemma}[theorem]{Lemma}

\theoremstyle{definition}

\newtheorem{remark}[theorem]{Remark}

\newcommand{\Proj}{\operatorname{Proj}}
\newcommand{\R}{\mathbb R}
\newcommand{\RR}{\mathscr R}
\newcommand{\PP}{\mathscr P}
\newcommand{\ones}{\mathbf 1}
\newcommand{\cU}{\mathcal U}
\newcommand{\Y}{\mathcal Y}

\newcommand{\ip}[2]{\left\langle #1,#2\right\rangle}
\newcommand{\norm}[1]{\left\|#1\right\|}
\newcommand{\fro}[1]{\left\|#1\right\|_{\mathrm F}}

\newcommand{\tr}{\operatorname{Tr}}
\newcommand{\rank}{\operatorname{rank}}
\newcommand{\diag}{\operatorname{diag}}
\newcommand{\Diag}{\operatorname{Diag}}
\newcommand{\im}{\operatorname{im}}
\newcommand{\Sym}{\operatorname{Sym}}
\newcommand{\PSD}{\operatorname{PSD}}
\newcommand{\argmax}{\operatorname*{arg\,max}}
\newcommand{\lammax}{\lambda_{\max}}
\newcommand{\lammin}{\lambda_{\min}}
\newcommand{\Pc}{P_c}

\title{Doubling the dimension yields a benign landscape for the squared-stress}
\author{
Christopher Criscitiello\thanks{
The Wharton School, University of Pennsylvania, USA.
\texttt{crisciti@wharton.upenn.edu}.
}
}
\date{August 17, 2026}

\begin{document}

\maketitle

\begin{abstract}
We consider the Euclidean distance geometry problem (EDG): given a subset of the
pairwise distances of an unknown cloud of \(n\) points in \(\R^\ell\), recover the point cloud
up to rigid motions. When \(n\) is large, a popular practical approach is to minimize a
nonconvex quartic, known as the \emph{squared-stress} or \emph{s-stress}, over point clouds in \(\R^k\), with
\(k\) potentially larger than \(\ell\). It is a long-standing open problem to understand the
optimization landscape of the s-stress when all pairwise distances are known
\citep{malone2000sstress,parhizkar2013euclidean}. It was recently shown that the
landscape is not benign when \(k=\ell\), and it was conjectured that the landscape
becomes benign as soon as \(k\ge \ell+1\)
\citep{song2024localupdated,criscitiello2025snl}.

Here, we show that the complete-graph s-stress has a benign landscape whenever \(k\ge 2(\ell+1)\), establishing the conjecture up to a factor of two.
A key
idea is to view second-order criticality as a containment of two ellipsoids; finding a
descent direction then corresponds to finding a separating hyperplane that violates this
containment. This dual perspective yields the stated landscape result, and also applies
to any measurement operator whose inverse satisfies a simple frame condition.
\end{abstract}

\noindent\textbf{Keywords:}
Euclidean distance geometry; squared stress; benign landscapes;
nonconvex optimization; low-dimensional relaxation; matrix sensing.

\medskip

\noindent\textbf{MSC2020:}
Primary 90C26; Secondary 90C30, 90C46, 51K05.

\tableofcontents

\section{Introduction}

We consider the Euclidean distance geometry problem (EDG). For a ground truth
configuration of $n$ points $z_1^\star,\ldots,z_n^\star\in\R^\ell,$ where \(\ell\ge1\),
we observe noiseless Euclidean distances $d_{ij}=\|z_i^\star-z_j^\star\|$
for pairs $\{i,j\}$ belonging to an edge set $E$. The goal is to recover the ground truth
configuration from these distances, up to rigid transformations. EDG has applications in
molecular conformation, wireless sensor networks, statics, dimensionality reduction, and
robotics; see, for example, \citet{liberti2014euclidean}, \citet{liberti2020distance}, and
\citet{mucherino2012distance}.  In most of these applications, $n$ is large while $\ell$ is small, e.g., $\ell \in \{2,3\}$.

A widely used nonconvex formulation for EDG is the \emph{squared-stress}, or \emph{s-stress},
objective \citep{takane1977nonmetric,groenen1996least}. Given an optimization
dimension $k$, one solves
\begin{align}\label{eq:s-stress}
    \min_{z_1,\ldots,z_n\in\R^k}
    s(z_1,\ldots,z_n)
    \qquad
    \text{with}
    \qquad
    s(z_1,\ldots,z_n)
    =
    \frac12
    \sum_{\{i,j\}\in E}
    \left(\norm{z_i-z_j}^2-d_{ij}^2\right)^2 .
    \tag{s-stress}
\end{align}
The obvious choice is $k=\ell$, but one may also overparameterize by optimizing in a
larger dimension $k>\ell$.

In this paper, we focus on the complete graph, meaning that all pairwise distances are
known.\footnote{Extending these results to incomplete graphs is left for future work. A promising approach is outlined in~\citep[\S8.8]{Criscitiello2025thesis}. We also note that the conjectured benign landscape for \(k\ge \ell+1\) is false for incomplete graphs: it already fails when \(E\) is missing a single edge~\citep[\S8.7.3]{Criscitiello2025thesis}.}
In that setting, EDG can be solved by classical multidimensional scaling
\citep{shepard1962analysis,torgerson1958theory,kruskal1964multidimensional,
kruskal1964nonmetric}: one recovers the centered Gram matrix via a linear transformation and then computes an
eigendecomposition.
Our
interest, however, is different: we study the landscape of the nonconvex s-stress itself.

It has been a long-standing question whether the complete-graph s-stress has a benign
landscape \citep{malone2000sstress,parhizkar2013euclidean}. The answer is negative
when $k=\ell$: even for \(n=\ell+2\) points, the complete-graph s-stress may admit spurious local minimizers
\citep{song2024localupdated,criscitiello2025snl}. 
On the other hand, based on numerical experiments, \citet{criscitiello2025snl}
conjecture that these spurious local minimizers disappear as soon as one relaxes by a
single dimension, namely \(k\ge \ell+1\). We establish this conjecture up to a factor of two:
we prove that the complete-graph s-stress has no spurious local minima as soon as
\(k\ge 2(\ell+1)\), regardless of the ground truth.

\subsection*{Benign landscapes and low-dimensional relaxations}

We say that the s-stress has a \emph{benign landscape} if all second-order critical
configurations are global minimizers (see Section~\ref{sec:optimalityconditions}). In particular, there are no spurious local minima, and all saddle points are strict.

A main motivation for studying benign landscapes is algorithmic. Spurious local minimizers can trap local search methods and prevent them from reaching a global solution. In contrast, for problems with benign landscapes, many local search algorithms---including gradient descent and trust-region methods---are provably guaranteed to find global minimizers~\citep{shub1987book,helmke1996optimization,chijinsaddles2017,
jin2018agdescapes,jin2019escape,lee2019strictsaddles,cartis2012complexity,
boumal2016globalrates}.

It is well known that increasing the number of parameters in a nonconvex problem can
eliminate spurious local minimizers and can sometimes produce a benign landscape
\citep{sdplr,burer2005local,bandeira2016lowrankmaxcut,safran2021effects}. In EDG,
it has been observed empirically that allowing the optimization to proceed in dimension $k$
larger than the true embedding dimension $\ell$ substantially improves numerical performance
\citep{fangoleary2011,TasissaLai2019ExactReconstruction,tangtoh2023,
smith2024riemannian,criscitiello2025snl}.

This is the point of view we adopt: we relax the optimization dimension from \(\ell\)
to \(k>\ell\) and ask when the landscape becomes benign. Crucially, we seek the
smallest possible \(k\), since the relaxed problem optimizes over an \(n\times k\)
matrix. As a point of comparison, common SDP-based methods for EDG optimize over
dense \(n\times n\) matrices
\citep{biswas2004semidefinite,biswas2006semidefinite,so2007theory,
zhu2010universal,TasissaLai2019ExactReconstruction}, which becomes prohibitive when
\(n\) is large. Thus, the goal is to prove benign landscape results while keeping
\(k\ll n\).

\subsection*{Main result}

Our main result is the following.

\begin{theorem}[Main result]
\label{thm:intro-informal}
If all pairwise distances are known (i.e., $E$ is complete) and $k\ge 2(\ell+1)$, then~\eqref{eq:s-stress} has a benign landscape, regardless of the ground truth $z_1^\star,\ldots,z_n^\star\in\R^\ell$.
\end{theorem}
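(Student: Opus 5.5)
The plan is to reduce to a statement about the Gram matrix and then run an averaging argument over test directions built from the negative eigenspace of the gradient. The averaging argument is the dual form of the ellipsoid-containment picture from the abstract.

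\textbf{Reduction to the Gram matrix.} Since \eqref{eq:s-stress} is translation invariant, I will restrict to centered configurations \(Z\in\R^{n\times k}\) with \(\ones^\top Z=0\). Write \(\mathcal A(X)_{ij}=X_{ii}+X_{jj}-2X_{ij}\) and \(M=Z^\star Z^{\star\top}\), which has rank at most \(\ell\). Then \(s=g(ZZ^\top)\) with \(g(X)=\tfrac12\fro{\mathcal A(X-M)}^2\), which is convex. Set \(S=\mathcal A^*\mathcal A(ZZ^\top-M)\). The optimality conditions (Section~\ref{sec:optimalityconditions}) then read as follows. First order: \(SZ=0\). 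Second order: for every \(U\) with \(\ones^\top U=0\),
\[
2\ip{S}{UU^\top}+\tfrac12\fro{\mathcal A(ZU^\top+UZ^\top)}^2\ge 0 .
\]
If \(S\succeq0\) on the centered subspace, then \((S,ZZ^\top)\) satisfies the KKT conditions of the convex problem \(\min_{X\succeq0,\,X\ones=0} g(X)\). Because \(M\) is feasible with value \(0\), this forces \(\mathcal A(ZZ^\top)=\mathcal A(M)\), so \(Z\) is a global minimizer. The goal is therefore to show that every second-order critical \(Z\) has \(S\succeq0\) on \(\ones^\perp\).

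\textbf{Two easy cases and the key identity.} Suppose \(v\) is a unit centered vector with \(v^\top Sv=-\lambda<0\). If \(\rank Z<k\), take \(U=vw^\top\) with \(w\in\ker Z\). This gives \(ZU^\top=0\), so the second-order expression is negative, a contradiction. Hence \(\rank Z=k\ge2\ell+2\). The first-order condition gives \(\ip{S}{ZZ^\top}=0\), so
\[
\ip{S}{M}=-\fro{\mathcal A(ZZ^\top-M)}^2<0 .
\]
Thus the negative part of \(S\) is "seen" by \(M\), which has rank at most \(\ell\), while the columns of \(Z\) lie in \(\ker S\).

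\textbf{Dual certificate and averaging.} For a fixed negative direction, second-order criticality says that the quadratic form \(w\mapsto\tfrac12\fro{\mathcal A(Zwv^\top+vw^\top Z^\top)}^2\) dominates \(2\lambda\norm{w}^2\) on \(\R^k\). This is a containment of two ellipsoids, and a descent direction is exactly a separating direction. I would exhibit one by averaging: sum the second-order inequality over \(U=v_a w_b^\top\), where \(v_a\) ranges over the negative eigenvectors of \(S\) (weighted by eigenvalue) and \(w_b\) over an orthonormal basis of a well-chosen subspace of \(\R^k\). The curvature term is then controlled using the explicit form of \(\mathcal A^*\mathcal A\) for the complete graph. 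On centered \(Y\) it is \(2n\,Y\) plus a diagonal-coupling correction, and its inverse satisfies a frame-type bound, so \(\fro{\mathcal A(ZU^\top+UZ^\top)}^2\) is comparable to \(n\fro{ZU^\top}^2\) plus diagonal terms. Summed against the negative part of \(S\), these terms should reassemble, through \(\ip{S}{M}<0\) and \(SZ=0\), into a quantity that is strictly negative once \(k\ge2(\ell+1)\).

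\textbf{Main obstacle.} The hard step is choosing the averaging subspace. I would try the \(k-\ell-1\) directions in \(\R^k\) orthogonal to \(Z^\top\) applied to \(\operatorname{span}(M)\oplus\ones\); their number is at least \(\ell+1\) exactly when \(k\ge2\ell+2\). The factor two should emerge from the symmetrization \(ZU^\top+UZ^\top\), which costs a factor of \(2\) in the curvature bound. Verifying that the diagonal-coupling terms of \(\mathcal A^*\mathcal A\) do not spoil the inequality is where I expect most of the work to lie.
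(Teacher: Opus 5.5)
\textbf{The gap: the choice of test directions.} Your reductions are sound. Your averaging subspace $\{x\in\R^k: Zx\perp\im Z^\star\}$ is, after the substitution $u=\Sigma^{1/2}x$, exactly the paper's $\mathcal N=\ker(V^\top Y_\star V)$; I write $x$ for your test vector $w$ to avoid a clash with the paper's $w_i=W^\top a_i$.

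The problem is the test directions. A perturbation $U=vx^\top$ with $v\perp\im Z$ gives a Gram perturbation $\dot Y$ with \emph{zero} $V,V$-block. For such $\dot Y$ the curvature is not ``comparable to $n\fro{ZU^\top}^2$''. For centered $Y$ one has $\fro{\mathcal A(Y)}^2=4\fro{Y}^2+2n\norm{\diag Y}^2+2(\tr Y)^2$, so here $\fro{\mathcal A(\dot Y)}^2=4\fro{\dot Y}^2+2n\norm{\diag\dot Y}^2$ with $\diag(\dot Y)(i)=2(Zx)(i)\,v(i)$. The factor $n$ sits entirely on the diagonal.

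In the paper's notation ($v=W\xi$, and $S=L(Y_\star-Y)=W\Omega W^\top$, which is $-\frac14$ of your $S$), your directions yield only $(u^\top\Sigma^{-1}u)(\xi^\top\Omega\xi)\le\norm{u}^2\norm{\xi}^2+n\sum_i(v_i^\top u)^2(w_i^\top\xi)^2$. Average this over an orthonormal basis of $\mathcal N$ and over $\xi$ with covariance $K$, using the best available bounds ($u^\top\Sigma u=u^\top Pu$ on $\mathcal N$, $\tr P\le(1-\eta)\tr K$, $\norm{v_i}^2\le1$). The result is only $d^2\lesssim(d+n)\ell$, i.e.\ the $k\gtrsim\ell+\sqrt{n\ell}$ threshold that cross-block directions gave in earlier work (Remark~\ref{rem:comparisontoolddescentdirections}). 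Nothing in your setup bounds the coherence sum $n\sum_i\norm{\Proj_{\mathcal N}v_i}^2w_i^\top Kw_i$ better than $n(1-\eta)\tr K$.

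\textbf{The missing idea: an in-$\im Z$ component.} Take $\dot Z=(\frac12VA+WB^\top)\Sigma^{-1/2}$ and, for each $B$, choose $A$ to minimize the curvature. This is free, since the stress term depends only on $B$, and $VAV^\top$ absorbs most of the diagonal. Evaluating that minimum is exactly where \eqref{eq:EDM-inverse} enters. The paper does it dually, through the support function of the shadow $\mathcal K_Q$ of the energy ellipsoid (not its $A=0$ slice). This yields $\norm{u}^2\norm{\xi}^2-2s_\alpha^\top h_\alpha\le(u^\top\Sigma u)(\xi^\top\Omega^\dagger\xi)$ with $s_\alpha^\top h_\alpha\le\sum_i(v_i^\top u)^2(w_i^\top\xi)^2/(1-\tau_i)$, so the diagonal costs a factor of order $2$ instead of $n$.

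\textbf{Two further missing ingredients.} First, $\ip{S}{M}<0$ is too little first-order information. The identity $Y_\star=Y+L^{-1}S$ with $L^{-1}=I-\Gamma$ shows that the stress is semidefinite at \emph{every} first-order point: your $S\preceq0$ on $\ones^\perp$, so your goal is really $S=0$. Its blocks give $V^\top Y_\star V=\Sigma-P$ and $\tr P\le(1-\eta)\tr K$ with $K=W^\top Y_\star W$. This is what makes $Z$ small on your subspace, i.e.\ $\norm{Zx}^2$ small relative to $\norm{x}^2$; without it you cannot compare the stress term with the curvature.

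Second, the probes $v=W\xi$ must have covariance $K$, not eigenvalue weights from $S$. With $R=\Omega$ the paper's averaged bound becomes $d\le(1-\eta)(\rank\Omega\cdot\tr K/\tr\Omega+2)$, which is useless when $K\approx\Omega$ and $\rank\Omega$ is large (up to $p-k$). By contrast, $0\preceq K\preceq\Omega$ gives $\tr(\Omega^\dagger K)\le\rank K\le\ell$. The factor two then comes from $k-\ell\le d\le(1-\eta)(\rank K+2)$, with $\ell$ contributed by $\rank K$ and the $2$ by the diagonal correction. It does not come from the symmetrization $ZU^\top+UZ^\top$.

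Finally, the second variation is $2\ip{S}{UU^\top}+\fro{\mathcal A(ZU^\top+UZ^\top)}^2$. With your $\frac12$ in front of the curvature term, you would be assuming a second-order condition stronger than criticality provides.
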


Because the complete graph is trivially universally rigid \citep{gortlerThurston2014UniversalRigidity},
every global minimizer of the complete-graph s-stress corresponds to the ground truth
up to rigid motion, even when \(k>\ell\).

Theorem~\ref{thm:intro-informal} is a direct consequence of the more general
Theorem~\ref{thm:main}. The latter is formulated for measurement operators whose
inverses satisfy a simple frame condition. This abstraction 
isolates the precise structural properties of the
s-stress objective used in the proof. 

The remainder of the paper is organized as follows.
Section~\ref{sec:complete-edg} reviews
complete-graph EDG. Section~\ref{sec:frame} introduces the structured-inverse
setting, states the main landscape theorem (Theorem~\ref{thm:main}), and
reduces its proof to a key intermediate result, Theorem~\ref{thm:intermediate}.
The proof of Theorem~\ref{thm:intermediate} is then developed in
Section~\ref{sec:proof-intermediate}.
A key ingredient is the dual perspective introduced in Section~\ref{sec:directions}, where descent directions are defined variationally and second-order criticality is interpreted as an ellipsoid-containment condition.

Finally, by refining the analysis of Section~\ref{sec:proof-intermediate},
Section~\ref{sec:meq1} proves that the landscape of the s-stress is benign at the conjectured
threshold \(k=\ell+1\), provided \(n\le \ell+3\). Although this regime is not
computationally attractive, since \(k\) is of the same order as \(n\), it provides
further theoretical evidence for the conjecture that a single dimension of
relaxation suffices. 
The proof also introduces \emph{Schur-companion directions},
which supplement the kernel directions used in the main argument and may provide
a useful ingredient for resolving the full \(k\ge\ell+1\) conjecture.

\section{Related work}\label{sec:relatedwork}

The literature on Euclidean distance geometry is vast, as is the literature on benign
landscapes. We focus only on the works most closely related to the present paper.
For additional references and historical background on EDG, we refer the reader to \citep[Sec.~2]{criscitiello2025snl}; for a broader overview of benign landscape results, see~\citep[Ch.~6]{Criscitiello2025thesis}.

\paragraph{Landscapes for the s-stress.}
The first (and, to date, only) low-dimensional landscape guarantees for the s-stress were established by~\citet{criscitiello2025snl}. 
For the complete graph, they showed that the landscape is benign for arbitrary ground truths whenever \(k\gtrsim \ell+\sqrt{n\ell}\), and, for isotropic random ground truths, whenever \(k\gtrsim \ell+\log n\) with high probability.  The present paper builds on the analysis underlying the former result. In particular, several ingredients of our first-order analysis
(Section~\ref{sec:first-order}) are adapted from~\citet{criscitiello2025snl};
see also Remark~\ref{rem:comparisontoolddescentdirections}, where we explain that
the descent directions of~\citet{criscitiello2025snl} arise as a natural proxy
for the descent directions introduced in the present paper.

The guarantee for isotropic random ground truths 
was subsequently extended in~\citep[\S8.8]{Criscitiello2025thesis} to certain incomplete graphs. Specifically, if $E$ in~\eqref{eq:s-stress} is obtained from the complete graph by removing $r$ edges, then the landscape remains benign with high probability provided $k\gtrsim (1+r^3)(\ell+\log n).$

\paragraph{Related benign landscapes.}
EDG is an instance of low-rank matrix sensing~\citep{Chi_2019}, with positive semidefinite measurement
matrices \((e_i-e_j)(e_i-e_j)^\top\) for \(\{i,j\}\in E\).  Consequently, it is closely
related to a large literature establishing benign landscapes for quartic formulations of
matrix sensing, including matrix completion and phase retrieval.

For matrix completion, \citet{ge2016matrix,ge2017nospurious} prove that there are no spurious local
minima without relaxation (\(k=\ell\)), under incoherence assumptions on the ground truth. 
In the identity-sensing case \(L=I\) (see Remark~\ref{rem:idsensing}), their analysis
recovers the classical benign-landscape result for arbitrary ground truths without
relaxation. A recurring descent direction in this line of work is the Procrustes residual \(\dot Z = Z-Z_\star Q\), where \(Q\) is orthogonal, and is chosen by an orthogonal Procrustes alignment.

Similar residual directions also appear in phase retrieval.  For example,~\citet{sun2016geometric} use them to establish a benign landscape for Gaussian phase
retrieval.  More recently,~\citet{mcrae2025phaseretrievalmatrixsensingupdated} develops a
landscape framework for low-rank matrix sensing that exploits the positive semidefiniteness
of the measurement matrices. Rather than the classical Procrustes residual, his
approach uses randomized least-squares residual directions of the form $\dot Z=(Z_\star-ZR_{\rm ls})G,$ where \(R_{\rm ls}\) is the least-squares alignment (see
Remark~\ref{rem:lsresidual}) and \(G\) is a matrix of i.i.d.\ Gaussian entries.  This framework yields phase-retrieval guarantees with optimal statistical sample complexity.

\paragraph{Convex approaches.}
For incomplete graphs, the Euclidean distance geometry problem is NP-hard even under natural rigidity assumptions~\citep{eren2004rigidity,aspnes2004computational,aspnes2006theory}.
Nevertheless, under suitable assumptions, EDG admits tractable convex formulations based on semidefinite programming. In particular, SDP methods can recover \emph{universally rigid} configurations in polynomial time~\citep{biswas2004semidefinite,biswas2006semidefinite,so2007theory,zhu2010universal}.
Building on ideas from matrix completion, \citet{TasissaLai2019ExactReconstruction} also establish efficient recovery guarantees for a natural SDP from randomly sampled distance measurements, under suitable incoherence assumptions.

Despite their strong theoretical guarantees, these approaches optimize over dense \(n\times n\) matrix variables, leading to memory and computational costs that scale at least quadratically with \(n\).  This motivates low-dimensional nonconvex formulations that factor the Gram matrix and optimize directly over an \(n\times k\) variable with \(k\ll n\).

\section{Complete-graph Euclidean distance geometry}
\label{sec:complete-edg}

This section covers the basic objects for complete-graph EDG and their properties. For
more background on these objects, see \citet[Sec.~3]{criscitiello2025snl}.

\subsection{Notation}

Let $e_i\in\R^n$ denote the $i$th standard basis vector, and let $\ones\in\R^n$ denote
the all-ones vector. 
For a vector $x\in\R^n$, its $i$th entry is denoted interchangeably by $x_i$ or $x(i)$, whichever is more convenient.
Sometimes we denote $x$ by $(x_i)_i$.
The $(i,j)$-entry of a matrix $X$ is $X_{ij}$. 
The $k\times k$ identity matrix is denoted by $I_k$; when the dimension is clear from
context, we simply write $I$. We also use $I$ to denote the identity operator on a
space of matrices.

The set of $n\times n$ symmetric matrices is denoted $\Sym(n)$. Also denote
$$\mathrm{PSD}(n) = \{X \in \Sym(n) : X \succeq 0\}, \qquad \mathrm{PSD}_{\leq k}(n) = \{X \in \mathrm{PSD}(n) : \rank(X) \leq k\}.$$
For $X\in\PSD(n)$, we write $X^{1/2}$ for its unique positive semidefinite square root,
$X^\dagger$ for its Moore--Penrose pseudoinverse, and
$X^{\dagger/2}:=(X^\dagger)^{1/2}$.

For a linear subspace
$\cU\subseteq\R^n$, let $P_{\cU} \in \mathbb{R}^{n \times n}$ denote the orthogonal projector onto $\cU$, and define
\[
    \Sym(\cU):=\{Y\in\Sym(n):Y=P_{\cU}YP_{\cU}\}.
\]
If $X\in\Sym(n)$, we define the trace of the
compression of $X$ to $\cU$ by $\tr_{\cU}(X):=\tr(X P_{\cU}).$

For EDG, the relevant subspace is the centered subspace
\[
    1^\perp := \{x\in\R^n:\ones^\top x=0\}.
\]
The orthogonal projector onto $1^\perp$ is $\Pc=I-\frac1n\ones\ones^\top.$
The set of \emph{centered matrices} is
$$\mathrm{Cent}(n) := \Sym(1^\perp) = \{Y\in\Sym(n):Y\ones = 0\}.$$

For a matrix \(X\), let \(\diag(X)\) denote the vector of its diagonal entries. For a vector
\(x\), let \(\Diag(x)\) denote the diagonal matrix with diagonal \(x\); when convenient, we
instead write \(\Diag(x_i)\). Finally, we define $\Diag(X):=\Diag(\diag(X)).$

We always use Frobenius inner products, unless indicated otherwise:
\[
    \ip{X}{Y}=\tr(X^\top Y),
    \qquad
    \fro{X}^2=\tr(X^\top X).
\]
For $X\in\Sym(n)$, its smallest and largest eigenvalues are denoted
$\lammin(X)$ and $\lammax(X)$. The image of a matrix $X$, i.e., the span of its
columns, is denoted $\im(X)$; its kernel is denoted $\ker(X)$; and the dimension of its
kernel is denoted $\dim \ker(X)$.

\subsection{The EDM map}

Given a configuration of points $z_1,\ldots,z_n\in\mathbb{R}^k$, let
$Z\in\mathbb{R}^{n\times k}$ denote the matrix whose $i$th row is $z_i^\top$.
Its Gram matrix $Y=ZZ^\top$ records the pairwise inner products, with entries
$Y_{ij}=z_i^\top z_j$. The squared Euclidean distance between points $z_i$ and
$z_j$ is
\[
    \norm{z_i-z_j}^2=Y_{ii}+Y_{jj}-2Y_{ij}.
\]
Define the EDM map $\Delta\colon\Sym(n)\to\Sym(n)$, short for \emph{Euclidean distance matrix},
by
\begin{equation}\label{eq:EDM-map}\tag{EDM-map}
\begin{split}
    \Delta(Y)
    &=
    \frac12\big(\ones\,\diag(Y)^\top+\diag(Y)\,\ones^\top-2Y\big), \\
    \Delta(Y)_{ij}
    &=
    \frac12(Y_{ii}+Y_{jj}-2Y_{ij}).
\end{split}
\end{equation}
Thus $2\Delta(ZZ^\top)$ is the matrix of pairwise squared distances.

For complete-graph EDG,~\eqref{eq:s-stress} can now be written as
\begin{equation}\label{eq:s-stress-complete}\tag{s-stress-complete}
\begin{split}
    s(Z)&=\fro{\Delta(ZZ^\top-Z_\star^{} Z_\star^\top)}^2 \\
    &= \ip{ZZ^\top-Z_\star^{} Z_\star^\top}{\Delta^* \Delta(ZZ^\top-Z_\star^{} Z_\star^\top)},
    \end{split}
\end{equation}
where $\Delta^*$ is the adjoint of $\Delta$, with respect to the Frobenius inner product on $\Sym(n)$.

The map \(\Delta\) is not invertible on all of \(\Sym(n)\): its kernel consists of
matrices of the form
\[
    \ones x^\top+x\ones^\top,
    \qquad x\in\R^n.
\]
The orthogonal complement of this kernel is the space of centered matrices
\(\mathrm{Cent}(n)=\Sym(1^\perp)\). 
Restricting to centered matrices, $\Delta^\ast\Delta \colon \mathrm{Cent}(n) \to \mathrm{Cent}(n)$ is invertible and satisfies
\begin{align}\label{eq:EDM-inverse}\tag{EDM-inverse}
    (\Delta^\ast\Delta)^{-1}(X)=X-\Pc\Diag(X)\Pc.
\end{align}
This is the only specific formula involving \(\Delta\) that we need in order to prove Theorem~\ref{thm:intro-informal}.

The operator $\Delta^\ast\Delta$ has eigenvalues $1$, $n/2$, and $n$ on centered
matrices. Therefore, it is far from an
approximation of the identity when $n$ is large. Moreover, it fails the restricted
isometry property (RIP) commonly used to prove benign landscapes in low-rank matrix sensing~\citep{ge2016matrix,bhojanapalli2016global,zhang2018howmuchrip}. For the derivation of \eqref{eq:EDM-inverse}, the eigenanalysis of \(\Delta^* \Delta\), and the RIP failure calculation, see \citet[Sec.~3.3]{criscitiello2025snl}.

\section{A structured-inverse landscape result}
\label{sec:frame}

We now move from the specific EDM map to a more general class of measurement
operators. The common feature is that the inverse operator has the same
``identity minus frame operator'' form as~\eqref{eq:EDM-inverse}. At the end of
this section, we state a general theorem for any measurement operator with this property.

\subsection{The abstract problem and the structured-inverse hypothesis}\label{sec:symmetry}
Let $\cU\subseteq\R^n$ be a $p$-dimensional subspace, and let
\[
    L\colon\Sym(\cU)\to\Sym(\cU)
\]
be self-adjoint and positive definite. We extend $L$ to all of $\Sym(n)$ by
\[
    L(X):=L(P_{\cU}XP_{\cU}).
\]
Thus $L(X)\in\Sym(\cU)$, and the quadratic form induced by $L$ only sees the
$\cU$-block of $X$.

For arbitrary $Z\in\R^{n\times k}$ and $Z_\star\in\R^{n\times \ell}$, set
\[
    Y=ZZ^\top,
    \qquad
    Y_\star=Z_\star^{} Z_\star^\top,
\]
and consider
\begin{align}\label{eq:P}
    \min_{Z\in\R^{n\times k}}
    g(Z;Z_\star)
    :=
    \ip{ZZ^\top-Y_\star}{L(ZZ^\top-Y_\star)}.
    \tag{P}
\end{align}
Our goal is to understand the landscape of~\eqref{eq:P}. In the special case
$L=\Delta^*\Delta$ and $\mathcal U = 1^\perp$,
the objective $g$ coincides with the complete-graph s-stress: $g(Z;Z_\star)=s(Z)$; see~\eqref{eq:s-stress-complete}.

\paragraph{Symmetry.}
Since \(L\) first projects onto \(\Sym(\cU)\),
\begin{align}\label{eq:symmetry}
    g(Z;Z_\star)=g(P_{\cU}Z;P_{\cU}Z_\star).
\end{align}
Consequently, if \(Z\) is a second-order critical point of
\(g(\cdot;Z_\star)\), then \(P_{\cU}Z\) is a second-order critical point of
\(g(\cdot;P_{\cU}Z_\star)\), with the same objective value.
This also follows directly by inspecting the first- and second-order
criticality conditions \eqref{eq:1-criticality}--\eqref{eq:2-criticality} below.

Hence, without loss of generality, we may replace \(Z_\star\) by
\(P_{\cU}Z_\star\) and assume
\[
    Y_\star\in\Sym(\cU),
    \qquad
    Y_\star\succeq0,
    \qquad
    \rank(Y_\star)\le \ell.
\]
Moreover, to prove that~\eqref{eq:P} has no non-global second-order critical
points, it suffices to show that every second-order critical point \(Z\) whose
columns lie in \(\cU\) (equivalently, \(Y\in\Sym(\cU)\)) is globally optimal.

\subsubsection*{The structured-inverse hypothesis}

Assume there are vectors
\begin{align}\tag{frame-atoms}
    a_1,\ldots,a_N\in\cU
\end{align}
and a constant \(\eta\in(0,1]\) such that
\begin{align}
    \sum_{i=1}^N a_i^{} & a_i^\top \preceq P_{\cU},
    \tag{F1}\label{eq:F1}\\
    \norm{a_i}^2 &\le 1-\eta
    \qquad
    \forall i,
    \tag{F2}\label{eq:F2}\\
    L^{-1}=I-\Gamma,
    \qquad &
    \Gamma(X)=\sum_{i=1}^N a_i^{} a_i^\top(a_i^\top Xa_i).
    \tag{F3}\label{eq:F3}
\end{align}
Condition~\eqref{eq:F1} is an upper-frame (or Bessel) condition on the atoms \(\{a_i\}_{i=1}^N\). 
When equality holds in~\eqref{eq:F1}, it reduces to a Parseval tight-frame condition. We do not require the number of atoms \(N\) to coincide with the number of points \(n\), and we allow the degenerate case \(N=0\).

For complete-graph EDG,~\eqref{eq:EDM-inverse} shows that \(L=\Delta^*\Delta\) satisfies~\eqref{eq:F3} with
\begin{align}\label{eq:EDGinverseframeconditions}\tag{EDG-frame}
    \cU=1^\perp,
    \qquad
    a_i=\Pc e_i,
    \qquad
    i=1,\ldots,n.
\end{align}
In that case,
\[
    \sum_{i=1}^n a_i^{} a_i^\top=\Pc,
    \qquad
    \norm{a_i}^2=1-\frac1n,
\]
so~\eqref{eq:F1} holds with equality and~\eqref{eq:F2} holds with \(\eta=1/n\).

\begin{remark}
Conditions~\eqref{eq:F1}--\eqref{eq:F3} are closely related to the framework proposed
in~\citep[App.~A]{criscitiello2025snl}.  In particular, condition Q5 therein is precisely
the representation~\eqref{eq:F3}. The assumptions of~\citep[App.~A]{criscitiello2025snl}
are stronger, however, as they additionally require a quantitative conditioning property
(Q4 therein).
\end{remark}

\begin{lemma}[Basic properties of $\Gamma$]
\label{lem:gamma-properties}
For $X\in\Sym(\cU)$, the map $\Gamma$ given by~\eqref{eq:F1}--\eqref{eq:F3} satisfies:
\begin{enumerate}[label=(\roman*)]
    \item if $X\succeq0$, then $\Gamma(X)\succeq0$;
    \item if $X\succeq0$, then $\tr\Gamma(X)\le(1-\eta)\tr X;$
    \item if \(\lambda_{\max}(X)\geq0\), then $\lambda_{\max}(\Gamma(X))
        \le
        (1-\eta)\lambda_{\max}(X);$
    \item $I-\Gamma \colon \Sym(\cU) \to \Sym(\cU)$ is positive definite.
\end{enumerate}
\end{lemma}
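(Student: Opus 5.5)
Each item follows directly from the definition of $\Gamma$ in~\eqref{eq:F3} combined with~\eqref{eq:F1}--\eqref{eq:F2}; the key observation is that for $X\succeq0$ each coefficient $a_i^\top X a_i$ is nonnegative.

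For (i), $\Gamma(X)=\sum_i (a_i^\top X a_i)\,a_i^{}a_i^\top$ is then a nonnegative combination of PSD rank-one matrices, hence PSD.

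For (ii), take traces and apply~\eqref{eq:F2} and then~\eqref{eq:F1}:
\[
\tr\Gamma(X)=\sum_i \norm{a_i}^2\, a_i^\top X a_i \le (1-\eta)\sum_i a_i^\top X a_i=(1-\eta)\ip{X}{\textstyle\sum_i a_i^{}a_i^\top}\le(1-\eta)\ip{X}{P_{\cU}}=(1-\eta)\tr X,
\]
where the final inequality holds because $X\succeq0$ and $\sum_i a_ia_i^\top\preceq P_{\cU}$, and $\ip{X}{P_{\cU}}=\tr X$ since $X\in\Sym(\cU)$.

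For (iii), let $\lambda=\lammax(X)\ge0$. Since $X\in\Sym(\cU)$, we have $X\preceq \lambda P_{\cU}$. Each $a_i\in\cU$, so $a_i^\top X a_i\le\lambda\norm{a_i}^2\le\lambda(1-\eta)$ by~\eqref{eq:F2}. The coefficients $a_i^\top X a_i$ may be negative, which is the only delicate point. Handle it by bounding the quadratic form directly: for a unit vector $v$,
\[
v^\top\Gamma(X)v=\sum_i (a_i^\top v)^2(a_i^\top X a_i)\le \lambda(1-\eta)\sum_i (a_i^\top v)^2\le\lambda(1-\eta)\, v^\top P_{\cU}v\le \lambda(1-\eta).
\]
Here $\lambda\ge0$ is used, together with~\eqref{eq:F1}. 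Taking the supremum over $v$ gives (iii).

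For (iv), let $X\in\Sym(\cU)$ be nonzero. The goal is $\ip{X}{X-\Gamma(X)}>0$. Write
\[
\ip{X}{\Gamma(X)}=\sum_i (a_i^\top X a_i)^2 .
\]
Using $|a_i^\top X a_i|\le \norm{X a_i}\norm{a_i}$ and~\eqref{eq:F2}, we get $(a_i^\top Xa_i)^2\le(1-\eta)\,a_i^\top X^2 a_i$. Summing and applying~\eqref{eq:F1} to the PSD matrix $X^2$ gives
\[
\sum_i (a_i^\top X a_i)^2\le(1-\eta)\tr(X^2P_{\cU})=(1-\eta)\fro{X}^2 .
\]
Hence $\ip{X}{(I-\Gamma)X}\ge\eta\fro{X}^2>0$. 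Self-adjointness of $\Gamma$ on $\Sym(\cU)$ is immediate from the symmetric form $\ip{Y}{\Gamma(X)}=\sum_i(a_i^\top Ya_i)(a_i^\top Xa_i)$, and $\Gamma$ maps into $\Sym(\cU)$ because $a_i\in\cU$. So $I-\Gamma$ is a positive definite self-adjoint operator on $\Sym(\cU)$.

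The only mild obstacle is the sign issue in (iii) and (iv), where $X$ is not PSD; the pointwise bound on $a_i^\top X a_i$ in (iii) and the Cauchy–Schwarz step in (iv) resolve it.
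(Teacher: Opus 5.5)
Your proof is correct and follows the paper's argument essentially step for step. For (i) you use nonnegativity of the coefficients $a_i^\top X a_i$; for (ii) you apply \eqref{eq:F2} and then \eqref{eq:F1}; for (iii) you bound the quadratic form using the nonnegative weights $(a_i^\top v)^2$; and for (iv) you use Cauchy--Schwarz and then apply \eqref{eq:F1} to $X^2$. The only cosmetic differences are that in (iii) you bound $a_i^\top X a_i \le (1-\eta)\lambda$ before summing, and that you explicitly check that $\Gamma$ is self-adjoint on $\Sym(\cU)$, which the paper leaves implicit.
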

\begin{proof}
If $X\succeq0$, then $a_i^\top Xa_i\ge0$, and hence
\[
    \Gamma(X)=\sum_i a_i a_i^\top(a_i^\top Xa_i)\succeq0.
\]
This establishes \emph{(i)}, i.e., $\Gamma$ is completely positive.  For the trace contraction \emph{(ii)},
\[
    \tr\Gamma(X)
    \stackrel{\eqref{eq:F3}}{=}
    \sum_i\norm{a_i}^2(a_i^\top Xa_i)
    \stackrel{\eqref{eq:F2}}{\le}
    (1-\eta)\sum_i a_i^\top Xa_i
    =
    (1-\eta)\bigg\langle \sum_i a_i a_i^\top, X \bigg \rangle
    \stackrel{\eqref{eq:F1}}{\leq}
    (1-\eta)\tr X.
\]
For the spectral contraction \emph{(iii)}, let \(X\in\Sym(\cU)\) satisfy
\(\lambda_{\max}(X)\geq 0\), and let \(y\in\cU\) be a unit vector. Therefore, $a_i^\top Xa_i\le \lambda_{\max}(X)\|a_i\|^2$, and
\[
\begin{aligned}
    y^\top\Gamma(X)y
    &=
    \sum_i (y^\top a_i)^2(a_i^\top Xa_i)
    \le
    \lambda_{\max}(X)\sum_i (y^\top a_i)^2\|a_i\|^2\\
    &\stackrel{\eqref{eq:F2}}{\le}
    (1-\eta)\lambda_{\max}(X)\sum_i (y^\top a_i)^2
    \stackrel{\eqref{eq:F1}}{\leq}
    (1-\eta)\lambda_{\max}(X).
\end{aligned}
\]
Taking the supremum over
unit \(y\in\cU\) gives $\lambda_{\max}(\Gamma(X))
    \le
    (1-\eta)\lambda_{\max}(X).$

Finally, let us show \emph{(iv)}.  For each $i$,
\[
    (a_i^\top Xa_i)^2
    =
    \ip{Xa_i}{a_i}^2
    \le
    \norm{Xa_i}^2\norm{a_i}^2
    =
    (a_i^\top X^2a_i)\norm{a_i}^2
    \stackrel{\eqref{eq:F2}}{\leq}
    (1-\eta)(a_i^\top X^2a_i),
\]
by Cauchy--Schwarz. Hence
\[
    \ip{X}{\Gamma(X)}
    \stackrel{\eqref{eq:F3}}{=} \sum_i(a_i^\top Xa_i)^2
    \le
    (1-\eta)\sum_i a_i^\top X^2a_i
    =
    (1-\eta)\bigg\langle\sum_i a_i a_i^\top, X^2 \bigg \rangle
    \stackrel{\eqref{eq:F1}}{\leq}
    (1-\eta)\fro{X}^2.
\]
Thus
\[
    \ip{X}{(I-\Gamma)X}\ge\eta\fro{X}^2,
\]
so $I-\Gamma\succ0$.
\end{proof}

\subsection{Optimality conditions}\label{sec:optimalityconditions}
Define the following ``stress'' matrix:\footnote{
We use the term \emph{stress} because, in the special case \(L=\Delta^*\Delta\), the matrix \(S\) can be interpreted as a stress matrix in the sense of rigidity theory~\citep{connelly2005generic,gortlerHealyThurston2010GlobalRigidity,gortlerThurston2014UniversalRigidity}. In that setting, \(S\) is the graph Laplacian whose edge \(\{i,j\}\) has weight $\frac12\|z_i^\star-z_j^\star\|^2-\frac12\|z_i-z_j\|^2.$
}
\begin{align}\label{eq:S}\tag{def-$S$}
    S:=L(Y_\star-Y).
\end{align}
The first- and second-order criticality conditions for~\eqref{eq:P} can be expressed in terms of \(S\) as follows.

We say that $Z$ is a \emph{first-order critical point} for~\eqref{eq:P} if
\begin{align}\label{eq:1-criticality}
    SZ=0.
    \tag{1-criticality}
\end{align}
Likewise, $Z$ is a \emph{second-order critical point} if it is first-order critical and additionally satisfies
\begin{align}\label{eq:2-criticality}
    0\le
    -\ip{\dot Z\dot Z^\top}{S}
    +
    \frac12\ip{\dot Y}{L\dot Y},
    \qquad
    \dot Y=Z\dot Z^\top+\dot Z Z^\top,
    \tag{2-criticality}
\end{align}
for all perturbations $\dot Z$.
These conditions are obtained by setting the gradient of the objective in~\eqref{eq:P} to zero and requiring the Hessian to be positive semidefinite.

Let us record two general facts about problem~\eqref{eq:P}. Both hold for an arbitrary positive semidefinite operator $L \colon \Sym(\cU)\to\Sym(\cU)$. Together, they show that second-order critical points $Z$ are already globally optimal in two important regimes: when the dimension $k$ is sufficiently large, and when $Z$ is rank deficient.

\paragraph{The convex regime.} 
The first regime is when the factorization dimension is sufficiently large. In this case, the rank constraint becomes vacuous and the problem reduces to a convex optimization problem.  Indeed,~\eqref{eq:P} admits the equivalent formulation
\begin{align}\label{eq:downstairs}
  \min_Y \,
  f(Y;Y_\star)
  :=
  \ip{Y-Y_\star}{L(Y-Y_\star)}
  \qquad
  \text{subject to}
  \qquad
  Y\in\PSD_{\le k}(n)\cap\Sym(\cU).
\end{align}
Since $L\succeq0$, the objective $f(\cdot;Y_\star)$ is convex.

Moreover, every element of $\Sym(\cU)$ has rank at most $p=\dim(\cU).$
Consequently, if $k\ge p$, then
\[
\PSD_{\le k}(n)\cap\Sym(\cU)
=
\PSD(n)\cap\Sym(\cU),
\]
and the rank constraint disappears. The feasible set is therefore convex, and hence~\eqref{eq:downstairs} is a convex optimization problem.

Using standard arguments (see, e.g.,~\citealp[Prop.~2.7,~3.32]{levin2022lifts}), every second-order critical point of the factorized problem~\eqref{eq:P} maps to a stationary point of~\eqref{eq:downstairs}. Since every stationary point of a convex optimization problem is globally optimal, it follows that every second-order critical point of~\eqref{eq:P} is a global minimizer whenever $k\ge p$.  
In summary:

\begin{lemma}\label{lem:kgeqp}
Assume $k\ge p$. If $Z \in \mathbb{R}^{n \times k}$ is second-order critical for~\eqref{eq:P}, then $Z$ is a global minimizer of~\eqref{eq:P}.
\end{lemma}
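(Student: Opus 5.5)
The plan is to verify directly that a second-order critical $Z$ satisfies the KKT conditions of the convex problem~\eqref{eq:downstairs}. When $k\ge p$ that problem is simply $\min f(Y;Y_\star)$ over the convex cone $\PSD(n)\cap\Sym(\cU)$. By the symmetry~\eqref{eq:symmetry} we may assume $Y_\star\in\Sym(\cU)$ and that the columns of $Z$ lie in $\cU$. The gradient of $f$ at $Y=ZZ^\top$ is $2L(Y-Y_\star)=-2S\in\Sym(\cU)$. For a convex function on a convex cone, a feasible $Y$ is a global minimizer if and only if
\[
    \ip{\nabla f(Y)}{W-Y}\ge 0\quad\text{for all feasible } W.
\]
Since the feasible set is a cone containing $Y$, this is equivalent to two conditions:
\begin{itemize}
    \item $\ip{S}{Y}=0$;
    \item $\ip{-S}{W}\ge0$ for all $W\in\PSD(n)\cap\Sym(\cU)$, i.e., $-S\succeq0$ as an element of $\Sym(\cU)$.
\end{itemize}
Because $L$ is self-adjoint with range in $\Sym(\cU)$, $S$ is symmetric and lies in $\Sym(\cU)$. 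The goal is therefore to prove $SZ=0$ (which gives $\ip{S}{Y}=0$) and $S\preceq0$ on $\cU$.

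The first condition is exactly~\eqref{eq:1-criticality}. For the second, I use~\eqref{eq:2-criticality} with perturbations of the form $\dot Z=u v^\top$, where $u\in\cU$ and $v\in\R^k$ is a unit vector chosen so that $Zv=0$. Such a $v$ exists whenever $\rank Z<k$. In that case $\dot Y=Z\dot Z^\top+\dot ZZ^\top = Zvu^\top+uv^\top Z^\top=0$, so the second-order condition reduces to $0\le -\ip{uu^\top}{S}=-u^\top Su$. This holds for every $u\in\cU$, hence $S\preceq0$ on $\cU$, and $Y$ is globally optimal. Since $g(Z;Z_\star)=f(ZZ^\top;Y_\star)$ and every feasible $Y$ factors as $Y=Z'Z'^\top$ with $Z'\in\R^{n\times k}$ (because $k\ge p\ge\rank Y$), $Z$ is then a global minimizer of~\eqref{eq:P}.

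The main obstacle is ensuring that $\rank Z<k$, i.e., that a kernel direction $v$ exists. Since the columns of $Z$ lie in $\cU$, we have $\rank Z\le p\le k$. Strict inequality can fail only in the boundary case $\rank Z=p=k$.

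In that case I argue differently. Since $\im Z=\cU$, first-order criticality $SZ=0$ forces $S$ to vanish on $\cU$. Because $S\in\Sym(\cU)$, this gives $S=0$. Then $L(Y-Y_\star)=0$, and positive definiteness of $L$ on $\Sym(\cU)$ yields $Y=Y_\star$, so the objective is zero and $Z$ is global. (If $L$ is only positive semidefinite, $S=0$ still means the gradient of the convex $f$ vanishes, which again gives global optimality.)

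Alternatively, one could cite the general lifting results of~\citet{levin2022lifts}, but the direct argument above is short and self-contained.
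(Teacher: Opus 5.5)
Your proof is correct, but it takes a more elementary route than the paper. The paper does not verify optimality conditions by hand. It rewrites \eqref{eq:P} as the convex problem \eqref{eq:downstairs} and notes that the rank constraint is vacuous when $k\ge p$. It then cites the lifting results of \citet{levin2022lifts} to conclude that second-order critical points of the factorized problem map to stationary points of the convex problem.

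You instead write down the explicit optimality certificate for the cone $\PSD(n)\cap\Sym(\cU)$, namely $\ip{S}{Y}=0$ and $S\preceq 0$, and produce it directly. First-order criticality gives $SZ=0$. The kernel perturbations $\dot Z=uv^\top$ with $Zv=0$ satisfy $\dot Y=0$, so second-order criticality gives $S\preceq 0$. This is the standard argument behind Lemma~\ref{lem:rkdeficient}.

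Your case split also makes explicit something the paper's presentation of two separate ``regimes'' leaves implicit. After the symmetry reduction, $\rank Z\le p$, so for $k>p$ the lemma is literally an instance of the rank-deficient case. Only the boundary case $\rank Z=k=p$ needs anything extra, and there $\im Z=\cU$ means first-order criticality alone forces $S=0$.

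The paper's citation buys brevity and places the lemma in a general lifting framework. Your argument buys self-containedness and an explicit dual certificate. Both work for any positive semidefinite $L$.

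One small point about your last step. The direction you actually need is the trivial one: every $Z'\in\R^{n\times k}$ yields a feasible point $P_{\cU}Z'Z'^\top P_{\cU}$ at which $f$ equals $g(Z';Z_\star)$, using $Y_\star\in\Sym(\cU)$ and the fact that $L$ only sees the $\cU$-block. The statement that every feasible $Y$ factors through $\R^{n\times k}$ is true, but it only shows the two problems have the same optimal value.
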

\noindent For a different proof of Lemma~\ref{lem:kgeqp} in the EDG setting, see~\citep{song2024localupdated}.

\paragraph{Rank-deficient configurations.} 
The second regime is when $Z$ fails to have full
column rank. 

\begin{lemma}\label{lem:rkdeficient}
If $Z \in \mathbb{R}^{n \times k}$ is second-order critical for~\eqref{eq:P} and $\rank(Z)<k$, then $Z$ is a global minimizer of~\eqref{eq:P}.
\end{lemma}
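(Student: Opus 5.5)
Since $\rank(Z)<k$, there is a unit vector $u\in\R^k$ with $Zu=0$. I will test the second-order condition~\eqref{eq:2-criticality} with the perturbation $\dot Z=xu^\top$ for an arbitrary $x\in\R^n$. Then $Z\dot Z^\top=Zux^\top=0$, so $\dot Y=0$ and the curvature term $\frac12\ip{\dot Y}{L\dot Y}$ vanishes. Condition~\eqref{eq:2-criticality} therefore reads $0\le-\ip{xx^\top}{S}=-x^\top Sx$ for every $x$, i.e., $S\preceq0$.

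I then want to convert $S\preceq 0$ together with $SZ=0$ into global optimality. The cleanest route is to pass to the convex problem~\eqref{eq:downstairs} with the rank constraint dropped: minimize the convex function $f(\cdot;Y_\star)$ over $\PSD(n)\cap\Sym(\cU)$. The gradient of $f$ at $Y$ is $2L(Y-Y_\star)=-2S$. For any feasible $W$, convexity gives
\[
    f(W)\ge f(Y)+\ip{-2S}{W-Y}=f(Y)-2\ip{S}{W}+2\ip{S}{ZZ^\top}.
\]
Here $\ip{S}{ZZ^\top}=\tr(Z^\top SZ)=0$ by~\eqref{eq:1-criticality}, and $-2\ip{S}{W}\ge0$ because $S\preceq0$ and $W\succeq0$. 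Hence $f(W)\ge f(Y)$ for every PSD $W\in\Sym(\cU)$, in particular for every $W=Z'Z'^\top$. By~\eqref{eq:symmetry}, $g(Z';Z_\star)=f(P_{\cU}Z'Z'^\top P_{\cU};Y_\star)$, which is a feasible value, so $Z$ globally minimizes~\eqref{eq:P}. This argument uses only $L\succeq0$, consistent with the remark before Lemma~\ref{lem:kgeqp}.

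The main subtlety is bookkeeping with $\cU$. The reduction at the start of the section allows us to assume $Y_\star\in\Sym(\cU)$ and that the columns of $Z$ lie in $\cU$. Then $Y\in\Sym(\cU)$ and $S\in\Sym(\cU)$, so the gradient computation and the inner products are legitimate, and $S\preceq0$ as a matrix on $\R^n$ is exactly what we derived. If one prefers not to reduce, the same argument works after replacing $Z$ by $P_{\cU}Z$. That replacement preserves rank deficiency, and it preserves second-order criticality by~\eqref{eq:symmetry}. No other obstacle is expected; the key step is simply noticing that kernel directions $xu^\top$ kill the curvature term.
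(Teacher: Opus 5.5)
Your proof is correct, and it is the standard argument the paper defers to (\citet[Thm.~7]{journee2010low}; \citet[Lem.~3.4]{criscitiello2025snl}): kernel perturbations $\dot Z = xu^\top$ make $\dot Y=0$ and so force $S\preceq 0$, and together with $SZ=0$ this is the first-order optimality condition for the convex lifted problem over the PSD cone, which gives global optimality for any $L\succeq 0$. Your $\cU$ bookkeeping is also fine. It is in fact unnecessary, because the extended operator $X\mapsto L(P_{\cU}XP_{\cU})$ is already self-adjoint and positive semidefinite on all of $\Sym(n)$.
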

\noindent
This is a standard fact in low-rank optimization. For a proof, see~\citet[Thm.~7]{journee2010low} or~\citet[Lem.~3.4]{criscitiello2025snl}.

\subsection{The structured-inverse theorem}

Lemmas~\ref{lem:kgeqp} and~\ref{lem:rkdeficient} reduce the analysis to full-rank
second-order critical points with \(k<p\). 
Our central result is the following
theorem, whose proof is carried out in
Section~\ref{sec:proof-intermediate}.

\begin{theorem}[Full-rank 2-critical points]
\label{thm:intermediate}
Assume \(L\) satisfies~\eqref{eq:F1}--\eqref{eq:F3}, assume \(k<p\), and suppose
\(Y,Y_\star\in\Sym(\cU)\). If \(Z\) is a full-rank second-order critical point
of~\eqref{eq:P}, then either \(Z\) is globally optimal, or
\[
    k-\ell
    \le
    (1-\eta)\bigl(\rank(W^\top Y_\star W)+2\bigr),
\]
where the columns of \(W\in\R^{n\times(p-k)}\) form an orthonormal basis for
\(\im(Z)^\perp\cap\cU\).
\end{theorem}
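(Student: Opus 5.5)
The plan is to extract two competing trace inequalities from the stationarity structure and the kernel directions, and to compare them through the contraction properties of \(\Gamma\) in Lemma~\ref{lem:gamma-properties}.

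\textbf{Structure of \(S\) at a critical point.} Since \(S\in\Sym(\cU)\) and \(SZ=0\), we can write \(S=WMW^\top\) with \(M=W^\top SW\in\Sym(p-k)\). Combining \(L^{-1}=I-\Gamma\) with \eqref{eq:S} gives
\[
    Y_\star-Y=S-\Gamma(S).
\]
We compress this identity in two complementary ways.
\begin{enumerate}[label=(\alph*)]
    \item Compressing by \(W\), and using \(YW=0\), gives \(W^\top Y_\star W=M-W^\top\Gamma(S)W\).
    \item Compressing by \(Z\), and using \(SZ=0\), gives \(Z^\top\Gamma(S)Z=(Z^\top Z)^2-Z^\top Y_\star Z\).
\end{enumerate}
Because \(\rank Y_\star\le\ell\), there is a subspace \(V\subseteq\R^k\) of dimension at least \(k-\ell\) with \(Y_\star ZV=0\). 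On \(ZV\), identity (b) then says that \(\Gamma(S)\) is strictly positive, since \(Z\) has full rank. This positive mass can only come from atoms with \(a_i^\top Sa_i>0\), that is, from the positive part of \(S\), equivalently of \(M\).

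\textbf{Non-optimality forces \(M\) to have a positive eigenvalue.} If \(S\preceq0\), then together with \(SY=0\) this is exactly the KKT certificate for the convex lifted problem~\eqref{eq:downstairs} over \(\PSD(n)\cap\Sym(\cU)\). In that case \(Z\) is globally optimal, so we may assume \(\lammax(M)>0\).

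\textbf{Kernel directions and the ellipsoid containment.} We test \eqref{eq:2-criticality} with \(\dot Z=Wuv^\top\). This gives \(\dot Y=Zvu^\top W^\top+Wuv^\top Z^\top\), which has rank two. For the \(L\)-term we use the variational identity
\[
    \tfrac12\ip{\dot Y}{L\dot Y}=\sup_X\Bigl\{\ip{X}{\dot Y}-\tfrac12\ip{X}{(I-\Gamma)X}\Bigr\},
\]
so second-order criticality becomes the statement that, for every \(u,v\), the quadratic form \(u^\top Mu\,\norm{v}^2\) is dominated by a convex quadratic form in \((Zv,Wu)\). This is the ellipsoid containment. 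Its failure would be witnessed by a separating test matrix \(X\), and such an \(X\) would produce a descent direction.

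To exploit the containment, I would choose \(u\) to range over the positive eigenspace of \(M\) and \(v\) to range over \(V\), and average (take traces) over orthonormal bases of both. Applying Lemma~\ref{lem:gamma-properties}(ii)--(iii) to \(\Gamma(S_+)\) should give an inequality of the form
\[
    (k-\ell)\cdot(\text{positive mass of }S)\;\le\;(1-\eta)\cdot\bigl(\text{number of positive directions of }M\bigr)\cdot(\text{same scale}).
\]
Here the \(k-\ell\) on the left comes from \(\dim V\) together with identity (b), and the factor \(1-\eta\) on the right comes from the trace and spectral contraction of \(\Gamma\).

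\textbf{Counting positive directions of \(M\).} Identity (a) gives \(M=W^\top Y_\star W+W^\top\Gamma(S)W\). I would show that the positive directions of \(M\) not accounted for by \(W^\top Y_\star W\) number at most two, which is where the \(+2\) comes from. The intended argument is that the rank-two structure of \(\dot Y\) lets the contribution of \(\Gamma\) be absorbed, leaving \(\rank(W^\top Y_\star W)+2\) effective directions. Combining this count with the averaged inequality gives \(k-\ell\le(1-\eta)(\rank(W^\top Y_\star W)+2)\).

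\textbf{Main obstacle.} The hardest step is the averaging in the kernel-direction step: choosing the dual certificate \(X\), or equivalently the pairing of \(u\) and \(v\), so that the \(\Gamma\)-terms telescope against identity (b) cleanly. What I would try first is \(X\) proportional to \(\dot Y\) itself, corrected along \(\im Z\) by \(Y^{-1}\)-weighting, so that identity (b) enters through \(Z^\top\Gamma(S)Z\). I expect this step to require the most care.
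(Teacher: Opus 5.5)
There is a genuine gap. The two steps that carry the estimate are the averaged inequality and the count yielding $\rank(W^\top Y_\star W)+2$. Both are asserted rather than derived, and the mechanisms you sketch for them do not work.

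Write $\Omega:=W^\top SW$ for your $M$. Your test directions $\dot Z=Wuv^\top$ have zero $V,V$ block, and for them \eqref{eq:2-criticality} reads $\norm{v}^2u^\top\Omega u\le Q(\dot Y)=\frac12\ip{\dot Y}{L\dot Y}$, where $L=(I-\Gamma)^{-1}$ is not explicit. To extract anything you need an \emph{upper} bound on $Q(\dot Y)$. Inserting a particular $X$ into $\sup_X\{\ip{X}{\dot Y}-\frac12\ip{X}{(I-\Gamma)X}\}$ only gives a \emph{lower} bound. The generic bound $L\preceq\eta^{-1}I$ from Lemma~\ref{lem:gamma-properties}(iv) loses a factor $1/\eta$ ($=n$ for EDG), so this route cannot produce an inequality with $1-\eta$ on the right.

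The missing idea is to build the direction through $L^{-1}$. Take $\dot Y_\alpha=(I-\Gamma)(G_\alpha+W\beta_\alpha W^\top)$, with $G_\alpha=V\alpha W^\top+W\alpha^\top V^\top$ and $\beta_\alpha$ fixed by the tangency condition $W^\top\dot Y_\alpha W=0$; this is where $(I-\Pi_W)^{-1}$ enters. This $\dot Y_\alpha$ is the best response over $T_Y$, so its $V,V$ block is optimal rather than zero. It makes $Q(\dot Y_\alpha)=\ip{\alpha}{V^\top\dot Y_\alpha W}=\fro{\alpha}^2-2s_\alpha^\top(I-\Pi_W)^{-1}s_\alpha$ exact (Lemma~\ref{lem:best-response}), and together with Cauchy--Schwarz this gives Lemma~\ref{lem:exact-rank-one}.

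Separately, you never prove $S\succeq0$; your KKT reduction only disposes of $S\preceq0$. Without it you lack $\Gamma(S)\succeq0$, and hence both $W^\top Y_\star W\preceq\Omega$ and $\tr(V^\top\Gamma(S)V)\le(1-\eta)\tr(W^\top Y_\star W)$. These are exactly what the final count uses. Applying Lemma~\ref{lem:gamma-properties}(iii) to $-S$ at an eigenvector orthogonal to $\im Z$ excludes negative eigenvalues at every first-order critical point (Lemma~\ref{lem:stress-psd}).

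The accounting for $\rank(W^\top Y_\star W)+2$ is also not right. With $K:=W^\top Y_\star W$, the difference $\Omega-K=\sum_i(w_i^\top\Omega w_i)\,w_iw_i^\top$ is a generic nonnegative combination of the $w_iw_i^\top$. Nothing bounds the number of positive directions of $\Omega$ by $\rank K+2$, and the proof never needs such a bound. Even with the correct directions, averaging $\xi$ uniformly over an orthonormal basis of $\im\Omega$ produces the term $\tr(\Omega^\dagger)\tr K$, which no rank controls when $\Omega$ is ill-conditioned. Your ``same scale'' tacitly assumes $\Omega$ is a multiple of a projector.

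What works is to use probes $\alpha=u\xi^\top$ with $u\in\ker(V^\top Y_\star V)$, which is your $(k-\ell)$-dimensional subspace after $v\mapsto\Sigma^{1/2}v$. Then weight $\xi$ by the covariance $R=K$. With $d:=\dim\ker(V^\top Y_\star V)\ge k-\ell$, this gives $d\le(1-\eta)\bigl(\tr(\Omega^\dagger K)+2\bigr)$, and $0\preceq K\preceq\Omega$ gives $\tr(\Omega^\dagger K)\le\rank K$.

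The $+2$ is not a count of extra directions. It is the factor $2$ in $Q(\dot Y_\alpha)=\fro{\alpha}^2-2s_\alpha^\top h_\alpha$. After averaging, this correction term is at most $2(1-\eta)\tr R$, via $(I-\Pi_W)^{-1}\preceq\Diag\bigl(1/(1-\tau_i)\bigr)$ and $\norm{v_i}^2\le1-\tau_i-\eta$.
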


Once Theorem~\ref{thm:intermediate} is established, the following structured-inverse landscape result follows.

\begin{theorem}
\label{thm:main}
Assume \(L\) satisfies~\eqref{eq:F1}--\eqref{eq:F3}. If either \(k\ge p\), or \(k<p\)
and
\begin{align}\label{eq:assumedinequ}
    k-\ell > (1-\eta)(\min\{\ell,p-k\}+2),
\end{align}
then every second-order critical point of~\eqref{eq:P} is globally optimal.
In particular, the landscape of~\eqref{eq:P} is benign whenever $k\ge 2(\ell+1).$
\end{theorem}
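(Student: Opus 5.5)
The plan is to derive Theorem~\ref{thm:main} directly from Theorem~\ref{thm:intermediate} together with Lemmas~\ref{lem:kgeqp} and~\ref{lem:rkdeficient}, so that no new landscape analysis is needed. We proceed by cases on a second-order critical point \(Z\in\R^{n\times k}\) of~\eqref{eq:P}.

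\emph{Reduction.} By the symmetry~\eqref{eq:symmetry} and the discussion following it, we may assume \(Y_\star\in\Sym(\cU)\) with \(Y_\star\succeq0\) and \(\rank(Y_\star)\le\ell\). We may also assume that \(Z\) has columns in \(\cU\). Indeed, if \(Z\) is second-order critical, then so is \(P_{\cU}Z\) for \(g(\cdot;P_{\cU}Z_\star)\), with the same value, and the global minimum values agree. Hence global optimality of \(P_{\cU}Z\) transfers back to \(Z\).

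\emph{Easy cases.} If \(k\ge p\), Lemma~\ref{lem:kgeqp} gives global optimality. If \(\rank(Z)<k\), Lemma~\ref{lem:rkdeficient} does. It remains to treat full-rank \(Z\) with \(k<p\), for which~\eqref{eq:assumedinequ} holds by assumption.

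\emph{Main case.} Theorem~\ref{thm:intermediate} applies here. Suppose \(Z\) were not globally optimal. Then
\[
k-\ell\le(1-\eta)\bigl(\rank(W^\top Y_\star W)+2\bigr).
\]
Since \(W\in\R^{n\times(p-k)}\), we have \(\rank(W^\top Y_\star W)\le\min\{\rank(Y_\star),p-k\}\le\min\{\ell,p-k\}\). This gives \(k-\ell\le(1-\eta)(\min\{\ell,p-k\}+2)\), contradicting~\eqref{eq:assumedinequ}. Hence \(Z\) is globally optimal.

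\emph{The ``in particular'' clause.} Let \(k\ge2(\ell+1)\). If \(k\ge p\), we are done. Otherwise, since \(\eta>0\),
\[
(1-\eta)(\min\{\ell,p-k\}+2)\le(1-\eta)(\ell+2)<\ell+2\le k-\ell,
\]
where the last inequality is exactly \(k\ge2\ell+2\). Thus~\eqref{eq:assumedinequ} holds, and the landscape is benign. For EDG, Lemma~\ref{lem:gamma-properties} is not even needed; we only use that~\eqref{eq:EDGinverseframeconditions} satisfies~\eqref{eq:F1}--\eqref{eq:F3} with \(\eta=1/n>0\).

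The only mildly delicate step is the reduction to \(Y\in\Sym(\cU)\): we must check that "globally optimal" is preserved under \(Z\mapsto P_{\cU}Z\). This holds because \(g(Z;Z_\star)=g(P_{\cU}Z;P_{\cU}Z_\star)\) for every \(Z\), so both problems have the same infimum. Everything else is bookkeeping, since the substantive work is deferred to Theorem~\ref{thm:intermediate}.
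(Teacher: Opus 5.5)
Your proposal is correct and follows the paper's proof essentially verbatim: reduce via the symmetry \eqref{eq:symmetry}, dispatch $k\ge p$ and rank-deficient $Z$ with Lemmas~\ref{lem:kgeqp} and~\ref{lem:rkdeficient}, then contradict \eqref{eq:assumedinequ} using Theorem~\ref{thm:intermediate} and $\rank(W^\top Y_\star W)\le\min\{\ell,p-k\}$. You perform the projection $Z\mapsto P_{\cU}Z$ before the rank check, whereas the paper does it after; your order is, if anything, slightly cleaner, because projecting can lower the rank of $Z$.
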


\begin{proof}
If \(k\ge p\), the claim follows from Lemma~\ref{lem:kgeqp}. Thus assume \(k<p\),
and let \(Z\) be a second-order critical point. If \(\rank(Z)<k\), then
Lemma~\ref{lem:rkdeficient} implies that \(Z\) is globally optimal. Hence we may
assume that \(Z\) has full rank. By the symmetry reduction~\eqref{eq:symmetry}, we
may also assume that $Y = ZZ^\top \in \Sym(\cU)$ and $Y_\star = Z_\star^{} Z_\star^\top \in \Sym(\cU)$.

Suppose for contradiction that \(Z\) is not globally optimal. 
Then
Theorem~\ref{thm:intermediate} yields
\[
    k-\ell \le (1-\eta)(\rank(W^\top Y_\star W)+2).
\]
On the other hand,
\[
    \rank(W^\top Y_\star W)
    \le
    \min\{\rank(Y_\star),\,p-k\}
    \le
    \min\{\ell,\,p-k\}.
\]
Therefore
\[
    k-\ell \le (1-\eta)(\min\{\ell,\,p-k\}+2),
\]
contradicting the assumed inequality~\eqref{eq:assumedinequ}. So every second-order critical point
must be optimal.

Finally, if \(k\ge 2(\ell+1)\), then either \(k\ge p\), or else \(k<p\) and
\[
    k-\ell \ge \ell+2 \ge \min\{\ell,p-k\}+2
    > (1-\eta)(\min\{\ell,p-k\}+2),
\]
where the strict inequality uses \(\eta>0\). Thus the theorem applies.
\end{proof}

Theorem~\ref{thm:intro-informal} is now immediate. Indeed, for complete-graph EDG, \(L = \Delta^* \Delta\) satisfies
the structured-inverse hypotheses~\eqref{eq:F1}--\eqref{eq:F3}, see~\eqref{eq:EDGinverseframeconditions}.

\begin{remark}[Least-squares residual]\label{rem:lsresidual}
The quantity \(W^\top Y_\star W\) appearing in Theorem~\ref{thm:intermediate} has a simple geometric meaning. Define the ``best least-squares linear transformation''

\[
    R_{\rm ls}
    :=
    {\arg\min}_{R\in\R^{k\times \ell}}\|Z_\star-Z R\|_{\mathrm F}^2
    =
    (Z^\top Z)^{-1} Z^\top Z_\star,
\]
and define the \emph{least-squares residual}
\[
    Z_{\rm ls}:=Z_\star-ZR_{\rm ls} = (I - Z (Z^\top Z)^{-1} Z^\top) Z_\star = W W^\top Z_\star.
\]
Therefore,
\[
    Z_{\rm ls}^{} Z_{\rm ls}^\top
    =
    WW^\top Y_\star WW^\top,
    \qquad
    W^\top Y_\star W
    =
    (W^\top Z_{\rm ls})(W^\top Z_{\rm ls})^\top .
\]
Thus \(\rank(W^\top Y_\star W)=\rank(Z_{\rm ls})\): loosely speaking, it measures the dimension of the component
of the ground truth that cannot be explained by the best linear alignment of \(Z\) to
\(Z_\star\). This residual viewpoint also appears in the geometric interpretation of the
descent directions in~\citep[App.~D]{criscitiello2025snl}.
\end{remark}

\begin{remark}[Identity sensing]\label{rem:idsensing}
Theorem~\ref{thm:main}
includes identity sensing \(L=I\) as a degenerate special case. Indeed, take
\(N=0\), so that \(\Gamma=0\), and hence $L^{-1}=I-\Gamma=I.$
Condition~\eqref{eq:F1} is then vacuous, while~\eqref{eq:F2}--\eqref{eq:F3}
hold with \(\eta=1\). So Theorem~\ref{thm:main} implies that the
landscape of~\eqref{eq:P} is benign whenever \(k>\ell\).
However, this guarantee is loose.  In the identity-sensing setting, it is known that
the landscape is already benign when \(k=\ell\), for arbitrary ground truths~\citep{ge2017nospurious}. Possible
approaches for closing this gap are suggested in Section~\ref{sec:meq1}.

More generally, it remains open whether condition~\eqref{eq:assumedinequ} in Theorem~\ref{thm:main} is tight over the full family of operators satisfying \eqref{eq:F1}--\eqref{eq:F3}. 
Section~\ref{sec:meq1} shows the condition is not tight when $m = p - k = 1$.
For the operator \(L=\Delta^*\Delta\), we do not expect the condition to be tight.
\end{remark}

\section{Proof of Theorem~\ref{thm:intermediate}}\label{sec:proof-intermediate}

This section is devoted to proving Theorem~\ref{thm:intermediate}.
Throughout, we therefore assume that \(L\) satisfies~\eqref{eq:F1}--\eqref{eq:F3},
that \(k<p\), that \(Y,Y_\star\in\Sym(\cU)\), and that \(Z\) has full column rank.

Sections~\ref{sec:first-order} to~\ref{sec:second-order} develop the main ingredients of the proof:
consequences of first-order criticality, the construction of descent
directions, and consequences of second-order criticality. The end of
Section~\ref{sec:second-order} combines these ingredients to prove
Theorem~\ref{thm:intermediate}.

\subsection{Consequences of first-order criticality}
\label{sec:first-order}

This section develops the consequences of first-order criticality.
Our arguments closely follow those of~\citep[\S4.1,~\S4.3]{criscitiello2025snl}. In particular, Lemmas~\ref{lem:stress-psd} and~\ref{lem:first-order-blocks} below extend~\citep[Lems.~4.2,~4.4]{criscitiello2025snl} to our structured-inverse setting~\eqref{eq:F1}--\eqref{eq:F3}.

Since \(Z\) has full rank, we may write
\begin{align}\label{eq:defVSigma}\tag{def-$V, \Sigma$}
    Z=V\Sigma^{1/2},
    \qquad
    V^\top V=I_k,
    \qquad
    \Sigma\succ0.
\end{align}
Let \(m:=p-k\), and choose \(W\in\R^{n\times m}\) so that \([V,W]\) is an orthonormal basis of \(\cU\), i.e.,
\begin{align}\label{eq:VVWW}\tag{def-$W$}
W^\top W = I_m \qquad \text{and} \qquad VV^\top + WW^\top = P_\cU.
\end{align}

Using~\eqref{eq:F3}, the definition of \(S\) in~\eqref{eq:S} can be rearranged as
\begin{align}\label{eq:exposingYstar}\tag{expose-$Y_\star$}
    Y_\star
    =
    Y+L^{-1}S
    =
    Y+S-\Gamma(S).
\end{align}
This identity is useful because it exposes the ground truth \(Y_\star\), allowing us to exploit the fact that $Y_\star \succeq 0$ and $\rank(Y_\star)\le \ell.$

\paragraph{Positive-semidefinite stress.} 
The first consequence of~\eqref{eq:exposingYstar} is that the stress matrix $S$ must be positive semidefinite.
This is useful because the first term in~\eqref{eq:2-criticality} becomes nonpositive, which opens the door to constructing descent directions.

\begin{lemma}
\label{lem:stress-psd}
At every first-order critical point $Z$, $S\succeq0.$
Consequently, at a full-rank first-order critical point,
\[
    S = W\Omega W^\top
    \qquad\text{for some}\qquad
    \Omega \in \mathrm{PSD}(m).
\]
\end{lemma}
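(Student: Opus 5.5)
The plan is to argue by contradiction using the identity \eqref{eq:exposingYstar}, $Y_\star=Y+S-\Gamma(S)$. We test it against an eigenvector of $S$ for its most negative eigenvalue, and use Lemma~\ref{lem:gamma-properties}\emph{(iii)} to control the $\Gamma$ term. Positive semidefiniteness of $Y_\star$ then gives the contradiction.

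\textbf{Setting up the eigenvector.} Note $S=L(Y_\star-Y)\in\Sym(\cU)$ by construction of $L$. Suppose $\lambda_0:=-\lammin(S)>0$, and let $u$ be a unit eigenvector of $S$ with $Su=-\lambda_0u$. Since $\lambda_0\neq 0$, $u\in\im(S)\subseteq\cU$. First-order criticality $SZ=0$ gives $SY=SZZ^\top=0$, so $\im(Y)\subseteq\ker(S)=\im(S)^\perp$. Hence $u\perp\im(Y)$ and $u^\top Yu=0$.

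\textbf{Bounding the $\Gamma$ term.} Apply Lemma~\ref{lem:gamma-properties}\emph{(iii)} to $X=-S\in\Sym(\cU)$, which satisfies $\lammax(-S)=\lambda_0\ge0$. This yields $\lammax(\Gamma(-S))\le(1-\eta)\lambda_0$, and by linearity of $\Gamma$,
\[
-u^\top\Gamma(S)u=u^\top\Gamma(-S)u\le(1-\eta)\lambda_0 .
\]

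\textbf{Contradiction.} Testing \eqref{eq:exposingYstar} against $u$ gives
\[
u^\top Y_\star u = 0-\lambda_0-u^\top\Gamma(S)u\le -\lambda_0+(1-\eta)\lambda_0=-\eta\lambda_0<0,
\]
using $\eta>0$. This contradicts $Y_\star\succeq0$, so $S\succeq0$.

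\textbf{Block form.} At a full-rank critical point, $SZ=SV\Sigma^{1/2}=0$ with $\Sigma\succ0$ gives $SV=0$. Since $S=P_\cU SP_\cU$ and $P_\cU=VV^\top+WW^\top$ by \eqref{eq:VVWW}, we get $S=WW^\top SWW^\top=W\Omega W^\top$ with $\Omega:=W^\top SW\succeq0$.

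The only delicate step is the bound on the $\Gamma$ term. It works because \emph{(iii)} is applied to $-S$ rather than $S$, and that application is legitimate precisely because we assumed $S$ has a negative eigenvalue.
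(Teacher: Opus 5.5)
Your proof is correct and is essentially the paper's argument: you test \eqref{eq:exposingYstar} against a unit eigenvector for the most negative eigenvalue of $S$, use $SZ=0$ to kill the $Y$ term, and apply Lemma~\ref{lem:gamma-properties}\emph{(iii)} to $-S$ to reach $u^\top Y_\star u\le-\eta\lambda_0<0$. For the block form, your projector computation $S=WW^\top SWW^\top$ just spells out the paper's observation that $S\succeq0$ and $SV=0$ force $\im S\subseteq\im W$.
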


\begin{proof}
For contradiction, suppose $S$ has a negative eigenvalue. Let
\[
    \gamma:=\lammax(-S)>0,
\]
and choose a unit eigenvector $x\in\cU$ such that $(-S)x=\gamma x$. Since $SZ=0$ by~\eqref{eq:1-criticality}, and since $x$ belongs
to a nonzero eigenspace of $S$, $x$ is orthogonal to $\im(Z)$. Hence, $Yx=ZZ^\top x=0$.
Appealing to~\eqref{eq:exposingYstar}, we obtain
\begin{align}\label{eq:tobecontradicted}
0\le x^\top Y_\star x
    =
    x^\top (Y+S-\Gamma(S)) x
    =
    -\gamma+x^\top\Gamma(-S)x
    \leq
    -\gamma+\lammax(\Gamma(-S)).
\end{align}
On the other hand, Lemma~\ref{lem:gamma-properties} \emph{(iii)} implies
\[
    \lammax(\Gamma(-S))
    \leq (1-\eta)\lammax(-S)
    <\gamma,
\]
which contradicts~\eqref{eq:tobecontradicted}. Therefore $S\succeq0$.

Since $S\succeq0$ and $SV=0$, the image of $S$ is contained in $\im(W)$. Hence
$S=W\Omega W^\top$ for some $\Omega\succeq0$.
\end{proof}

\paragraph{Block identities and a trace bound.} 
For each frame atom $a_i \in \cU$, define
\begin{align}\label{eq:defviwi}
    v_i:=V^\top a_i\in\R^k,
    \qquad
    w_i:=W^\top a_i\in\R^m,
    \qquad
    \tau_i:=\norm{w_i}^2.
    \tag{def-$v_i, w_i, \tau_i$}
\end{align}
When $L = \Delta^* \Delta$, $v_i$ and $w_i$ are the rows of $V$ and $W$, respectively.
By~\eqref{eq:F1},~\eqref{eq:F2} and~\eqref{eq:VVWW},
\begin{align}
    \sum_i v_i^{} v_i^\top &\preceq I_k,
    \qquad
    \sum_i w_i^{} w_i^\top\preceq I_m, \label{eq:sumvvtwwt} \\
    \norm{v_i}^2&+\norm{w_i}^2=\norm{a_i}^2\le1-\eta. \label{eq:viwinorm}
\end{align}

Now suppose \(Z\) is first-order critical. Then \(SV=0\) by~\eqref{eq:1-criticality}, and Lemma~\ref{lem:stress-psd} yields $S=W\Omega W^\top.$
Taking the $V,V$-block of~\eqref{eq:exposingYstar} gives
\begin{align}\label{eq:M}
    M:=V^\top Y_\star V
    = \Sigma-P
    \tag{$V, V$-block}
\end{align}
where we define
\begin{align}\label{eq:defP}\tag{def-$P$}
    P:=V^\top\Gamma(S)V=\sum_i d_i\,v_i^{} v_i^\top, \qquad d_i:=w_i^\top\Omega w_i.
\end{align}
Taking the $W,W$-block of~\eqref{eq:exposingYstar} gives
\begin{align}\label{eq:K}
    K:=W^\top Y_\star W
    = \Omega - W^\top \Gamma(S) W
    = \Omega-\sum_i d_i\,w_i^{} w_i^\top,
    \tag{$W,W$-block}
\end{align}
The matrices $M, P, K$---defined via~\eqref{eq:M},~\eqref{eq:defP},~\eqref{eq:K}---play a central role in the analysis that follows.

\begin{lemma}
\label{lem:first-order-blocks}
At every full-rank first-order critical point $Z$,
\[
    0\preceq K\preceq\Omega, \qquad \text{and} \qquad \tr P\le(1-\eta)\tr K.
\]
\end{lemma}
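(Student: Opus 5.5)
Three facts need to be verified: $K\succeq0$, $K\preceq\Omega$, and $\tr P\le(1-\eta)\tr K$. All three follow from the block identities \eqref{eq:M}--\eqref{eq:K}, from $\Omega\succeq0$ (Lemma~\ref{lem:stress-psd}), and from the frame bounds \eqref{eq:sumvvtwwt}--\eqref{eq:viwinorm}.

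\textbf{Semidefinite bounds.} First, $K=W^\top Y_\star W\succeq0$ because $Y_\star\succeq0$. Second, since $\Omega\succeq0$, each weight $d_i=w_i^\top\Omega w_i$ is nonnegative. Hence $\sum_i d_i w_i w_i^\top\succeq0$, and \eqref{eq:K} gives $K\preceq\Omega$. Equivalently, $S\succeq0$ implies $\Gamma(S)\succeq0$ by Lemma~\ref{lem:gamma-properties}\emph{(i)}, and we compress to the $W,W$-block.

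\textbf{Trace bound.} The first step is to split $\Omega$ as the sum of $K$ and a correction. By \eqref{eq:K}, $\Omega=K+\sum_j d_j w_jw_j^\top$, so
\[
d_i=w_i^\top K w_i+\sum_j d_j (w_i^\top w_j)^2 .
\]
Summing against the weights $\|v_i\|^2$, I would write
\[
\tr P=\sum_i d_i\|v_i\|^2,\qquad \tr\Omega-\tr K=\sum_i d_i\tau_i ,
\]
and hence
\[
\tr P+\sum_i d_i\tau_i=\sum_i d_i\|a_i\|^2\le(1-\eta)\sum_i d_i
\]
by \eqref{eq:viwinorm}. Next, $\sum_i d_i=\ip{\Omega}{\sum_i w_iw_i^\top}\le\tr\Omega=\tr K+\sum_i d_i\tau_i$, using \eqref{eq:sumvvtwwt} together with $\Omega\succeq0$. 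Combining the two inequalities gives
\[
\tr P\le(1-\eta)\tr K-\eta\sum_i d_i\tau_i\le(1-\eta)\tr K,
\]
since $d_i\ge0$.

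\textbf{Expected difficulty.} The main point is not to bound $\tr P$ by $\tr\Omega$ directly. A direct bound would be too weak, because $\tr\Omega$ exceeds $\tr K$. The fix is to recognize that the excess $\tr\Omega-\tr K$ is exactly $\sum_i d_i\tau_i$. This excess is absorbed by the $\|w_i\|^2$ part of $\|a_i\|^2$, so it cancels cleanly. Everything else in the argument is bookkeeping with the frame inequalities.
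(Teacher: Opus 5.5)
Your proof is correct and is essentially the paper's argument. The paper writes $\tr P=\tr\Gamma(S)-\tr\Omega+\tr K$ and cites the trace contraction of Lemma~\ref{lem:gamma-properties}~\emph{(ii)}, whereas you unroll that same contraction in atom coordinates via $\tr\Gamma(S)=\sum_i d_i\norm{a_i}^2=\sum_i d_i(\norm{v_i}^2+\tau_i)$. Both routes reach the identical intermediate bound $\tr P\le\tr K-\eta\tr\Omega=(1-\eta)\tr K-\eta\sum_i d_i\tau_i$.
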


\begin{proof}
Since $Y_\star\succeq0$, $K = W^\top Y_\star W \succeq 0$. By Lemma~\ref{lem:gamma-properties} \emph{(i)} and Lemma~\ref{lem:stress-psd}, $W^\top \Gamma(S) W \succeq 0$.  Therefore, $K\preceq\Omega$ by~\eqref{eq:K}.

For the trace bound, using \(VV^\top+WW^\top=P_{\cU}\) and~\eqref{eq:K},
\[
    \tr P
    =
    \tr\Gamma(S)-\tr(W^\top\Gamma(S)W)
    =
    \tr\Gamma(S)-\tr\Omega+\tr K.
\]
By Lemma~\ref{lem:gamma-properties}~\emph{(ii)},
\[
    \tr\Gamma(S)
    \le
    (1-\eta)\tr S
    =
    (1-\eta)\tr\Omega.
\]
Therefore
\[
    \tr P
    \le
    \tr K-\eta\tr\Omega
    \le
    (1-\eta)\tr K,
\]
where the final inequality uses \(K\preceq\Omega\).
\end{proof}

\subsection{Descent direction framework}
\label{sec:directions}

We now explain the framework for choosing descent directions used later in the proof. The construction is most
naturally phrased at the level of Gram matrices.

Recall~\eqref{eq:defVSigma} and~\eqref{eq:VVWW}.
The tangent space to the manifold of rank-\(k\) positive semidefinite matrices at \(Y\),
restricted to \(\Sym(\cU)\), is
\[
    T_Y
    =
    \left\{
    VA V^\top+VBW^\top+WB^\top V^\top:
    A \in \Sym(k),\ B\in\R^{k\times m}
    \right\},
    \qquad
    m=p-k.
\]
Equivalently, \(T_Y\) is the set of first-order Gram perturbations
\begin{align}\label{eq:factorperturbation}
    \dot Y=Z\dot Z^\top+\dot Z Z^\top.
\end{align}
Indeed, if
\[
    \dot Y=VA V^\top+VBW^\top+WB^\top V^\top,
    \qquad A=A^\top,
\]
then it is realized by the ``factor perturbation''
\[
    \dot Z=
    \left(\frac12VA+WB^\top\right)\Sigma^{-1/2}.
\]

Define the \emph{measurement energy}
\begin{align}\tag{def-$Q$}
    Q(\dot Y):=\frac12\ip{\dot Y}{L\dot Y},
    \qquad
    \dot Y\in T_Y.
\end{align}
This is a positive definite quadratic form on \(T_Y\), and it is the second term in
the second-order criticality condition~\eqref{eq:2-criticality}.

Rather than choosing \(\dot Y\) directly, we choose a ``dual probe''
\(\alpha\in\R^{k\times m}\), and then let the measurement geometry choose the best
tangent vector $\dot Y_\alpha$ exposed by that probe. Concretely, given \(\alpha\in\R^{k\times m}\),
define the tangent vector
\begin{align}\label{eq:defGalpha}
    G_\alpha:=V\alpha W^\top+W\alpha^\top V^\top \in T_Y,
    \tag{def-$G_\alpha$}
\end{align}
and define the candidate descent direction \(\dot Y_\alpha\in T_Y\) by the \emph{variational problem}
\begin{align}\label{eq:dotYalpha}
    \dot Y_\alpha
    :=
    \argmax_{\dot Y\in T_Y}
    \left\{
        \ip{G_\alpha}{\dot Y}
        -
        Q(\dot Y)
    \right\}.
    \tag{def-$\dot Y_\alpha$}
\end{align}
Equivalently, \(\dot Y_\alpha\) realizes the Fenchel conjugate of \(Q\) at the linear functional \(\dot Y\mapsto \langle G_\alpha, \dot Y \rangle \):
\[
    Q^\ast(G_\alpha)
    :=
    \sup_{\dot Y\in T_Y}
    \left\{
        \ip{G_\alpha}{\dot Y}
        -
        Q(\dot Y)
    \right\}
    =
    Q(\dot Y_\alpha).
\]
Thus \(G_\alpha\) should be viewed as a hyperplane normal, or cotangent vector, while
\(\dot Y_\alpha\) is the tangent vector exposed by that hyperplane. We return to this
perspective below.

\begin{remark}[$\dot Y_\alpha$ has the optimal choice of \(V,V\) block]\label{rem:optimalVV}
There is another way to understand~\eqref{eq:dotYalpha}: the probe \(\alpha\) first selects
a cross-block \(B=B_\alpha\). Given this choice of \(B\), the \(V,V\) block \(A\) is then
chosen optimally, namely as the minimizer of the right-hand side of~\eqref{eq:2-criticality}.
Indeed, writing \(\dot Y=VAV^\top+VBW^\top+WB^\top V^\top\), $\dot Y_\alpha$ is the maximizer of
\[
\begin{aligned}
    \sup_{\dot Y\in T_Y}
    \left\{
        \ip{G_\alpha}{\dot Y}-Q(\dot Y)
    \right\}
    &=
    \sup_{B\in\R^{k\times m}}
    \left\{
        2\ip{\alpha}{B}
        -
        \inf_{A \in \Sym(k)}
        Q\!\left(VAV^\top+VBW^\top+WB^\top V^\top\right)
    \right\}.
\end{aligned}
\]
On the other hand, in~\eqref{eq:2-criticality}, the first term depends
only on \(B\) (see~\eqref{eq:onlyB} below), while the second term is exactly
\(Q(\dot Y)\). Hence, once \(B\) is fixed, the best chance of violating second-order
criticality is obtained by choosing \(A\) to minimize \(Q(\dot Y)\), exactly as in the nested optimization expression above. 
\end{remark}

\subsubsection*{Equivalent characterizations of \(\dot Y_\alpha\)}

The following lemma records several equivalent ways to define \(\dot Y_\alpha\).

\begin{lemma}
\label{lem:equiv-descent-direction}
For each \(\alpha\in\R^{k\times m}\), the vector \(\dot Y_\alpha\) is the unique element
of \(T_Y\) satisfying
\begin{align}\label{eq:Riesz}
    \ip{\dot Y}{L\dot Y_\alpha}
    =
    \ip{\dot Y}{G_\alpha}
    \qquad
    \forall \dot Y\in T_Y.
\end{align}
In particular, $Q(\dot Y_\alpha)
    =
    \frac12\ip{G_\alpha}{\dot Y_\alpha}.$

If \(\alpha\neq0\), then, up to multiplication by a nonzero scalar,
\(\dot Y_\alpha\) is the unique maximizer of each of the following optimization
problems:
\begin{align}
    \sup_{\dot Y\in T_Y,\ \dot Y\neq0}
    \Bigg\{\frac{\ip{G_\alpha}{\dot Y}^2}{Q(\dot Y)}\Bigg\}
    &=
    4Q(\dot Y_\alpha),
    \label{eq:Rayleigh_characterization}
    \\
    \sup_{\dot Y\in T_Y,\ Q(\dot Y)\le1}
    \Big\{\ip{G_\alpha}{\dot Y}\Big\}
    &=
    2\sqrt{Q(\dot Y_\alpha)}.
    \label{eq:support_G}
\end{align}
\end{lemma}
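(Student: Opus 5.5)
The plan is to treat this as a statement about a positive definite quadratic form on the finite-dimensional inner product space \(T_Y\). It uses only two facts: \(L\succ0\) on \(\Sym(\cU)\) and \(T_Y\subseteq\Sym(\cU)\). The first holds because \(L\) is assumed self-adjoint positive definite; equivalently, \(L^{-1}=I-\Gamma\succ0\) by Lemma~\ref{lem:gamma-properties}~\emph{(iv)}. Hence \(Q\) is strictly convex and coercive on \(T_Y\). The map \(\dot Y\mapsto \ip{G_\alpha}{\dot Y}-Q(\dot Y)\) is then strictly concave and tends to \(-\infty\) at infinity, so the maximizer in~\eqref{eq:dotYalpha} exists and is unique.

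Next I would derive~\eqref{eq:Riesz} from the first-order optimality condition. Since \(L\) is self-adjoint, the directional derivative of \(Q\) at \(\dot Y_\alpha\) along \(\dot Y\in T_Y\) is \(\ip{\dot Y}{L\dot Y_\alpha}\). Setting the derivative of the objective to zero along every direction in the linear space \(T_Y\) gives exactly~\eqref{eq:Riesz}. For uniqueness among solutions of~\eqref{eq:Riesz}, suppose two elements of \(T_Y\) both satisfy it. Their difference \(D\in T_Y\) satisfies \(\ip{\dot Y}{LD}=0\) for all \(\dot Y\in T_Y\); taking \(\dot Y=D\) gives \(\ip{D}{LD}=0\), so \(D=0\). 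Equivalently, \(\dot Y_\alpha\) is the Riesz representative of \(\ip{G_\alpha}{\cdot}\) in the inner product \(\ip{\cdot}{L\,\cdot}\) on \(T_Y\). Plugging \(\dot Y=\dot Y_\alpha\) into~\eqref{eq:Riesz} yields \(2Q(\dot Y_\alpha)=\ip{G_\alpha}{\dot Y_\alpha}\).

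For the two variational characterizations, write \(\langle\!\langle X,Y\rangle\!\rangle:=\ip{X}{LY}\), so that \(Q(\dot Y)=\tfrac12\langle\!\langle \dot Y,\dot Y\rangle\!\rangle\) and \(\ip{G_\alpha}{\dot Y}=\langle\!\langle \dot Y_\alpha,\dot Y\rangle\!\rangle\) for \(\dot Y\in T_Y\). Cauchy--Schwarz in this inner product gives
\[
\ip{G_\alpha}{\dot Y}^2\le \langle\!\langle \dot Y_\alpha,\dot Y_\alpha\rangle\!\rangle\,\langle\!\langle \dot Y,\dot Y\rangle\!\rangle=4Q(\dot Y_\alpha)Q(\dot Y).
\]
Equality holds iff \(\dot Y\) is a scalar multiple of \(\dot Y_\alpha\), which proves~\eqref{eq:Rayleigh_characterization}. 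Restricting to \(Q(\dot Y)\le1\) gives \(\ip{G_\alpha}{\dot Y}\le 2\sqrt{Q(\dot Y_\alpha)}\), attained uniquely at the positive multiple \(\dot Y_\alpha/\sqrt{Q(\dot Y_\alpha)}\), which proves~\eqref{eq:support_G}.

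The main obstacle is the nondegeneracy step: \(\alpha\neq0\) must force \(\dot Y_\alpha\neq0\), otherwise the ratios above are ill-defined and "unique up to scalar" fails. Two facts settle this. First, \(G_\alpha\in T_Y\), since it has \(A=0\) and \(B=\alpha\). Second, \(\fro{G_\alpha}^2=2\fro{\alpha}^2>0\) by orthonormality of \([V,W]\). Taking \(\dot Y=G_\alpha\) in~\eqref{eq:Riesz} then gives \(\ip{G_\alpha}{L\dot Y_\alpha}=\fro{G_\alpha}^2>0\), so \(\dot Y_\alpha\neq0\) and hence \(Q(\dot Y_\alpha)>0\).
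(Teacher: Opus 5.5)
Your proof is correct and follows essentially the same route as the paper. You get the Riesz identity from the first-order optimality condition of the strictly concave problem, and then apply Cauchy--Schwarz in the inner product \((\dot Y_1,\dot Y_2)\mapsto\ip{\dot Y_1}{L\dot Y_2}\) to obtain both variational characterizations. Your nondegeneracy check is a worthwhile detail the paper leaves implicit: \(G_\alpha\in T_Y\) with \(\fro{G_\alpha}^2=2\fro{\alpha}^2\), so \(\dot Y_\alpha\neq0\) whenever \(\alpha\neq0\), and this is exactly what makes the ratio and the ``unique up to scalar'' claim well defined.
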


\begin{proof}
The maximizer in~\eqref{eq:dotYalpha} is unique because \(Q\) is positive definite on
\(T_Y\). The first-order optimality condition for~\eqref{eq:dotYalpha} is
\[
    0=
    \ip{G_\alpha}{\dot Y}
    -
    \ip{\dot Y}{L\dot Y_\alpha}
    \qquad
    \forall \dot Y\in T_Y,
\]
which proves~\eqref{eq:Riesz}. Taking \(\dot Y=\dot Y_\alpha\) in~\eqref{eq:Riesz}
gives $\ip{G_\alpha}{\dot Y_\alpha}
    =
    \ip{\dot Y_\alpha}{L\dot Y_\alpha}
    =
    2Q(\dot Y_\alpha).$

Finally, by Cauchy--Schwarz for the
inner product \((\dot Y_1,\dot Y_2)\mapsto\ip{\dot Y_1}{L\dot Y_2}\),
\[
    \ip{G_\alpha}{\dot Y}^2
    =
    \ip{\dot Y}{L\dot Y_\alpha}^2
    \le
    \ip{\dot Y}{L\dot Y}\,
    \ip{\dot Y_\alpha}{L\dot Y_\alpha}
    =
    4Q(\dot Y)Q(\dot Y_\alpha),
\]
with equality if and only if \(\dot Y\) is proportional to \(\dot Y_\alpha\).
This proves~\eqref{eq:Rayleigh_characterization}. Restricting to
\(Q(\dot Y)\le1\) gives $\ip{G_\alpha}{\dot Y}
    \le
    2\sqrt{Q(\dot Y_\alpha)},$
with equality at
\[
    \dot Y
    =
    \frac{\dot Y_\alpha}{\sqrt{Q(\dot Y_\alpha)}}.
\]
This proves~\eqref{eq:support_G}.
\end{proof}

\subsubsection*{A dual interpretation of second-order criticality via ellipsoid containment}

We now explain the geometric meaning of the construction above. 
Let
\begin{align}\label{eq:dotYAB}
    \dot Y=VAV^\top+VBW^\top+WB^\top V^\top\in T_Y,
\end{align}
and let $\dot Z=\left(\frac12VA+WB^\top\right)\Sigma^{-1/2}$ be the factor perturbation realizing \(\dot Y\).
Since \(S=W\Omega W^\top\) by Lemma~\ref{lem:stress-psd}, the first term in~\eqref{eq:2-criticality} satisfies
\begin{align}\label{eq:onlyB}
    \ip{\dot Z\dot Z^\top}{S}
    =
    \tr(B^\top\Sigma^{-1}B\Omega).
\end{align}
Thus, keeping in mind~\eqref{eq:dotYAB}, second-order criticality can be written as
\begin{align}\label{eq:2crit_RQ}
    \tr(B^\top\Sigma^{-1}B \Omega)
    \le
    Q(\dot Y)
    \qquad
    \forall \dot Y\in T_Y.
    \tag{primal-2-criticality}
\end{align}

Define two subsets of \(\R^{k\times m}\):
\begin{align*}
    \mathcal K_Q
    &:=
    \{B \in \R^{k\times m} : \ Q(VAV^\top+VBW^\top+WB^\top V^\top)\le1 \text{  for some $A \in \Sym(k)$}\},\\
    \mathcal K_\Omega
    &:=
    \{B\in\R^{k\times m}:
    \tr(B^\top\Sigma^{-1}B\Omega)\le1\}.
\end{align*}
Then~\eqref{eq:2-criticality} is equivalent, by~\eqref{eq:2crit_RQ} and homogeneity, to the containment
\begin{align}\label{eq:ellipsoid_containment}
    \mathcal K_Q\subseteq\mathcal K_\Omega.
    \tag{ellipsoids}
\end{align}
These sets are ellipsoids, possibly degenerate if \(\Omega\) is singular.\footnote{
Indeed,
\(\mathcal K_Q\) is the image under the projection
\((A,B)\mapsto B\)
of the ellipsoid $\{(A,B):Q(VAV^\top+VBW^\top+WB^\top V^\top)\le1\}.$
Equivalently, the function $q(B):=\min_{A}
Q(VAV^\top+VBW^\top+WB^\top V^\top)$
is a positive definite quadratic form on
\(\R^{k\times m}\), and
\(\mathcal K_Q=\{B:q(B)\le1\}\).
}

Recall that the \emph{support function} of a convex set \(\mathcal K\) is
\[
    h_{\mathcal K}(\alpha)
    :=
    \sup_{B\in\mathcal K}\ip{\alpha}{B}.
\]
For a fixed $\alpha$, a maximizer $B \in\mathcal K$ is called a \emph{support point} corresponding to $\alpha$.
Geometrically, \(h_{\mathcal K}(\alpha)\) is the farthest distance that \(\mathcal K\)
reaches in the direction \(\alpha\); the hyperplane
\[
    \{B:\ip{\alpha}{B}=h_{\mathcal K}(\alpha)\}
\]
supports \(\mathcal K\). A standard fact from convex duality~\citep[Ex.~3.35]{boyd2004convex} is that, for closed convex
sets,
\[
    \mathcal K_1\subseteq \mathcal K_2
    \qquad\Longleftrightarrow\qquad
    h_{\mathcal K_1}(\alpha)\le h_{\mathcal K_2}(\alpha)
    \quad\text{for all }\alpha.
\]
Equivalently, non-containment is certified by a single ``separating'' hyperplane $\alpha$ satisfying $h_{\mathcal K_1}(\alpha) > h_{\mathcal K_2}(\alpha)$.

In the present setting, the support function of \(\mathcal K_Q\) is exactly what
\(\dot Y_\alpha\) computes. Indeed, by~\eqref{eq:support_G} in Lemma~\ref{lem:equiv-descent-direction},
\[
h_{\mathcal K_Q}(\alpha)
=
\sup_{\dot Y\in T_Y,\ Q(\dot Y)\le1}
\ip{\alpha}{B}
=
\frac12
\sup_{\dot Y\in T_Y,\ Q(\dot Y)\le1}
\ip{G_\alpha}{\dot Y}
=
\sqrt{Q(\dot Y_\alpha)},
\]
where the supremum is attained at a positive multiple of \(\dot Y_\alpha\).

The support function of \(\mathcal K_\Omega\) is also explicit. With the change of variables
\(X=\Sigma^{-1/2}B\Omega^{1/2}\),
\[
\begin{aligned}
h_{\mathcal K_\Omega}(\alpha)
&=
\sup_{\tr(B^\top\Sigma^{-1}B\Omega)\le1}
\ip{\alpha}{B} \\
&=
\sup_{\|X\|_{\mathrm F}\le1}
\ip{\Sigma^{1/2}\alpha\Omega^{\dagger/2}}{X} \\
&=
\|\Sigma^{1/2}\alpha\Omega^{\dagger/2}\|_{\mathrm F} 
=
\sqrt{\tr(\alpha^\top\Sigma\alpha\,\Omega^\dagger)},
\end{aligned}
\]
where the second equality uses the assumption
\(\im\alpha^\top\subseteq\im\Omega\), and the third follows from
Cauchy--Schwarz.\footnote{When
\(\Omega\) is singular, the support function of \(\mathcal K_\Omega\) is finite precisely
when \(\im \alpha^\top \subseteq \im\Omega\).}

Therefore,~\eqref{eq:2-criticality} is equivalent to the support-function inequality
\begin{equation}\label{eq:support_2crit}\tag{dual-2-criticality}
\begin{split}
    Q(\dot Y_\alpha)
    =
    h_{\mathcal K_Q}(\alpha)^2
    &\le
    h_{\mathcal K_\Omega}(\alpha)^2
    =
    \tr(\alpha^\top\Sigma\alpha\,\Omega^\dagger) \\ &\forall \alpha \in \mathbb{R}^{k \times m} \text{ satisfying $\im \alpha^\top \subseteq \im \Omega$}.
\end{split}
\end{equation}
This is the dual version of second-order criticality. We derive the same inequality again,
by a direct algebraic calculation, in Lemma~\ref{lem:exact-rank-one} of Section~\ref{sec:second-order}.

How does the variationally-defined direction \(\dot Y_\alpha\) arise from this dual
perspective? The probe \(\alpha\) is a candidate separating hyperplane for the
containment~\eqref{eq:ellipsoid_containment}. If \(\alpha\) violates the support-function
inequality~\eqref{eq:support_2crit}, then the containment fails, and therefore there is a
tangent vector violating the primal second-order condition~\eqref{eq:2crit_RQ}. This
tangent vector is precisely \(\dot Y_\alpha\), up to harmless scaling. More explicitly, if
\(\alpha\neq0\), then up to normalization, the cross block
\[
    B_\alpha:=V^\top\dot Y_\alpha W
\]
is the support point of \(\mathcal K_Q\) exposed by \(\alpha\).

\begin{remark}[Comparison with \citet{criscitiello2025snl}]
\label{rem:comparisontoolddescentdirections}
This ellipsoid viewpoint also clarifies the relation between the descent directions used here and
those in \citep[Thm.~1.1]{criscitiello2025snl}. Fix a probe \(\alpha\). The support point of the ellipsoid \(\mathcal K_\Omega\) in direction \(\alpha\) is, up to normalization,
\[
    B_\Omega(\alpha)=\Sigma\alpha\Omega^\dagger.
\]
\citet{criscitiello2025snl} use the descent directions 
$\dot Y = V B_\Omega(\alpha) W^\top + W B_\Omega(\alpha)^\top V^\top$.
By contrast, our proof uses the support point of the ellipsoid
\(\mathcal K_Q\) in the same direction \(\alpha\). This support point is
\[
    B_Q(\alpha):=V^\top\dot Y_\alpha W,
\]
and the full tangent vector \(\dot Y_\alpha\) also chooses the \(V,V\) block optimally (see Remark~\ref{rem:optimalVV}).
Both proofs use the same dual probes, but expose different ellipsoids: the earlier
directions come from \(\mathcal K_\Omega\), whereas the directions here come from
\(\mathcal K_Q\). 
The latter is the sharper and more natural choice for a fixed probe.
\end{remark}

\subsection{Descent directions under the structured-inverse hypothesis}\label{sec:descentdirectionsunder}

The variational definition of \(\dot Y_\alpha\)~\eqref{eq:dotYalpha} is conceptually useful, but to show that
\(\dot Y_\alpha\) is a descent direction we require a characterization that is as explicit
as possible. In this subsection, we derive one by exploiting the structure of
\(L\) provided by~\eqref{eq:F1}--\eqref{eq:F3}. Moreover, in light of
\eqref{eq:support_2crit}, we also derive an equally explicit lower bound on \(Q(\dot Y_\alpha)\).

Recall~\eqref{eq:defviwi} and~\eqref{eq:defGalpha}, and define the vector
\begin{align}\label{eq:def_source}
    s_\alpha\in\R^N,
    \qquad \text{with entries} \qquad
    s_\alpha(i):= \frac{1}{2} a_i^\top G_\alpha a_i = v_i^\top\alpha w_i.
    \tag{def-$s_\alpha$}
\end{align}
Also define the symmetric matrix
\begin{align}\label{eq:PiW}
    \Pi_W\in\R^{N\times N},
    \qquad
    (\Pi_W)_{ij}:= (a_i^\top WW^\top a_j)^2 = (w_i^\top w_j)^2.
    \tag{def-$\Pi_W$}
\end{align}
When $L = \Delta^* \Delta$, $s_\alpha = \frac{1}{2} \diag(G_\alpha)$ and $\Pi_W = (WW^\top) \odot (WW^\top)$, where $\odot$ denotes the entrywise product.

\begin{lemma}[Explicit description of $\dot Y_\alpha$]
\label{lem:best-response}
The matrix \(I-\Pi_W\) is positive definite.  Moreover:
\begin{itemize}
\item Defining
\begin{align}\label{eq:defhalpha}
    h_\alpha :=(I-\Pi_W)^{-1}s_\alpha, \qquad \beta_\alpha:=2\sum_i h_\alpha(i)w_i^{} w_i^\top,
    \tag{def-$h_\alpha, \beta_\alpha$}
\end{align}
and recalling~\eqref{eq:dotYalpha}, we have
\begin{align*}
    \dot Y_\alpha
    &=
    L^{-1}(G_\alpha+W\beta_\alpha W^\top)\\
    &=
    G_\alpha+W\beta_\alpha W^\top
    -
    \Gamma(G_\alpha+W\beta_\alpha W^\top) \\
    &=
    G_\alpha+W\beta_\alpha W^\top
    -2 \sum_i h_\alpha(i) a_i^{} a_i^\top
\end{align*}
\item A factor perturbation~\eqref{eq:factorperturbation} realizing \(\dot Y_\alpha\) is $\dot Z_\alpha
    = \left(\frac12VA_\alpha+WB_\alpha^\top\right)\Sigma^{-1/2}$
for some $A_\alpha$, and
$$B_\alpha = V^\top\dot Y_\alpha W
=
\alpha
    -
    V^\top\Gamma(G_\alpha+W\beta_\alpha W^\top)W
    =
    \alpha-2\sum_i h_\alpha(i)v_i^{} w_i^\top.$$
\item Lastly, 
$Q(\dot Y_\alpha)
    =
    \ip{\alpha}{B_\alpha}
    =
    \fro{\alpha}^2-2s_\alpha^\top h_\alpha^{},$
where 
\begin{align*}
s_\alpha^\top h_\alpha^{}
    \le
    \sum_i\frac{s_\alpha(i)^2}{1-\tau_i}.
\end{align*}
\end{itemize}
\end{lemma}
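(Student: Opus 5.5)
The approach is to characterize $\dot Y_\alpha$ through the Riesz identity~\eqref{eq:Riesz} of Lemma~\ref{lem:equiv-descent-direction}, using the fact that the orthogonal complement of $T_Y$ inside $\Sym(\cU)$ is exactly $\{W\beta W^\top:\beta\in\Sym(m)\}$. Condition~\eqref{eq:Riesz} says that $L\dot Y_\alpha-G_\alpha$ is orthogonal to $T_Y$. Hence $L\dot Y_\alpha=G_\alpha+W\beta W^\top$ for some $\beta\in\Sym(m)$, and by~\eqref{eq:F3}
\[
    \dot Y_\alpha=G_\alpha+W\beta W^\top-\Gamma(G_\alpha+W\beta W^\top).
\]
Conversely, any $\beta$ for which this $\dot Y$ lies in $T_Y$ satisfies~\eqref{eq:Riesz}, and uniqueness in Lemma~\ref{lem:equiv-descent-direction} then identifies it as $\dot Y_\alpha$. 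So the plan is to solve for $\beta$ from the constraint $W^\top\dot Y W=0$; this is the only condition, since $\dot Y\in\Sym(\cU)$ is automatic.

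Set $h(i):=\tfrac12 a_i^\top(G_\alpha+W\beta W^\top)a_i=s_\alpha(i)+\tfrac12 w_i^\top\beta w_i$. Then $\Gamma(G_\alpha+W\beta W^\top)=2\sum_i h(i)a_ia_i^\top$. Since $W^\top G_\alpha W=0$, the $W,W$-block constraint reads
\[
    \beta=2\sum_i h(i)w_iw_i^\top.
\]
Substituting this back into the definition of $h$ gives $h(i)=s_\alpha(i)+\sum_j (w_i^\top w_j)^2h(j)$, that is, $(I-\Pi_W)h=s_\alpha$. Once $I-\Pi_W$ is known to be invertible, this yields $h=h_\alpha$ and $\beta=\beta_\alpha$, and establishes the three displayed formulas for $\dot Y_\alpha$. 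The cross block follows from $V^\top W=0$ and $V^\top G_\alpha W=\alpha$:
\[
    B_\alpha=V^\top\dot Y_\alpha W=\alpha-2\sum_i h_\alpha(i)v_iw_i^\top.
\]
The factor perturbation is then the standard one from Section~\ref{sec:directions}.

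For the energy, Lemma~\ref{lem:equiv-descent-direction} gives $Q(\dot Y_\alpha)=\tfrac12\ip{G_\alpha}{\dot Y_\alpha}$, which equals $\ip{\alpha}{B_\alpha}$ because $G_\alpha$ only sees the cross blocks. Expanding,
\[
    \ip{\alpha}{B_\alpha}=\fro{\alpha}^2-2\sum_i h_\alpha(i)\,v_i^\top\alpha w_i=\fro{\alpha}^2-2s_\alpha^\top h_\alpha .
\]

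The main step is a single matrix inequality, $\Pi_W\preceq\Diag(\tau_i)$. It gives both positive definiteness, via $I-\Pi_W\succeq\Diag(1-\tau_i)\succeq\eta I$ by~\eqref{eq:viwinorm}, and the claimed bound
\[
    s_\alpha^\top(I-\Pi_W)^{-1}s_\alpha\le\sum_i s_\alpha(i)^2/(1-\tau_i),
\]
by operator monotonicity of inversion. To prove it, write
\[
    x^\top\Pi_W x=\fro{\textstyle\sum_i x_iw_iw_i^\top}^2=\sup_{\fro{X}\le1}\Big(\sum_i x_i\,w_i^\top Xw_i\Big)^2 .
\]
Apply weighted Cauchy--Schwarz, which bounds this by $\bigl(\sum_i\tau_ix_i^2\bigr)\bigl(\sum_i (w_i^\top Xw_i)^2/\tau_i\bigr)$; indices with $\tau_i=0$ drop out. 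Then use $(w_i^\top Xw_i)^2\le\tau_i\,w_i^\top X^2w_i$ together with $\sum_i w_iw_i^\top\preceq I_m$ from~\eqref{eq:sumvvtwwt}, which gives $\sum_i w_i^\top X^2w_i\le\fro{X}^2\le1$. This mirrors the proof of Lemma~\ref{lem:gamma-properties}~\emph{(iv)}, and choosing the right weights $\tau_i$ is the only delicate point.
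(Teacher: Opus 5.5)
Your proof is correct. Its skeleton matches the paper's. The paper posits $\hat Y_\alpha:=L^{-1}(G_\alpha+W\beta_\alpha W^\top)$, checks $W^\top\hat Y_\alpha W=0$ and the Riesz identity, and invokes uniqueness. You instead derive the same formula by reading $W\beta W^\top$ as the multiplier for the $W,W$-block constraint, which has the small advantage of explaining where the system $(I-\Pi_W)h_\alpha=s_\alpha$ comes from.

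The genuine difference is in the key inequality $\Pi_W\preceq\Diag(\tau_i)$. The paper writes $I-\Pi_W=\Diag(1-\rho_i)+L_W$, where $\rho_i:=\sum_j(w_i^\top w_j)^2=w_i^\top(\sum_j w_jw_j^\top)w_i\le\tau_i$ and $L_W:=\Diag(\rho_i)-\Pi_W$. Since $\Pi_W$ is entrywise nonnegative, $L_W$ is a weighted graph Laplacian and hence positive semidefinite. This uses only entrywise nonnegativity plus the Bessel bound, needs no choice of weights, and gives the slightly sharper $\Pi_W\preceq\Diag(\rho_i)$.

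You instead view $\Pi_W$ as the Gram matrix of the rank-one matrices $w_iw_i^\top$ and dualize. The bound then becomes the weighted Cauchy--Schwarz estimate $\sum_i(w_i^\top Xw_i)^2/\tau_i\le\fro{X}^2$. This is the argument of Lemma~\ref{lem:gamma-properties}~\emph{(iv)}, with the uniform constant $1-\eta$ replaced by the pointwise $\tau_i$. It is equally valid: indices with $\tau_i=0$ have $w_i=0$, and restricting to symmetric $X$ costs nothing since $\sum_i x_iw_iw_i^\top$ is symmetric. Your route makes transparent that positivity of $I-\Pi_W$ is the compressed analogue of positivity of $I-\Gamma$, but the Laplacian argument is shorter and more direct.
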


\begin{proof}
We first prove that \(I-\Pi_W\) is positive definite using its Laplacian structure.
Define
\[
    \rho_i:=\sum_{j=1}^N (w_i^\top w_j)^2
    =
    w_i^\top\left(\sum_{j=1}^N w_jw_j^\top\right)w_i .
\]
Using~\eqref{eq:sumvvtwwt} and~\eqref{eq:viwinorm}, we have
\begin{align}\label{eq:thisguy}
    0\le \rho_i\le \|w_i\|^2=\tau_i \leq 1-\eta.
\end{align}
Now define
\[
    L_W:=\Diag(\rho_i)-\Pi_W.
\]
Then \(L_W\) is a weighted graph Laplacian, corresponding to the graph with weights $(\Pi_W)_{ij} = (w_i^\top w_j)^2 \ge0$. 
Therefore, $L_W \succeq 0$, and
\begin{align}\label{eq:fromproof}
    I-\Pi_W
    =
    \Diag(1-\rho_i)+L_W \succeq \Diag(1-\rho_i)\succeq \Diag(1-\tau_i)\succeq \eta I,
\end{align}
using~\eqref{eq:thisguy}.
Hence \(I-\Pi_W\succ0\).

\paragraph{Computing $\dot Y_\alpha$.}
Define $\hat Y_\alpha := L^{-1}(G_\alpha+W\beta_\alpha W^\top)$.
We aim to show that $\hat Y_\alpha$ is in $T_Y$ and satisfies the stationarity condition~\eqref{eq:Riesz} in Lemma~\ref{lem:equiv-descent-direction}.
Uniqueness then implies $\dot Y_\alpha = \hat Y_\alpha$.

Using~\eqref{eq:def_source},~\eqref{eq:PiW} and~\eqref{eq:defhalpha}, for each $i \in \{1, \ldots, N\}$ we have
\begin{align*}
    a_i^\top(G_\alpha+W\beta_\alpha W^\top)a_i
    &=
    2s_\alpha(i)+ w_i^\top \beta_\alpha w_i
    =
    2s_\alpha(i)+ 2 w_i^\top \bigg(\sum_j h_\alpha(j) w_j w_j^\top\bigg) w_i \\
    &=
    2s_\alpha(i)+ 2 \sum_j h_\alpha(j) (\Pi_W)_{ij}
    =
    2s_\alpha(i)+2(\Pi_Wh_\alpha)(i) \\
    &=
    2\big[(I - \Pi_W) h_\alpha+\Pi_Wh_\alpha\big](i)
    =
    2h_\alpha(i).
\end{align*}
Hence, by~\eqref{eq:F3},
\[
\begin{aligned}
    \hat Y_\alpha
    &=
    L^{-1}(G_\alpha+W\beta_\alpha W^\top)\\
    &=
    G_\alpha+W\beta_\alpha W^\top
    -
    \Gamma(G_\alpha+W\beta_\alpha W^\top)\\
    &=
    G_\alpha+W\beta_\alpha W^\top
    -
    \sum_i a_i^\top(G_\alpha+W\beta_\alpha W^\top) a_i \cdot a_i^{} a_i^\top\\
    &=
    G_\alpha+W\beta_\alpha W^\top-2\sum_i h_\alpha(i) a_i^{} a_i^\top.
\end{aligned}
\]
From this expression, we conclude $\hat Y_\alpha \in T_Y$.  Indeed, multiplying both sides by $W$:
\[
    W^\top\hat Y_\alpha W
    =
    \beta_\alpha-2\sum_i h_\alpha(i)w_iw_i^\top
    =
    0,
\]
using the definition~\eqref{eq:defhalpha}.

Moreover, $L\hat Y_\alpha=G_\alpha+W\beta_\alpha W^\top$ as an immediate consequence of the definition of \(\hat Y_\alpha\).
Therefore, for every \(\dot Y\in T_Y\), we have
\begin{align}\label{eq:stationarityagain}
    \ip{\dot Y}{L\hat Y_\alpha}
    =
    \ip{\dot Y}{G_\alpha}.
\end{align}
Appealing to Lemma~\ref{lem:equiv-descent-direction}, we conclude $\dot Y_\alpha = \hat Y_\alpha$.  This proves the first bullet.

\paragraph{The cross block $B_\alpha$.}
Using the formula for \(\dot Y_\alpha\) just derived,
\[
    B_\alpha = V^\top \dot Y_\alpha W
    = \alpha
    -
    V^\top\Gamma(G_\alpha+W\beta_\alpha W^\top)W
    =
    \alpha-2\sum_i h_\alpha(i)v_i^{} w_i^\top.
\]

\paragraph{Computing $Q(\dot Y_\alpha)$.}
By~\eqref{eq:stationarityagain} (or Lemma~\ref{lem:equiv-descent-direction}), the formula for $B_\alpha$ just derived, and~\eqref{eq:def_source},
\begin{align*}
    Q(\dot Y_\alpha) &=
    \frac12\ip{\dot Y_\alpha}{L\dot Y_\alpha}
    =
    \frac12\ip{\dot Y_\alpha}{G_\alpha}
    =
    \ip{\alpha}{B_\alpha} \\
    &= \fro{\alpha}^2
    -
    2\sum_i h_\alpha(i)v_i^\top\alpha w_i
    =
    \fro{\alpha}^2-2s_\alpha^\top h_\alpha^{}.
\end{align*}
Let us bound the last term $s_\alpha^\top h_\alpha^{}$.
As shown in equation~\eqref{eq:fromproof}, $I-\Pi_W
    \succeq
    \Diag(1-\tau_i).$
Since both sides are positive definite, $(I-\Pi_W)^{-1}
    \preceq
    \Diag\left(\frac1{1-\tau_i}\right).$
In particular,
\begin{equation*}
s_\alpha^\top h_\alpha = s_\alpha^\top (I-\Pi_W)^{-1}s_\alpha \leq s_\alpha^\top \Diag\left(\frac1{1-\tau_i}\right) s_\alpha = \sum_i\frac{s_\alpha(i)^2}{1-\tau_i}.
\qedhere
\end{equation*}
\end{proof}

\subsection{Consequences of second-order criticality}
\label{sec:second-order}

We now prove Theorem~\ref{thm:intermediate} using the descent directions developed in
Sections~\ref{sec:directions} and~\ref{sec:descentdirectionsunder}. The argument
proceeds in three steps. First, we specialize the dual probe to rank-one matrices
\(\alpha=u\xi^\top\). Second, we average over \(u\in\ker M=\ker(V^\top Y_\star V)\)
and \(\xi\in\im\Omega\) with covariance \(R\). Finally, we choose
\(R=K=W^\top Y_\star W\), from which the theorem follows.

Throughout this subsection, \(Z\) denotes a full-rank second-order critical point, and
we adopt the notation of Section~\ref{sec:first-order}. By
Lemma~\ref{lem:stress-psd}, $S=W\Omega W^\top,
\Omega\succeq0,$
and we recall the matrices
\[
M:=V^\top Y_\star V=\Sigma-P,
\qquad
P:=V^\top\Gamma(S)V,
\qquad
K:=W^\top Y_\star W=\Omega-W^\top\Gamma(S)W,
\]
defined in~\eqref{eq:M}, \eqref{eq:defP}, and~\eqref{eq:K}.

\begin{lemma}[Rank-one $\alpha$]
\label{lem:exact-rank-one}
For every \(u\in\R^k\), every \(\xi\in\im\Omega\), and
\(\alpha=u\xi^\top\),
\begin{align}\label{eq:rank-one-2crit}
    0
    \le
    (\xi^\top\Omega^\dagger \xi)\,u^\top(M+P)u
    -
    \norm{\xi}^2\norm{u}^2
    +
    2s_\alpha^\top h_\alpha^{}.
\end{align}
\end{lemma}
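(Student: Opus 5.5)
The plan is to plug the candidate direction \(\dot Y_\alpha\) from Lemma~\ref{lem:best-response} into the primal second-order condition~\eqref{eq:2crit_RQ}, with a scaling chosen optimally. Equivalently, I will verify the dual inequality~\eqref{eq:support_2crit} directly for \(\alpha=u\xi^\top\); note that \(\im\alpha^\top=\operatorname{span}(\xi)\subseteq\im\Omega\), so this \(\alpha\) is admissible. Concretely, for \(t\in\R\) the tangent vector \(t\dot Y_\alpha\in T_Y\) has cross block \(tB_\alpha\). Then~\eqref{eq:2crit_RQ} gives
\[
t^2\tr(B_\alpha^\top\Sigma^{-1}B_\alpha\Omega)\le t^2 Q(\dot Y_\alpha).
\]
Taken alone, this only compares \(B_\alpha\) with itself. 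The better route is Cauchy--Schwarz in the \(\Sigma^{-1},\Omega\) geometry:
\[
\ip{\alpha}{B}^2=\ip{\Sigma^{1/2}\alpha\Omega^{\dagger/2}}{\Sigma^{-1/2}B\Omega^{1/2}}^2\le \tr(\alpha^\top\Sigma\alpha\Omega^\dagger)\,\tr(B^\top\Sigma^{-1}B\Omega),
\]
which uses \(\im\alpha^\top\subseteq\im\Omega\) to write \(\alpha=\alpha\Omega^{\dagger/2}\Omega^{1/2}\). Applying this with \(B=B_\alpha\), using \(\ip{\alpha}{B_\alpha}=Q(\dot Y_\alpha)\) from Lemma~\ref{lem:best-response}, and then applying~\eqref{eq:2crit_RQ} to \(\dot Y_\alpha\), we get
\[
Q(\dot Y_\alpha)^2\le \tr(\alpha^\top\Sigma\alpha\Omega^\dagger)\,Q(\dot Y_\alpha).
\]
If \(Q(\dot Y_\alpha)>0\), dividing gives \(Q(\dot Y_\alpha)\le\tr(\alpha^\top\Sigma\alpha\Omega^\dagger)\). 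If \(Q(\dot Y_\alpha)=0\), the same inequality holds trivially, since the right-hand side is nonnegative. This is exactly~\eqref{eq:support_2crit}, now derived algebraically rather than via ellipsoid containment.

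Next I substitute the explicit forms. By Lemma~\ref{lem:best-response}, \(Q(\dot Y_\alpha)=\fro{\alpha}^2-2s_\alpha^\top h_\alpha\), and for \(\alpha=u\xi^\top\) we have \(\fro{\alpha}^2=\norm{u}^2\norm{\xi}^2\). On the other side,
\[
\tr(\alpha^\top\Sigma\alpha\Omega^\dagger)=\tr(\xi u^\top\Sigma u\xi^\top\Omega^\dagger)=(u^\top\Sigma u)(\xi^\top\Omega^\dagger\xi).
\]
Finally, \(\Sigma=M+P\) by~\eqref{eq:M}. Rearranging the inequality \(\norm{u}^2\norm{\xi}^2-2s_\alpha^\top h_\alpha\le(\xi^\top\Omega^\dagger\xi)\,u^\top(M+P)u\) yields~\eqref{eq:rank-one-2crit}.

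The main point needing care is the Cauchy--Schwarz step when \(\Omega\) is singular. The identity \(\alpha=\alpha\Omega^{\dagger/2}\Omega^{1/2}\) holds because \(\Omega^{\dagger/2}\Omega^{1/2}\) is the projector onto \(\im\Omega\) and \(\xi\in\im\Omega\). I would also confirm that applying~\eqref{eq:2crit_RQ} to \(\dot Y_\alpha\) is legitimate: its cross block is \(B_\alpha=V^\top\dot Y_\alpha W\), and \(W^\top\dot Y_\alpha W=0\), both of which were shown in Lemma~\ref{lem:best-response}. The remaining steps are routine.
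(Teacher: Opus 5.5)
Your proposal is correct and is essentially the paper's second proof. You apply the primal condition~\eqref{eq:2crit_RQ} to $\dot Y_\alpha$, use $\ip{\alpha}{B_\alpha}=Q(\dot Y_\alpha)$, and divide by $Q(\dot Y_\alpha)$; the only difference is that you merge the paper's two Cauchy--Schwarz steps (first $\xi\xi^\top\preceq(\xi^\top\Omega^\dagger\xi)\Omega$, then Cauchy--Schwarz in the $\Sigma$-inner product) into one Cauchy--Schwarz in the weighted Frobenius pairing, which has the small bonus of giving~\eqref{eq:support_2crit} for every admissible $\alpha$ and not only rank-one ones, while the opening detour with the scaling $t$ is unnecessary.
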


\begin{proof}
We give two proofs, both using the descent direction \(\dot Y=\dot Y_\alpha\) with \(\alpha=u\xi^\top\). The first relies on the support-function formulation~\eqref{eq:support_2crit}, whereas the second derives the same estimate directly from the second-order condition~\eqref{eq:2crit_RQ}.

\paragraph{Proof 1.} Since \(\xi\in\im\Omega\), \(\alpha=u\xi^\top\) satisfies
\(\im\alpha^\top\subseteq\im\Omega\). Hence~\eqref{eq:support_2crit} gives
\[
    Q(\dot Y_\alpha)
    \le
    \tr(\alpha^\top\Sigma\alpha\,\Omega^\dagger)
    =
    (\xi^\top\Omega^\dagger\xi)\,u^\top\Sigma u .
\]
On the other hand, the third bullet of Lemma~\ref{lem:best-response} gives
\[
    Q(\dot Y_\alpha)
    =
    \fro{\alpha}^2-2s_\alpha^\top h_\alpha
    =
    \norm{u}^2\norm{\xi}^2-2s_\alpha^\top h_\alpha .
\]
Using \(\Sigma=M+P\), which follows from~\eqref{eq:M}, and rearranging gives
\eqref{eq:rank-one-2crit}.

\paragraph{Proof 2.}
By~\eqref{eq:2crit_RQ} with $\dot Y = \dot Y_\alpha$,
\begin{align}\label{eq:ineq1}
    \tr(B_\alpha^\top\Sigma^{-1}B_\alpha\Omega)
    \le
    Q(\dot Y_\alpha),
\end{align}
where \(B_\alpha=V^\top\dot Y_\alpha W\).
Since \(\xi\in\im\Omega\),
\(
\xi\xi^\top\preceq(\xi^\top\Omega^\dagger\xi)\Omega
\)
by Cauchy--Schwarz, and therefore
\begin{align}\label{eq:ineq2}
    \xi^\top B_\alpha^\top\Sigma^{-1}B_\alpha\xi
    \le
    (\xi^\top\Omega^\dagger\xi)\,
    \tr(B_\alpha^\top\Sigma^{-1}B_\alpha\Omega).
\end{align}
Using $Q(\dot Y_\alpha) = \langle u \xi^\top, B_\alpha\rangle$ by Lemma~\ref{lem:best-response}, and applying Cauchy--Schwarz in the \(\Sigma\)-inner product,
\begin{align}\label{eq:ineq3}
    Q(\dot Y_\alpha)^2
    =
    (u^\top B_\alpha\xi)^2
    \le
    (u^\top\Sigma u)\,
    \xi^\top B_\alpha^\top\Sigma^{-1}B_\alpha\xi.
\end{align}
Combining the inequalities~\eqref{eq:ineq1},~\eqref{eq:ineq2} and~\eqref{eq:ineq3} gives
\[
    Q(\dot Y_\alpha)^2
    \le
    (\xi^\top\Omega^\dagger\xi)\,
    (u^\top\Sigma u)\,
    Q(\dot Y_\alpha).
\]
Hence
\[
    Q(\dot Y_\alpha)
    \le
    (\xi^\top\Omega^\dagger\xi)\,
    u^\top\Sigma u,
\]
the case \(Q(\dot Y_\alpha)=0\) being trivial.
The conclusion now follows exactly as in the first proof.
\end{proof}

\subsubsection*{Averaging over $u, \xi$} 
We now restrict \(u\) to the kernel of the \(V,V\) block of $Y_\star$. Define
\begin{align}\label{eq:defNspace}\tag{def-$\mathcal{N}, d$}
    \mathcal N:=\ker M,
    \qquad
    d:=\dim\mathcal N.
\end{align}
Since \(M=V^\top Y_\star V = V^\top Z_\star^{} Z_\star^\top V\) has rank at most $\ell$,
\begin{align}\label{eq:dimNlower}
    d
    =
    k-\rank(M)
    \ge
    k-\ell.
\end{align}
The subspace \(\mathcal N\) is where overparameterization enters the proof: increasing
\(k\) increases the guaranteed dimension of \(\mathcal N\).

\begin{lemma}
\label{lem:traced-covariance}
For every \(R\succeq0\) with \(\im R\subseteq\im\Omega\),
\begin{align}\label{eq:traced-covariance-rough}
    \tr(R)d
    \le
    (1-\eta)\big[\tr(\Omega^\dagger R)\,\tr(K)
    +
    2\tr(R)\big].
\end{align}
\end{lemma}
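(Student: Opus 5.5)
The plan is to sum the rank-one inequality~\eqref{eq:rank-one-2crit} of Lemma~\ref{lem:exact-rank-one} over a family of pairs \((u,\xi)\), with \(u\) ranging over an orthonormal basis of \(\mathcal N=\ker M\) and \(\xi\) ranging over a factorization of \(R\). Concretely, I fix an orthonormal basis \(u_1,\ldots,u_d\) of \(\mathcal N\). I also write \(R=\sum_j \xi_j\xi_j^\top\) with each \(\xi_j\in\im R\subseteq\im\Omega\), for instance using scaled eigenvectors of \(R\). Lemma~\ref{lem:exact-rank-one} then applies to every \(\alpha=u_r\xi_j^\top\). Before summing, I replace the term \(2s_\alpha^\top h_\alpha\) by its upper bound \(2\sum_i s_\alpha(i)^2/(1-\tau_i)\) from Lemma~\ref{lem:best-response}. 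After this replacement, every term is a quadratic form in \(\xi\), so summing over \(j\) replaces \(\xi\xi^\top\) by \(R\).

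\textbf{First term.} Since \(Mu_r=0\), we have \(u_r^\top(M+P)u_r=u_r^\top Pu_r\). Summing over \(r\) gives \(\tr(P_{\mathcal N}P)\le\tr P\), because \(P\succeq0\) by~\eqref{eq:defP}, as \(d_i\ge0\). Lemma~\ref{lem:first-order-blocks} then bounds this by \((1-\eta)\tr K\). The prefactor is \(\xi_j^\top\Omega^\dagger\xi_j\), which does not depend on \(u\). Summing it over \(j\) gives \(\tr(\Omega^\dagger R)\), so the first term totals at most \((1-\eta)\tr(\Omega^\dagger R)\tr K\).

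\textbf{Second term.} The second term \(\norm{\xi_j}^2\norm{u_r}^2\) sums to exactly \(\tr(R)\,d\).

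\textbf{Third term.} This is the step needing care. Here \(s_\alpha(i)=(v_i^\top u_r)(\xi_j^\top w_i)\). Summing over \(r\) gives \(\sum_r (v_i^\top u_r)^2=v_i^\top P_{\mathcal N}v_i\le\norm{v_i}^2\). Summing over \(j\) gives \(\sum_j(\xi_j^\top w_i)^2=w_i^\top Rw_i\). The term is therefore bounded by \(2\sum_i \norm{v_i}^2\, w_i^\top R w_i/(1-\tau_i)\).

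The key inequality is \(\norm{v_i}^2\le 1-\eta-\tau_i\le(1-\eta)(1-\tau_i)\). The first step is~\eqref{eq:viwinorm}, and the second holds because \((1-\eta)(1-\tau_i)=1-\eta-\tau_i+\eta\tau_i\). The ratio \(\norm{v_i}^2/(1-\tau_i)\) is thus at most \(1-\eta\), and the third term is at most \(2(1-\eta)\ip{\sum_i w_iw_i^\top}{R}\). By~\eqref{eq:sumvvtwwt} and \(R\succeq0\), this is at most \(2(1-\eta)\tr R\).

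Collecting the three sums, the summed inequality reads \(0\le(1-\eta)\tr(\Omega^\dagger R)\tr K-\tr(R)\,d+2(1-\eta)\tr R\). Rearranging gives~\eqref{eq:traced-covariance-rough}.

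The main obstacle is the third term. The crude bound \(\norm{v_i}^2\le 1-\eta\) would leave an uncontrolled factor \(1/(1-\tau_i)\), which can be as large as \(1/\eta\). The argument works only because the denominators \(1-\tau_i\) are absorbed using the complementary norm budget \(\norm{v_i}^2+\tau_i\le 1-\eta\) from~\eqref{eq:viwinorm}.
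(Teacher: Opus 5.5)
Your proposal is correct and follows essentially the same route as the paper. Both arguments apply Lemma~\ref{lem:exact-rank-one} to $\alpha=u\xi^\top$ with $u\in\mathcal N$ and replace $s_\alpha^\top h_\alpha$ by $\sum_i s_\alpha(i)^2/(1-\tau_i)$. They then sum over $R=\sum_j\xi_j\xi_j^\top$ and over an orthonormal basis of $\mathcal N$ (the paper phrases the latter as taking $\tr_{\mathcal N}$). Finally, they bound the $P$-term by Lemma~\ref{lem:first-order-blocks} and the error term by $\norm{v_i}^2\le 1-\eta-\tau_i$ together with \eqref{eq:sumvvtwwt}, using the fact $\tr(\Omega^\dagger R)\ge0$ when substituting $\tr_{\mathcal N}(P)\le(1-\eta)\tr K$.
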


\begin{proof}
Fix \(\xi\in\im\Omega\) and \(u\in\mathcal N=\ker M\), and set
\(\alpha=u\xi^\top\). Lemma~\ref{lem:exact-rank-one} gives
\[
    0
    \le
    (\xi^\top\Omega^\dagger \xi)\,u^\top P u
    -
    \norm{\xi}^2\norm{u}^2
    +
    2s_\alpha^\top h_\alpha .
\]
By Lemma~\ref{lem:best-response} and~\eqref{eq:def_source},
\[
    s_\alpha^\top h_\alpha^{}
    \le
    \sum_i\frac{s_\alpha(i)^2}{1-\tau_i}
    =
    u^\top
    \left(
        \sum_i
        \frac{w_i^\top \xi\xi^\top w_i}{1-\tau_i}
        v_i^{} v_i^\top
    \right)u .
\]
Thus
\begin{align}\label{eq:precedinginequ}
    0
    \le
    \tr(\Omega^\dagger \xi\xi^\top)\,u^\top P u
    -
    \tr(\xi\xi^\top)\norm{u}^2
    +
    2u^\top
    \left(
        \sum_i
        \frac{w_i^\top \xi\xi^\top w_i}{1-\tau_i}
        v_i^{} v_i^\top
    \right)u .
\end{align}
Since \(R\succeq0\) and \(\im R\subseteq\im\Omega\), it admits a decomposition
\(R=\sum_j \xi_j^{} \xi_j^\top\) with \(\xi_j\in\im\Omega\). Applying
\eqref{eq:precedinginequ} to each \(\xi_j\) and summing over $j$ gives, for every
\(u\in\mathcal N\),
\begin{align}\label{eq:pointwise-covariance}
    0
    \le
    u^\top
    \left[
        \tr(\Omega^\dagger R)P
        -
        \tr(R)I_k
        +
        2\sum_i
        \frac{w_i^\top Rw_i}{1-\tau_i}
        v_i^{} v_i^\top
    \right]u .
\end{align}
Equivalently, the compression of the bracketed matrix to \(\mathcal N\) is positive
semidefinite. Taking its trace gives
\begin{align}\label{eq:traced-covariance-prelim}
    0
    \le
    \tr(\Omega^\dagger R)\tr_{\mathcal N}(P)
    - \tr(R) \tr_{\mathcal N}(I_k)
    +
    2\tr_{\mathcal N}
    \left(
        \sum_i
        \frac{w_i^\top Rw_i}{1-\tau_i}
        v_i^{} v_i^\top
    \right).
\end{align}
For the middle term, $\tr_{\mathcal N}(I_k) = \dim \mathcal{N} = d$.
We now bound the first and last terms. 

Since \(P\succeq0\), Lemma~\ref{lem:first-order-blocks}
implies
\begin{align}\label{eq:traceN-P-bound}
    \tr_{\mathcal N}(P)\le \tr(P)\le (1-\eta)\tr(K).
\end{align}
Moreover, using~\eqref{eq:viwinorm} and then~\eqref{eq:sumvvtwwt},
\begin{align}\label{eq:traceN-error-bound}
    \tr_{\mathcal N}
    \left(
        \sum_i
        \frac{w_i^\top Rw_i}{1-\tau_i}
        v_i v_i^\top
    \right)
    &\le
    \sum_i
    \frac{w_i^\top Rw_i}{1-\tau_i}\norm{v_i}^2 \notag\\
    &\le
    \sum_i
    w_i^\top Rw_i\,
    \frac{1-\tau_i-\eta}{1-\tau_i} \notag\\
    &\le
    (1-\eta)\sum_i w_i^\top Rw_i \notag\\
    &=
    (1-\eta)\left\langle \sum_i w_iw_i^\top,R\right\rangle
    \le
    (1-\eta)\tr(R).
\end{align}
Substituting~\eqref{eq:traceN-P-bound} and~\eqref{eq:traceN-error-bound}
into~\eqref{eq:traced-covariance-prelim} and rearranging yields~\eqref{eq:traced-covariance-rough}.
\end{proof}

\begin{remark}[$R$ as a covariance]
The matrix \(R\) may be viewed as a covariance. Indeed, applying
Lemma~\ref{lem:exact-rank-one} with \(\alpha=u\xi^\top\), and then averaging over a
random vector \(\xi\in\im\Omega\) with covariance \(R\), gives precisely inequality~\eqref{eq:pointwise-covariance}. 
\end{remark}

\subsubsection*{Choosing the covariance $R$, and the proof of Theorem~\ref{thm:intermediate}}

We now choose \(R=K=W^\top Y_\star W\).
This is admissible because Lemma~\ref{lem:first-order-blocks} gives
\(0\preceq K\preceq\Omega\), and therefore
\(\im K\subseteq\im\Omega\).
We distinguish the two cases appearing in Theorem~\ref{thm:intermediate}.

\paragraph{Case \(K=0\).}
If \(K=0\), then \(Z\) is globally optimal. Indeed,
\[
    g(Z;Z_\star)
    =
    \ip{Y-Y_\star}{L(Y-Y_\star)}
    =
    \ip{Y_\star-Y}{S}
    =
    \ip{Y_\star-Y}{W\Omega W^\top}
    =
    \tr(K\Omega),
\]
where we used \(S=W\Omega W^\top\) (Lemma~\ref{lem:stress-psd}) and
\(W^\top(Y_\star-Y)W=W^\top Y_\star W=K\).
Hence \(K=0\) implies \(g(Z;Z_\star)=0\).

\paragraph{Case $K \neq 0$.}
Assume \(K\neq0\), which implies \(\tr K>0\). Applying
Lemma~\ref{lem:traced-covariance} with \(R=K\),
\[
    \tr(K)d
    \le
    (1-\eta)\bigl(\tr(\Omega^\dagger K)+2\bigr)\tr(K).
\]
Dividing by \(\tr K>0\),
\begin{align}\label{eq:d-bound-kappa}
    d\le
    (1-\eta)\bigl(\tr(\Omega^\dagger K)+2\bigr).
\end{align}

It remains to show that
\(\tr(\Omega^\dagger K)\le\rank K\).
Since
\(0\preceq K\preceq\Omega\),
\[
    0\preceq \Omega^{\dagger/2}K\Omega^{\dagger/2}
    \preceq
    P_{\im\Omega}.
\]
Thus all eigenvalues of \(\Omega^{\dagger/2}K\Omega^{\dagger/2}\) lie in \([0,1]\).
Therefore,
\[
    \tr(\Omega^\dagger K)
    =
    \tr(\Omega^{\dagger/2}K\Omega^{\dagger/2})
    \le
    \rank(\Omega^{\dagger/2}K\Omega^{\dagger/2})
    \le
    \rank K.
\]
Substituting this into~\eqref{eq:d-bound-kappa} and combining with~\eqref{eq:dimNlower} gives
\begin{align}\label{eq:d-bound-rankK}
    k-\ell
    \le d
    \le
    (1-\eta)(\rank K+2)
    =
    (1-\eta)\bigl(\rank(W^\top Y_\star W)+2\bigr),
\end{align}
which is exactly the claimed alternative in Theorem~\ref{thm:intermediate}.

\begin{remark}[Randomized descent directions]
Lemma~\ref{lem:traced-covariance} also admits a probabilistic proof. Let
\(U_{\mathcal N}\in\R^{k\times d}\) have orthonormal columns spanning
\(\mathcal N=\ker M\), let \(R\succeq0\) satisfy
\(\im R\subseteq\im\Omega\), choose a factorization $R=FF^\top,$
and let \(G\in\R^{d\times q}\) have independent standard Gaussian entries. Define the
random probe
\[
    \alpha:=U_{\mathcal N}GF^\top.
\]
Applying~\eqref{eq:support_2crit} to this
random \(\alpha\), taking expectations, and estimating the resulting terms exactly as in
the proof of Lemma~\ref{lem:traced-covariance}, recovers
\eqref{eq:traced-covariance-rough}.

For the covariance used in the proof, $R=K=(W^\top Z_\star)(W^\top Z_\star)^\top,$
we may take \(F=W^\top Z_\star\). Writing $Z_{\rm ls}:=WW^\top Z_\star$ (Remark~\ref{rem:lsresidual}),
the corresponding random dual tangent vector is
\[
    G_\alpha
    =
    (VU_{\mathcal N}G)Z_{\rm ls}^\top
    +
    Z_{\rm ls}(VU_{\mathcal N}G)^\top.
\]
Thus the proof may equivalently be viewed as averaging over random dual probes which couple
random motions in the kernel \(\mathcal N\) with the least-squares residual
\(Z_{\rm ls}\). The associated descent directions \(\dot Y_\alpha\) are then random as
well.
\end{remark}

\section{The codimension-one case}
\label{sec:meq1}

We now consider the case
\[
    m:=p-k=1.
\]
At a full-rank point $Z$, \(m\) is the codimension of \(\im(Z)\) within \(\mathcal U\).
For complete-graph EDG, \(p=n-1\), so \(m=1\) is equivalent to
\[
    k=n-2.
\]
Thus, from a computational point of view, this regime is not especially attractive:
the optimization dimension \(k\) scales with \(n\), rather than satisfying \(k\ll n\).

We nevertheless study this case carefully for two reasons. First, a refined analysis
allows us to reach the conjectured threshold \(k\ge \ell+1\); see
Theorem~\ref{thm:meq1-complete}. For complete-graph EDG, this provides
theoretical evidence, complementing existing numerical evidence, for the conjecture that relaxing by a single dimension
is sufficient to obtain a benign landscape.

Second, the proof introduces a new mechanism that we expect may be useful in
approaching the full \(k\ge \ell+1\) conjecture. The argument of
Section~\ref{sec:proof-intermediate} exploits test directions indexed by vectors in
\(\ker M\). At the endpoint \(\dim\ker M=1\), however, these kernel directions alone
are no longer sufficient. 
We show that the vanishing of the generalized Schur complement of
\(Y_\star\) produces a canonical additional direction in
\((\ker M)^\perp\), which we call the \emph{Schur-companion direction}.
We expect that suitable higher-dimensional
analogues of these companion directions will play a role in proving the full
\(k\ge\ell+1\) conjecture.

Our main result in this section is the following.

\begin{theorem}
\label{thm:meq1-complete}
Assume \(L\) satisfies~\eqref{eq:F1}--\eqref{eq:F3}.
If \(m=p-k=1\) and
\[
    k\ge \ell+1,
\]
then every second-order critical point of~\eqref{eq:P} is globally optimal.
\end{theorem}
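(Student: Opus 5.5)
The plan is to redo the argument of Section~\ref{sec:proof-intermediate} exactly in the scalar case $m=1$, and to replace each lossy estimate by an exact identity. By Lemmas~\ref{lem:kgeqp} and~\ref{lem:rkdeficient} and the symmetry reduction~\eqref{eq:symmetry}, we may assume $Z$ is full rank and $Y,Y_\star\in\Sym(\cU)$. With $m=1$ we have $W=w$ (a unit vector), $\Omega=\omega\ge0$ and $K=\kappa=w^\top Y_\star w$, with $0\le\kappa\le\omega$ by Lemma~\ref{lem:first-order-blocks}. Here $w_i$ and $\tau_i=w_i^2$ are scalars. If $\kappa=0$, the case $K=0$ of the proof of Theorem~\ref{thm:intermediate} gives global optimality, so we assume $0<\kappa\le\omega$ and seek a contradiction. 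The probes are $\alpha=u\in\R^k$ with $\xi=1$. Then $s_\alpha(i)=w_i\,v_i^\top u$ and $P=\omega\sum_i w_i^2 v_iv_i^\top$.

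\textbf{Exact quadratic form.} The key simplification is that $\Pi_W=qq^\top$ is rank one, with $q_i:=w_i^2$. Since $\|q\|^2\le\sum_i w_i^2\,\tau_i\le 1-\eta<1$, Sherman--Morrison gives exactly
\[
(I-\Pi_W)^{-1}=I+\frac{qq^\top}{1-\|q\|^2}.
\]
Lemma~\ref{lem:exact-rank-one} then becomes an exact statement: for every $u\in\R^k$,
\[
0\le u^\top H u,\qquad H:=\omega^{-1}(M+P)-I_k+2\sum_i w_i^2v_iv_i^\top+\frac{2}{1-\|q\|^2}\,cc^\top,\qquad c:=\sum_i w_i^3 v_i .
\]
We will not trace $H$ over $\ker M$ as in Lemma~\ref{lem:traced-covariance}. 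Instead we test $H$ against a carefully chosen two-dimensional family of vectors $u$.

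\textbf{Schur-companion direction.} Write $Y_\star$ in the basis $[V,w]$ of $\cU$ as
\[
\begin{pmatrix}M&b\\ b^\top&\kappa\end{pmatrix},\qquad b:=V^\top Y_\star w .
\]
Let $d=\dim\ker M\ge k-\ell\ge1$. If $d\ge 2$, I expect a sharpened version of the kernel-averaging argument to suffice: the exact $H$ has smaller error terms than the bounds~\eqref{eq:traceN-P-bound}--\eqref{eq:traceN-error-bound}, and $\tr(\Omega^\dagger K)=\kappa/\omega\le1$. This case should be checked first, but I do not expect it to be the obstacle.

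The delicate case is $d=1$. Then $\rank M=k-1\ge\ell\ge\rank Y_\star$, so $\rank M=\rank Y_\star$. Since $Y_\star\succeq0$, this forces $b\in\im M$ and the generalized Schur complement to vanish:
\[
\kappa=b^\top M^\dagger b .
\]
Let $u_0$ span $\ker M$, and define the companion direction $u_1:=M^\dagger b\in(\ker M)^\perp$. Note that $u_1^\top M u_1=\kappa$. For $u=u_0+tu_1$, the condition $u^\top Hu\ge0$ for all $t$ says that the $2\times2$ Gram matrix $\bigl[u_a^\top H u_b\bigr]_{a,b\in\{0,1\}}$ is PSD, and in particular that its determinant is nonnegative. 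The remaining task is to show that this determinant is strictly negative when $\kappa>0$.

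\textbf{Expected obstacle.} To evaluate the entries I would use the remaining block of~\eqref{eq:exposingYstar}, namely the $V,w$ block
\[
b=V^\top(Y+S-\Gamma(S))w=-\omega\sum_i w_i^3 v_i=-\omega c .
\]
This is exactly the vector appearing in the Sherman--Morrison term. Consequently $u_1=-\omega M^\dagger c$, and both $u_1^\top Hu_1$ and $u_0^\top H u_1$ can be expressed through $\kappa=\omega^2 c^\top M^\dagger c$, the quantities $u_0^\top c$ and $u_0^\top P u_0$, and the frame bounds~\eqref{eq:sumvvtwwt}--\eqref{eq:viwinorm}. The main difficulty is organizing these terms so that the determinant inequality combined with $\kappa\le\omega$ and $\eta>0$ collapses to $\kappa\le0$. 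I would try first to optimize over $t$ explicitly, which amounts to a Schur complement in $t$, and then to bound the leftover cross terms by Cauchy--Schwarz in the $(1-\tau_i)^{-1}$-weighted inner product. That is the step most likely to need additional ideas.
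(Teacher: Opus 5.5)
Your setup agrees with the paper's. Sherman--Morrison for the rank-one $\Pi_W$ is the paper's Woodbury step. Your condition $H\succeq0$ is exactly Lemma~\ref{lem:m1-master}; your $c$ is the paper's $r$, and $\bar q:=1-\|q\|^2=1-\sum_i w_i^4$ is the paper's scalar $q$. And $u_1=M^\dagger b$ with $b=-\omega c$ is the Schur-companion direction. Using the full $2\times2$ positivity instead of its trace is also legitimate, and slightly stronger than what the paper uses.

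\textbf{The gap.} The proof stops exactly where the real work begins, and you expect the contradiction to come from the wrong place. The $w,w$-block of \eqref{eq:exposingYstar} gives $\kappa=\omega\bar q$ exactly, not merely $0<\kappa\le\omega$. Hence $u_1^\top c=-\kappa/\omega=-\bar q$, and every occurrence of $\omega$ and $\kappa$ cancels. With $\widetilde P:=\sum_i w_i^2 v_iv_i^\top$ and $\mathcal F:=\operatorname{span}\{u_0,u_1\}$, one checks $u^\top Hu=u^\top\bigl(3\widetilde P+\tfrac{3}{\bar q}cc^\top-I_k\bigr)u$ for all $u\in\mathcal F$. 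Likewise, if $d\ge2$, then $c\perp\ker M$, and positivity on a $2$-plane $\mathcal F\subseteq\ker M$ gives $\tr_{\mathcal F}(\widetilde P)\ge\frac23$. So nothing ``collapses to $\kappa\le0$''. After criticality is used, what remains to refute is a purely geometric statement about the atoms relative to the splitting $[V,w]$. The missing ingredient is the frame obstruction of Lemma~\ref{lem:m1-coordinate}: for every $2$-plane $\mathcal F\subseteq\R^k$,
$\tr_{\mathcal F}(\widetilde P)+\|\Proj_{\mathcal F}c\|^2/\bar q<\frac23$.
This contradicts the trace of your $2\times2$ condition, and it also disposes of $d\ge2$.

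\textbf{Why this lemma is not routine.} It is the heart of the proof, not a matter of organizing terms. The paper proceeds in four steps:
\begin{enumerate}
\item It completes the atoms to a tight frame, showing the left-hand side is nondecreasing as atoms are adjoined.
\item It uses the cross-block consequence $\sum_i w_iv_i=0$ of tightness to get $\|c\|^2\le\sum_i w_i^6-(\sum_i w_i^4)^2$.
\item It uses Ky Fan and Cauchy interlacing to get $\tr_{\mathcal F}(\widetilde P)\le\bar q-\sum_{i\le N-3}\theta_{(i)}$, where $\theta_{(1)}\le\cdots\le\theta_{(N)}$ are the sorted $w_i^2$.
\item It closes with the simplex inequality of Lemma~\ref{lem:simplex-ineq} and its equality analysis.
\end{enumerate}

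The bounds you propose to work with, \eqref{eq:sumvvtwwt}--\eqref{eq:viwinorm}, discard the off-diagonal block of \eqref{eq:F1}. Without that block this inequality is false. Take three atoms with $w_i^2=\frac13$ and signs $(+,+,-)$, and let the $v_i$ be three vectors in $\mathcal F$ at mutual angles $2\pi/3$ with $\|v_i\|^2=\frac23-\eta$. These satisfy \eqref{eq:sumvvtwwt}--\eqref{eq:viwinorm}, yet give $\tr_{\mathcal F}(\widetilde P)+\|c\|^2/\bar q=\frac23+\frac4{27}-\frac{11}{9}\eta$, which exceeds $\frac23$ for small $\eta$.

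\textbf{The case $d\ge2$.} This case is also left open. Reusing \eqref{eq:traceN-P-bound}--\eqref{eq:traceN-error-bound} only yields $d\le3(1-\eta)$, which does not exclude $d=2$. What is needed is $\tr_{\mathcal F}(\widetilde P)<\frac23$ for every $2$-plane. Besides the frame lemma, this follows from the elementary bound $\theta y\le\frac49\theta+\frac19y$ on $\{\theta,y\ge0,\ \theta+y\le1\}$. Apply it with $\theta_i=w_i^2$ and $y_i=\|\Proj_{\mathcal F}v_i\|^2$, using $\sum_i\theta_i\le1$ and $\sum_i y_i\le2$. Strictness comes from $\theta_i+y_i\le1-\eta$.
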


For complete-graph EDG, Theorem~\ref{thm:meq1-complete} applies when
\(k=n-2\). This has a useful small-\(n\) interpretation. Fix \(k=\ell+1\). When \(n\le\ell+2\), we have
\(k\ge n-1=p\), so benignness already follows from the convex regime of
Lemma~\ref{lem:kgeqp}. The first case beyond this convex regime is therefore $n=\ell+3$.
This is precisely the case \(m=1\), since $1 = m = n - 1 - k = n - 2 - \ell$.  Hence,
Theorem~\ref{thm:meq1-complete} proves benignness at the conjectured threshold $k = \ell + 1$
in the first nonconvex case not covered by Lemma~\ref{lem:kgeqp}, namely $n = \ell + 3$.

The proof of Theorem~\ref{thm:meq1-complete} is completed at the end of
Section~\ref{sec:meq1-frame-lemma}.

\subsection{Block notation and 2-criticality}
\label{sec:meq1-blocks}

We begin by introducing notation valid for arbitrary
\(
m=p-k.
\)
Let \(Z\) be a full-rank non-global second-order critical point of~\eqref{eq:P},
with \(\im(Z)\subseteq\mathcal U\). By Lemma~\ref{lem:stress-psd},
\[
    S=L(Y_\star-Y)
    =
    W\Omega W^\top,
    \qquad
    \Omega\succeq0.
\]
For each atom \(a_i\), \(i=1,\ldots,N\), recall we define
\[
    v_i:=V^\top a_i\in\R^k,
    \qquad
    w_i:=W^\top a_i\in\R^m,
\]
as in~\eqref{eq:defviwi}. 
Also, recall we define
\begin{align*} 
P&:=V^\top\Gamma(S)V=\sum_i d_i v_i^{} v_i^\top, \qquad \qquad d_i:=w_i^\top\Omega w_i \\ K&:=W^\top Y_\star W = \Omega-\sum_i d_i w_i^{} w_i^\top, \qquad M:=V^\top Y_\star V=\Sigma-P. 
\end{align*}
Taking the \(V,W\)-block of the identity
\(Y_\star=Y+S-\Gamma(S)\) from~\eqref{eq:exposingYstar} yields
\begin{align}
    C
    :=V^\top Y_\star W
    =
    -V^\top\Gamma(S)W
    =
    -\sum_i d_i v_i w_i^\top.
    \tag{$V,W$-block}
    \label{eq:meq1-crossblock-general}
\end{align}
Consequently,
\[
    [V,W]^\top Y_\star [V,W]
    =
    \begin{bmatrix}
        M & C\\
        C^\top & K
    \end{bmatrix}.
\]

Since \(Y_\star\succeq0\), the generalized Schur complement
\begin{align}
    K_{\rm com}
    :=
    K-C^\top M^\dagger C
    \tag{Schur compl}
    \label{eq:meq1-Kres-general}
\end{align}
is positive semidefinite~\citep[App.~A.5.5]{boyd2004convex}, and satisfies
\begin{align}\label{eq:schur-rank}
    \rank(Y_\star)
    =
    \rank(M)
    +
    \rank(K_{\rm com}).
    \tag{Schur rank}
\end{align}
Furthermore, if \(u\in\ker M\), then positivity of \(Y_\star\) implies
\begin{align}
    C^\top u=0.
    \label{eq:meq1-cross-kernel}
\end{align}

\subsubsection*{2-criticality and Woodbury's identity}

For
general \(m\), define the linear operator
\[
    \mathcal D_W:\Sym(m)\to\R^N,
    \qquad
    (\mathcal D_WH)_i:= a_i^\top W H W^\top a_i = w_i^\top H w_i,
\]
whose adjoint is
\[
    \mathcal D_W^\ast h
    :=
    \sum_i h_i w_iw_i^\top.
\]
Then
\(
\Pi_W=\mathcal D_W^{} \mathcal D_W^\ast,
\)
and, by Woodbury's identity~\citep[\S0.7.4]{Horn1994},
\begin{align}
    (I-\Pi_W)^{-1}
    =
    I
    +
    \mathcal D_W
    (I-\mathcal D_W^\ast\mathcal D_W^{})^{-1}
    \mathcal D_W^\ast.
    \tag{Woodbury}
    \label{eq:meq1-woodbury}
\end{align}
The inverse exists by Lemma~\ref{lem:best-response}.

By Lemma~\ref{lem:exact-rank-one}, second-order criticality implies that for every \(u\in\R^k\) and every
\(\xi\in\im\Omega\),
\begin{align}\label{eq:meq1-rank-one-before-special}
    0
    \le
    (\xi^\top\Omega^\dagger \xi)\,u^\top(M+P)u
    -
    \norm{\xi}^2\norm{u}^2
    +
    2s_\alpha^\top (I-\Pi_W)^{-1}s_\alpha \qquad \text{where} \qquad \alpha=u\xi^\top.
\end{align}
Using~\eqref{eq:meq1-woodbury}, the final term of the right-hand side can be written as
\begin{align}\label{eq:meq1-rank-one-before-special-2}
    2\norm{s_\alpha}^2
    +
    2s_\alpha^\top
    \mathcal D_W\,(I-\mathcal D_W^\ast\mathcal D_W)^{-1}\mathcal D_W^\ast s_\alpha.
\end{align}
In our earlier analysis, this final term was bounded by
\(
\sum_i s_\alpha(i)^2/(1-\tau_i).
\)
When \(m=1\), however, the operator
\(
I - \mathcal D_W^\ast\mathcal D_W^{}
\)
is simply multiplication by a scalar, which allows us to evaluate the Woodbury correction exactly rather than merely bounding it. This refinement is a key simplification in the \(m=1\) case.

\subsubsection*{Specialization to $m=1$}

We now specialize~\eqref{eq:meq1-rank-one-before-special} and~\eqref{eq:meq1-rank-one-before-special-2} to \(m=1\). 
In this case, we take $\xi = 1$, so that $\alpha = u \xi^\top = u$.
Additionally, \(\Omega\) is a nonnegative scalar, \(W\) is a unit vector in \(\cU\), and each
\(
    w_i:=W^\top a_i
\)
is a scalar. Since we are considering a non-global point $Z$, we know \(\Omega>0\); otherwise
\(S=0\), and hence \(Y=Y_\star\).

Define the following matrix, vector and scalar, which depend only on $\{a_i\}_i, V$ and $W$:
\begin{align}\label{eq:meq1-def-ptilde-r-q}
    \widetilde P:=\sum_i w_i^2v_i^{} v_i^\top \in \mathrm{PSD}(k),
    \qquad
    r:=\sum_i w_i^3 v_i \in \mathbb{R}^k,
    \qquad
    q:=1-\sum_i w_i^4 \in \mathbb{R}.
    \tag{def-$\tilde P, r, q$}
\end{align}
The scalar \(q\) is positive. Indeed, by~\eqref{eq:F1} and~\eqref{eq:F2},
\[
    \sum_i w_i^2
    =
    W^\top\left(\sum_i a_i a_i^\top\right)W
    \le1,
    \qquad
    \max_i w_i^2\le 1-\eta,
\]
so
\[
    \sum_i w_i^4
    \le
    \left(\max_i w_i^2\right)\sum_i w_i^2
    \le
    1-\eta<1.
\]
In the notation~\eqref{eq:meq1-def-ptilde-r-q}, the block identities become
\begin{align}\label{eq:meq1-blocks}
    M=\Sigma-\Omega\widetilde P,
    \qquad
    P=\Omega\widetilde P,
    \qquad
    C=-\Omega r,
    \qquad
    K=\Omega q.
\end{align}

Since $m=1$ and \(\xi=1\), we have
\(
    s_\alpha(i)=(v_i^\top u)w_i,
\)
and therefore
\[
    \mathcal D_W^\ast s_\alpha
    =
    \sum_i s_\alpha(i) w_i^2
    =
    \sum_i (v_i^\top u)w_i^3
    =
    u^\top r.
\]
Moreover, \(\mathcal D_W^\ast\mathcal D_W^{}\) is multiplication by
\(
    \sum_i w_i^4=1-q.
\)
Thus
\[
    (I-\mathcal D_W^\ast\mathcal D_W)^{-1}=\frac1q.
\]
Finally,
\[
    \norm{s_\alpha}^2
    =
    \sum_i (v_i^\top u)^2 w_i^2
    =
    u^\top\widetilde P u.
\]
Substituting these identities into~\eqref{eq:meq1-rank-one-before-special} and~\eqref{eq:meq1-rank-one-before-special-2} yields the following consequence of 2-criticality.

\begin{lemma}
\label{lem:m1-master}
In the \(m=1\) setting above,
\begin{align}\label{eq:m1-master}
    u^\top(I-3\widetilde P)u
    \le
    \frac1\Omega u^\top Mu
    +
    2\frac{(u^\top r)^2}{q}
    \qquad
    \forall u\in\R^k.
\end{align}
\end{lemma}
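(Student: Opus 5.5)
The plan is to specialize inequality~\eqref{eq:meq1-rank-one-before-special} to \(m=1\), \(\xi=1\), \(\alpha=u\), substitute the scalar quantities computed just before the lemma, and divide by \(\Omega>0\). No new ideas are needed; the work is bookkeeping of the identities already recorded.

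First, with \(\xi=1\) and \(\Omega>0\) a scalar, we have \(\xi^\top\Omega^\dagger\xi = 1/\Omega\) and \(\norm{\xi}^2=1\). Using~\eqref{eq:meq1-blocks}, \(P=\Omega\widetilde P\), so
\[
(\xi^\top\Omega^\dagger\xi)\,u^\top(M+P)u = \tfrac1\Omega u^\top Mu + u^\top\widetilde P u .
\]

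Second, evaluate the last term exactly via~\eqref{eq:meq1-woodbury}. We have \(2s_\alpha^\top(I-\Pi_W)^{-1}s_\alpha\), and the expression~\eqref{eq:meq1-rank-one-before-special-2} equals
\[
2\norm{s_\alpha}^2 + 2(\mathcal D_W^\ast s_\alpha)^2/q .
\]
Here \(\mathcal D_W\) maps a scalar \(h\) to \((w_i^2 h)_i\), so the middle factor \(\mathcal D_W(\cdot)\mathcal D_W^\ast\) contributes \((\mathcal D_W^\ast s_\alpha)\cdot\tfrac1q\cdot(\mathcal D_W^\ast s_\alpha)\). Substituting \(\norm{s_\alpha}^2=u^\top\widetilde Pu\) and \(\mathcal D_W^\ast s_\alpha=u^\top r\) gives
\[
2u^\top\widetilde Pu + 2(u^\top r)^2/q .
\]

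Third, assemble the pieces. The inequality becomes
\[
0\le \tfrac1\Omega u^\top Mu + u^\top\widetilde Pu - \norm{u}^2 + 2u^\top\widetilde Pu + 2(u^\top r)^2/q .
\]
Moving \(\norm{u}^2 - 3u^\top\widetilde Pu = u^\top(I-3\widetilde P)u\) to the left yields~\eqref{eq:m1-master}.

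The only point requiring care is confirming that \(\xi=1\in\im\Omega\), which holds because \(\Omega>0\) at a non-global point. One must also check that the Woodbury form is legitimate, which requires \(q>0\); this was already verified. There is no real obstacle beyond tracking the factor \(\Omega\): \(M\) enters with \(1/\Omega\), while \(P\) loses its \(\Omega\) through the cancellation.
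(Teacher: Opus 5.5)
Your proposal is correct and follows the paper's derivation: you specialize the rank-one 2-criticality inequality to $m=1$, $\xi=1$, $\alpha=u$, and substitute $P=\Omega\widetilde P$, $\norm{s_\alpha}^2=u^\top\widetilde P u$, $\mathcal D_W^\ast s_\alpha=u^\top r$ and $(I-\mathcal D_W^\ast\mathcal D_W)^{-1}=1/q$ into the Woodbury form, then rearrange. One cosmetic point: no division by $\Omega$ is actually needed, because $\xi^\top\Omega^\dagger\xi=1/\Omega$ already supplies that factor, as your own second step shows.
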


\subsection{Schur-companion directions}
\label{sec:schur-companion}

We next derive a key consequence of second-order criticality in the case \(m=1\). The
proof combines two complementary families of test directions $u$. The first consists of
vectors in \(\ker M\), which were already exploited in Section~\ref{sec:proof-intermediate}. The
second is a distinguished vector orthogonal to \(\ker M\), forced by the vanishing of the generalized Schur complement of \(Y_\star\). We refer to this vector as the \emph{Schur companion
direction}.

\begin{lemma}
\label{lem:endpoint-plane}
Suppose \(m=1\), \(k\ge\ell+1\), and \(Z\) is a full-rank non-global second-order
critical point. Then there exists a two-dimensional subspace
\(\mathcal F\subseteq\R^k\) such that
\begin{align}\label{eq:endpoint-plane-lower}
    \tr_{\mathcal F}(\widetilde P)
    +
    \frac{\|\Proj_{\mathcal F}r\|^2}{q}
    \ge
    \frac23,
    \tag{companion plane}
\end{align}
where \(\Proj_{\mathcal F} \colon \mathbb{R}^k \to \mathcal F\) denotes orthogonal projection onto \(\mathcal F\).
\end{lemma}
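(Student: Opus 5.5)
The plan is to build \(\mathcal F\) from test directions \(u\) in Lemma~\ref{lem:m1-master}, using the block identities~\eqref{eq:meq1-blocks}. Throughout, \(\Omega>0\) (the point is non-global) and \(q>0\), so \(K=\Omega q>0\) and \(C=-\Omega r\). By~\eqref{eq:dimNlower}, \(d=\dim\ker M\ge k-\ell\ge1\).

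\emph{Step 1 (kernel directions).} For \(u\in\ker M\), identity~\eqref{eq:meq1-cross-kernel} gives \(C^\top u=0\). Since \(\Omega>0\), this means \(u^\top r=0\). Then~\eqref{eq:m1-master} reduces to \(u^\top(I-3\widetilde P)u\le0\). Hence every unit \(n\in\ker M\) satisfies
\[
n^\top\widetilde P n\ge\tfrac13, \qquad n^\top r=0 .
\]

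\emph{Step 2 (case split on \(\operatorname{rank}M\)).} If \(\operatorname{rank}M\le\ell-1\), then \(d\ge2\). We take \(\mathcal F\) to be any two-dimensional subspace of \(\ker M\). Summing Step 1 over an orthonormal basis of \(\mathcal F\) gives \(\tr_{\mathcal F}(\widetilde P)\ge\frac23\), and~\eqref{eq:endpoint-plane-lower} follows because the projection term is nonnegative.

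Otherwise \(\operatorname{rank}M=\ell\). Since \(\operatorname{rank}Y_\star\le\ell\), identity~\eqref{eq:schur-rank} forces \(K_{\rm com}=0\), that is, \(K=C^\top M^\dagger C\). With \(m=1\) this reads
\[
\Omega q=\Omega^2\, r^\top M^\dagger r, \qquad\text{so}\qquad r^\top M^\dagger r=q/\Omega>0 .
\]

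\emph{Step 3 (Schur-companion direction).} Set \(u_c:=M^\dagger r\). It lies in \((\ker M)^\perp\), and it is nonzero because \(r^\top M^\dagger r>0\). Compute
\[
u_c^\top M u_c=r^\top M^\dagger r=q/\Omega, \qquad u_c^\top r=q/\Omega .
\]
Substituting into~\eqref{eq:m1-master}, the right-hand side becomes \(q/\Omega^2+2q/\Omega^2=3q/\Omega^2=3(u_c^\top r)^2/q\). Normalizing \(e:=u_c/\|u_c\|\) and using homogeneity gives \(1-3e^\top\widetilde P e\le 3(e^\top r)^2/q\), i.e.
\[
e^\top\widetilde P e+\frac{(e^\top r)^2}{q}\ge\frac13 .
\]

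\emph{Step 4 (assemble the plane).} Pick a unit \(n\in\ker M\), which exists since \(d\ge1\), and let \(\mathcal F=\operatorname{span}\{n,e\}\). This is two-dimensional and orthonormal because \(e\perp\ker M\). Then
\[
\tr_{\mathcal F}(\widetilde P)=n^\top\widetilde P n+e^\top\widetilde P e, \qquad \|\Proj_{\mathcal F}r\|^2\ge(e^\top r)^2 .
\]
Adding the bound from Step 1 and the bound from Step 3 gives \(\frac13+\frac13=\frac23\), which is~\eqref{eq:endpoint-plane-lower}.

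The main obstacle is Step 3: identifying the companion vector \(M^\dagger r\) and noticing that \(K_{\rm com}=0\) makes both terms on the right of~\eqref{eq:m1-master} collapse to exactly the same quantity \(q/\Omega^2\). The rest is bookkeeping, with some care to use \(\Omega>0\) and \(q>0\) where dividing.
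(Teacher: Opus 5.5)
Your proof is correct and follows the paper's argument: kernel vectors of $M$ settle the case where the kernel has dimension at least two, and when $\rank M=\ell$ the vanishing Schur complement produces the companion direction $M^\dagger r$. This is a scalar multiple of the paper's $u_\perp=M^\dagger C=-\Omega M^\dagger r$, and you pair it with a unit kernel vector to span $\mathcal F$, as the paper does. The differences are cosmetic: you split on $\rank M$ rather than on $d$, which is harmless since the Schur step works whenever $\rank M=\ell$, and you normalize $u_c$ directly instead of using the paper's Cauchy--Schwarz step; both reduce to the same inequality $(e^\top r)^2\le\|\Proj_{\mathcal F}r\|^2$.
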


\begin{proof}
Recalling~\eqref{eq:defNspace} and~\eqref{eq:dimNlower}, we have $\mathcal N:=\ker M$, $d:=\dim\mathcal N$, and $d\ge k-\ell \ge 1.$

We distinguish two cases according to the nullity of \(M\).
The first case is straightforward: when \(\dim\ker M\ge2\), we simply choose
\(\mathcal F\subseteq\ker M\). The second case is more delicate. In that case
\(\ker M\) is one-dimensional, so we augment it with the Schur companion
direction \(u_\perp\) to obtain the required two-dimensional subspace.

\medskip

\noindent
\textbf{Case 1: \(d\ge2\).}
For every
\(u\in\mathcal N\), equation~\eqref{eq:meq1-cross-kernel} together with
\(C=-\Omega r\) from~\eqref{eq:meq1-blocks} implies
\(
    u^\top r=0.
\)
Inequality~\eqref{eq:m1-master} therefore reduces to
\[
    u^\top(I-3\widetilde P)u\le0 \qquad \forall u \in \mathcal N.
\]
Let \(\mathcal F\subseteq\mathcal N\) be any two-dimensional subspace.
Summing over an orthonormal basis of \(\mathcal F\) yields
\[
    \tr_{\mathcal F}(I-3\widetilde P)\le0.
\]
Since \(\dim\mathcal F=2\), this is equivalent to
\[
    2-3\tr_{\mathcal F}(\widetilde P)\le0,
\]
or equivalently,
\(
    \tr_{\mathcal F}(\widetilde P)\ge\frac23.
\)
Thus~\eqref{eq:endpoint-plane-lower} holds.

\medskip

\noindent
\textbf{Case 2: \(d=1\).}
Then \(d = k - \rank(M) \ge k-\ell \geq 1\) forces
\[
    k=\ell+1, \qquad \rank M = \ell.
\]
By~\eqref{eq:schur-rank},
$$\rank(K_{\rm com}) = \rank(Y_\star) - \rank(M) \leq \ell - \ell = 0.$$
Therefore, the generalized Schur complement --- which is a scalar since $m=1$ --- must vanish:
\begin{align}\label{eq:meq1-scalar-schur-vanish}
    K_{\rm com} = K-C^\top M^\dagger C=0.
\end{align}

Define the \emph{Schur companion direction}
\[
    u_\perp:=M^\dagger C \in \mathbb{R}^k.
    \tag{Schur companion}
\]
Note that \(u_\perp\in\im M=(\ker M)^\perp\).
Moreover, the identities~\eqref{eq:meq1-scalar-schur-vanish} and~\eqref{eq:meq1-blocks} give
\[
    u_\perp^\top M u_\perp
    =
    C^\top M^\dagger C
    =
    K
    =
    \Omega q.
\]
Since \(C=-\Omega r\), we also obtain
\[
    u_\perp^\top r=-q.
\]
Since \(q>0\), this implies \(u_\perp\neq0\).

Applying~\eqref{eq:m1-master} to \(u_\perp\) yields
\begin{align}\label{eq:thisguyatwo}
    u_\perp^\top(I-3\widetilde P)u_\perp
    &\le
    \frac1\Omega u_\perp^\top M u_\perp
    +
    2\frac{(u_\perp^\top r)^2}{q} =
    q+2q
    =
    3q.
\end{align}
Let \(u\) be a unit vector spanning \(\ker M\), and define
\[
    \mathcal F:=\operatorname{span}\{u,u_\perp\}.
\]
Since \(u\) and \(u_\perp\) are orthogonal,
\[
    \tr_{\mathcal F}(I-3\widetilde P)
    =
    u^\top(I-3\widetilde P)u
    +
    \frac{
        u_\perp^\top(I-3\widetilde P)u_\perp
    }{
        \|u_\perp\|^2
    }.
\]
The first term is nonpositive as argued in \textbf{Case 1}, while the second is bounded by
\(3q/\|u_\perp\|^2\), due to~\eqref{eq:thisguyatwo}. Hence
\begin{align}\label{eq:dodododo}
    \tr_{\mathcal F}(I-3\widetilde P)
    \le
    \frac{3q}{\|u_\perp\|^2}.
\end{align}

By Cauchy--Schwarz and \(u_\perp^\top r=-q\),
\[
    q^2 = (u_\perp^\top r)^2
    \le
    \|u_\perp\|^2
    \|\Proj_{\mathcal F}r\|^2,
\]
or equivalently,
\(
    \frac{3q}{\|u_\perp\|^2}
    \le
    3\frac{\|\Proj_{\mathcal F}r\|^2}{q}.
\)
Combining this with~\eqref{eq:dodododo} gives
\[
    \tr_{\mathcal F}(I-3\widetilde P)
    \le
    3\frac{\|\Proj_{\mathcal F}r\|^2}{q}.
\]
Since \(\dim\mathcal F=2\),
\[
    2-3\tr_{\mathcal F}(\widetilde P)
    \le
    3\frac{\|\Proj_{\mathcal F}r\|^2}{q},
\]
which rearranges to~\eqref{eq:endpoint-plane-lower}.
\end{proof}

\subsection{The frame obstruction}
\label{sec:meq1-frame-lemma}

We now show that the companion plane produced by Lemma~\ref{lem:endpoint-plane}
cannot exist, thereby showing that $Z$ cannot be simultaneously full-rank, non-global and second-order critical if $m=1$ and $k \geq \ell + 1$.
The key ingredient is the following purely geometric lemma, which depends only
on the frame atoms and the decomposition \([V,W]\), and not on first- or
second-order criticality.

\begin{lemma}
\label{lem:m1-coordinate}
Let $\cU\subseteq\R^n$ be a $p$-dimensional subspace, let $k \geq 2$, and assume $m = p - k = 1$.
Assume \([V,W]\) is an orthonormal basis of \(\cU\), where $V \in \mathbb{R}^{n \times k}, W \in \mathbb{R}^n$.
Also assume $a_1, \ldots, a_N \in \cU$ satisfy~\eqref{eq:F1}--\eqref{eq:F2}.
Define $\widetilde P, r, q$ as in~\eqref{eq:meq1-def-ptilde-r-q}.

For every two-dimensional subspace \(\mathcal F\subseteq\R^k\),
\begin{align}\label{eq:meq1-two-plane}
    \tr_{\mathcal F}(\widetilde P)
    +
    \frac{\norm{\operatorname{Proj}_{\mathcal F}r}^2}{q}
    <
    \frac23.
\end{align}
\end{lemma}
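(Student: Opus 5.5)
The plan is to reduce \eqref{eq:meq1-two-plane} to an extremal inequality for a Bessel system in \(\R^3\), and then to bound each term using only \eqref{eq:F1}--\eqref{eq:F2}. Fix an orthonormal basis \(f_1,f_2\) of \(\mathcal F\), and write \(x_i:=(f_1^\top v_i,\,f_2^\top v_i)\in\R^2\) and \(b_i:=(x_i,w_i)\in\R^3\). Compressing \(\sum_i a_ia_i^\top\preceq P_{\cU}\) to the three-dimensional subspace spanned by \(Vf_1,Vf_2,W\) gives
\[
\sum_i b_i^{} b_i^\top\preceq I_3,
\qquad
\norm{b_i}^2\le\norm{a_i}^2\le 1-\eta .
\]
In this notation,
\[
\tr_{\mathcal F}(\widetilde P)=T:=\sum_i w_i^2\norm{x_i}^2,
\qquad
\Proj_{\mathcal F}r\cong\sum_i w_i^3x_i,
\qquad
q=1-A,\quad A:=\sum_i w_i^4 .
\]
So it suffices to show \(T+\norm{\sum_i w_i^3x_i}^2/(1-A)<2/3\) for every finite family \(b_i\) satisfying these two constraints. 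The whole problem then lives in \(\R^3\) and no longer depends on \(k\), \(V\) or \(W\).

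For the \(r\)-term, fix a unit vector \(e\in\R^2\) aligned with \(\sum_i w_i^3x_i\) and apply Cauchy--Schwarz to the splitting \(w_i^3(e^\top x_i)=w_i^2\cdot w_i(e^\top x_i)\):
\[
\Big\|\sum_i w_i^3x_i\Big\|^2\le A\,T_e,
\qquad
T_e:=\sum_i w_i^2(e^\top x_i)^2\le T .
\]
The off-diagonal constraint gives more. The analogue of Lemma~\ref{lem:gamma-properties}~\emph{(iv)} for the atoms \(b_i\) says
\[
\sum_i (b_i^\top Xb_i)^2\le(1-\eta)\fro{X}^2
\qquad\text{for all } X\in\Sym(3).
\]
Taking \(X=\begin{bmatrix}0&e\\ e^\top&0\end{bmatrix}\) yields \(T_e\le(1-\eta)/2\). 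Taking \(X=\Diag(\lambda,\lambda,\mu)\) yields
\[
\lambda^2\sum_i\norm{x_i}^4+2\lambda\mu\,T+\mu^2A\le(1-\eta)(2\lambda^2+\mu^2).
\]
I would combine this with \(\sum_i\norm{x_i}^4\ge T^2/A\), which is again Cauchy--Schwarz, and with the pointwise bound \(T\le\sum_i w_i^2(1-\eta-w_i^2)\le 1-A-\eta\sum_i w_i^2\). The latter comes from \(\norm{x_i}^2\le 1-\eta-w_i^2\) together with \(\sum_i w_i^2\le1\).

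The last step is to optimize. I would use mixed test matrices \(X=\Diag(\lambda,\lambda,\mu)+\gamma\begin{bmatrix}0&e\\ e^\top&0\end{bmatrix}\), which couple \(T\), \(T_e\) and \(A\) in one inequality, and choose \((\lambda,\mu,\gamma)\) depending on \(A\) so that the resulting bound reads \(T+AT_e/(1-A)\le\tfrac23(1-\eta)\) or similar. The strict inequality \(<2/3\) would then come from \(\eta>0\).

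I expect this optimization to be the main obstacle. The crude bounds above do not close the gap uniformly in \(A\). For small \(A\), the bound \(T\lesssim\sqrt{3A}-A\) suffices. For \(A\) of moderate size, roughly \(A\approx0.3\), neither \(T\le1-A\) nor the diagonal test matrix alone gives \(2/3\). The real content is therefore a simultaneous use of the diagonal and off-diagonal frame constraints, possibly after reducing to extremal configurations such as atoms supported on three orthogonal directions, where the frame bound is tight. If a single test matrix \(X\) does not suffice, I would fall back to a convexity or extreme-point argument over the set of admissible measures on \(\{b\in\R^3:\norm{b}^2\le1\}\) with second moment at most \(I_3\).
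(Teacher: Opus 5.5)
Your compression to $\R^3$ is sound and loses nothing: the reduced claim is exactly the $k=2$ case of the lemma. The inequalities you derive from the test matrices are also correct. But the proposal stops before the decisive estimate, and the route you sketch for it cannot close. The problem is the step $\|\sum_i w_i^3x_i\|^2\le A\,T_e$, which discards the cancellation that makes the lemma true at its extremal configuration.

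Take $b_i=\sqrt{1-\eta}\,u_i$ for $i=1,2,3$, where $u_1,u_2,u_3$ is an orthonormal basis of $\R^3$ whose squared third coordinates $\theta_i$ are close to, but not all equal to, $\tfrac13$. This family is admissible, since $\sum_i b_i^{}b_i^\top=(1-\eta)I_3$ and $\|b_i\|^2=1-\eta$. Because $\sum_i x_i^{}x_i^\top=(1-\eta)I_2$, you get $T_e\approx\tfrac13$ for every unit $e$, together with $T\approx\tfrac23$ and $A\approx\tfrac13$. So $T+AT_e/(1-A)\to\tfrac23+\tfrac16=\tfrac56$ as $\eta\to0$ and $\theta\to(\tfrac13,\tfrac13,\tfrac13)$. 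Hence the target ``$T+AT_e/(1-A)\le\tfrac23(1-\eta)$'' is false, and no choice of $(\lambda,\mu,\gamma)$, nor any bound routed through $AT_e$, can produce it.

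The true left side of \eqref{eq:meq1-two-plane} tends to $\tfrac23$ here only because $\sum_i w_ix_i=0$. That identity gives $r=\sum_i w_i(w_i^2-c)x_i$ for every constant $c$, so $r$ is nearly zero. The lemma has no slack at this configuration, so any proof must retain this second-moment cancellation. Your fourth-order test-matrix inequalities do not see it, and you have not shown that any combination of them with $T\le 1-A$ controls the $r$-term.

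This cancellation is the missing idea, and the paper obtains it by passing to tight frames.
\begin{itemize}
\item It adjoins atoms of squared norm at most $1-\eta$ so that $\sum_i a_i^{}a_i^\top=P_{\cU}$.
\item It checks that $\tr_{\mathcal F}(\widetilde P)+\|\Proj_{\mathcal F}r\|^2/q$ can only increase when an atom is added; the increment is a perfect square divided by $q(q-\tilde w^4)>0$.
\item In the tight case, $\sum_i v_i^{}v_i^\top=I_k$ and $\sum_i w_iv_i=0$ give $\|r\|^2\le\chi_3-\chi_2^2$, which vanishes when the $\theta_i$ are equal.
\item Ky Fan and Cauchy interlacing, applied to $\Diag(\theta)$ compressed to $w^\perp$, give $\tr_{\mathcal F}(\widetilde P)\le q-\sum_{i\le N-3}\theta_{(i)}$.
\item A scalar inequality on the simplex then bounds the sum by $\tfrac23$. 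Equality occurs only at $\theta=e_j$ or at three entries equal to $\tfrac13$, and \eqref{eq:F2} rules out both.
\end{itemize}
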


\begin{proof}
We first prove the result under the tight-frame assumption
\begin{align}\label{eq:tight-F1}
\tag{tight F1}
    \sum_i a_i^{} a_i^\top=P_{\cU},
\end{align}
which holds for the complete-graph EDG frame~\eqref{eq:EDGinverseframeconditions}.
In fact, in the tight case we shall prove the stronger inequality
\begin{align}\label{eq:meq1-tight-stronger}
    \tr_{\mathcal F}(\widetilde P)
    +
    \frac{\norm{r}^2}{q}
    <
    \frac23.
\end{align}

\paragraph{Notation.} Let
\begin{align}\label{defthetaetal}\tag{def-$\theta, \chi_2, \chi_3$}
    \theta_i:=w_i^2,
    \qquad
    \chi_2:=\sum_i \theta_i^2,
    \qquad
    \chi_3:=\sum_i \theta_i^3,
    \qquad
    q=1-\chi_2.
\end{align}
Tightness~\eqref{eq:tight-F1} along with $V^\top V = I_k, W^\top W = 1, V^\top W = 0$ gives
\begin{align}\label{eq:yetanotheronethree}
    \sum_i \theta_i=1,
    \qquad
    \sum_i v_i^{} v_i^\top=I_k,
    \qquad
    \sum_i w_i v_i=0.
\end{align}

\paragraph{Bounding $\tr_{\mathcal F}(\widetilde P)$.} Let \(U\in\R^{k\times2}\) have orthonormal columns spanning \(\mathcal F\). Let \({\Y} \in\R^{N\times2}\) have rows \(a_i^\top V U = v_i^\top U\). Then, by~\eqref{eq:yetanotheronethree},
\begin{align}\label{eq:aboutcalY}
    {\Y}^\top {\Y} = U^\top\bigg(\sum_i v_i^{} v_i^\top \bigg) U = I_2,
    \qquad
    {\Y}^\top w = \sum_i w_i y_i =0,
\end{align}
where \(w := (w_i)_i\in\R^N\) is a unit vector by~\eqref{eq:yetanotheronethree}. Also,
\[
    \tr_{\mathcal F}(\widetilde P)
    =
    \tr\!\left(\sum_i w_i^2 \cdot U^\top v_i^{} v_i^\top U\right)
    =
    \tr\!\left({\Y}^\top\Diag(\theta_i){\Y} \right).
\]

Let \(D_\theta:=\Diag(\theta_i) \in \Sym(N)\). 
Let \(w^\perp := \{x \in \mathbb{R}^N : x^\top w = 0\}\), and let $\mathcal W \in \mathbb{R}^{N \times N-1}$ have orthonormal columns spanning $w^\perp$.
Since \({\Y}\) has orthonormal columns contained in
\(w^\perp\) by~\eqref{eq:aboutcalY}, Ky Fan's variational principle~\citep[Cor.~4.3.39]{Horn1994} gives
\begin{align}\label{eq:yetanotheronefour}
    \tr_{\mathcal F}(\widetilde P)
    \le
    \mu_{N-2}+\mu_{N-1} = \tr({\mathcal W}^\top D_\theta {\mathcal W}) - (\mu_1 + \cdots + \mu_{N-3}),
\end{align}
where \(\mu_1\le\cdots\le\mu_{N-1}\) are the eigenvalues of ${\mathcal W}^\top D_\theta {\mathcal W}$, which is the compression of \(D_\theta\) to
\(w^\perp\). The trace of this compression is, using~\eqref{eq:yetanotheronethree},
\begin{align}\label{eq:yetanotheronefive}
    \tr({\mathcal W}^\top D_\theta {\mathcal W}) 
    = 
    \tr\big(D_\theta(I - w w^\top)\big) = \tr(D_\theta)-w^\top D_\theta w
    =
    1-\chi_2=q.
\end{align}

Let \(\theta_{(1)}\le\cdots\le\theta_{(N)}\) be the nondecreasing rearrangement of the
\(\theta_i\)'s.
By Cauchy interlacing~\citep[Cor.~4.3.17]{Horn1994}, the sum of the \(N-3\) smallest eigenvalues of ${\mathcal W}^\top D_\theta {\mathcal W}$ is at least $\sum_{i=1}^{N-3}\theta_{(i)}$.
Combining this with~\eqref{eq:yetanotheronefour} and~\eqref{eq:yetanotheronefive}, we get
\begin{align}\label{eq:meq1-trace-ptilde-bound}
    \tr_{\mathcal F}(\widetilde P)
    \le
    q-\sum_{i=1}^{N-3}\theta_{(i)}.
\end{align}

\paragraph{Bounding $\frac{\norm{r}^2}{q}$.}
Let \(\mathcal V \in \R^{N \times k}\) have rows $v_i^\top$. 
Tightness~\eqref{eq:yetanotheronethree} gives \({\mathcal V}^\top {\mathcal V}=I_k\)
and \({\mathcal V}^\top w=0\). Hence \({\mathcal V}^\top\) is a contraction and annihilates \(w\), so
\begin{equation}\label{eq:meq1-r-bound}
\begin{split}
    \norm{r}^2
    =
    \norm{{\mathcal V}^\top (w_i^3)_i}^2
    &=
    \norm{{\mathcal V}^\top (I - w w^\top) (w_i^3)_i}^2 \\
    &\le
    \norm{(I - w w^\top)(w_i^3)_i}^2
    =
   (w_i^3)_i^\top (I - w w^\top)(w_i^3)_i
    = 
    \chi_3-\chi_2^2.
\end{split}
\end{equation}

\paragraph{Wrapping up the tight-frame case.} Combining~\eqref{eq:meq1-trace-ptilde-bound} and~\eqref{eq:meq1-r-bound},
\[
    \tr_{\mathcal F}(\widetilde P)
    +
    \frac{\norm{r}^2}{q}
    \le
    q +\frac{\chi_3-\chi_2^2}{q} - \sum_{i=1}^{N-3}\theta_{(i)} .
\]
By Lemma~\ref{lem:simplex-ineq}, every $\theta$ in the simplex $\{\theta \in \R^N : \theta_i \geq 0, \sum_i \theta_i = 1\}$ with $q > 0$ satisfies
\begin{align}\label{eq:lemmassimplexinequ}
   q +\frac{\chi_3-\chi_2^2}{q} - \sum_{i=1}^{N-3}\theta_{(i)} \le\frac23.
\end{align}
Therefore,
\(
    \tr_{\mathcal F}(\widetilde P)
    +
    \frac{\norm{r}^2}{q}
    \le
    \frac23.
\)

To prove~\eqref{eq:meq1-tight-stronger}, it remains to rule out equality. If equality holds in inequality~\eqref{eq:lemmassimplexinequ}, then Lemma~\ref{lem:simplex-ineq} says
either \(\theta=e_j\), or \(\theta\) has exactly three nonzero entries, each equal to
\(1/3\). The first case cannot happen, since it contradicts~\eqref{eq:F2}:
\(
    \norm{a_j}^2\ge w_j^2=1.
\)
In the second case, write \(\Theta\) for the three indices with \(\theta_i=1/3\). Then by~\eqref{eq:yetanotheronethree},
\[
    r=\sum_{i\in \Theta}w_i^3v_i=\frac13\sum_{i\in \Theta}w_iv_i=0, \qquad \text{and} \qquad  \widetilde P=\frac13\sum_{i\in \Theta}v_i^{} v_i^\top.
\]
For every \(i\in \Theta\),~\eqref{eq:F2} and~\eqref{eq:viwinorm} give
\(
    \norm{v_i}^2
    =
    \norm{a_i}^2-w_i^2
    \le
    1-\eta-\frac13
    =
    \frac23-\eta.
\)
Consequently,
\[
    \tr_{\mathcal F}(\widetilde P)
    +
    \frac{\norm{r}^2}{q}
    =
    \tr_{\mathcal F}(\widetilde P)
    \le
    \tr(\widetilde P)
    =
    \frac13\sum_{i\in \Theta}\norm{v_i}^2
    \le
    \frac23-\eta
    <
    \frac23.
\]
Thus equality is impossible, and~\eqref{eq:meq1-tight-stronger} holds in the tight case.

\paragraph{Handling loose frames.} 
The extension from tight frames to the general upper-frame setting~\eqref{eq:F1}
is established in Lemma~\ref{lem:loose-frame-extension} of
Appendix~\ref{app:loose-frame}, which proves precisely
\eqref{eq:meq1-two-plane}. The argument reduces the upper-frame case to
the tight-frame case by adjoining atoms to complete the frame to a tight
one, and then observing that
\(
\tr_{\mathcal F}(\widetilde P)
    +
    \frac{\norm{\operatorname{Proj}_{\mathcal F}r}^2}{q}
\)
is monotone nondecreasing under the addition of atoms, with
\(V\), \(W\), and \(\mathcal F\) held fixed.
\end{proof}

We can now prove Theorem~\ref{thm:meq1-complete}.

\begin{proof}[Proof of Theorem~\ref{thm:meq1-complete}]
By Lemma~\ref{lem:rkdeficient}, it suffices to rule out full-rank non-global
second-order critical points. Suppose, for contradiction, that such a point exists.
Lemma~\ref{lem:endpoint-plane} gives a two-dimensional subspace \(\mathcal F\) satisfying
\[
    \tr_{\mathcal F}(\widetilde P)
    +
    \frac{\norm{\operatorname{Proj}_{\mathcal F}r}^2}{q}
    \ge
    \frac23.
\]
This contradicts Lemma~\ref{lem:m1-coordinate}. Hence no full-rank non-global
second-order critical point exists. Therefore every second-order critical point is
globally optimal.
\end{proof}

\section{Perspectives}

We conclude with a few open questions.

\begin{itemize}[leftmargin=2em]
    \item \textbf{Relaxing by one dimension?}
    Does the complete-graph s-stress have a benign landscape for all ground truths whenever \(k\ge \ell+1\)? 
    We hope that Theorem~\ref{thm:meq1-complete} and the techniques developed in
Sections~\ref{sec:directions} and~\ref{sec:schur-companion} provide some
guidance toward resolving this question.

    \item \textbf{Incomplete graphs?}
    To what extent can the results of this paper be extended to incomplete graphs, where only a subset of the pairwise distances is observed?
    One possible route is suggested by \citep[\S8.8]{Criscitiello2025thesis}. 
    A main obstacle, however, is that our complete-graph analysis relies crucially on the positive semidefiniteness of the stress matrix (Lemma~\ref{lem:stress-psd}). For incomplete graphs, the analogous stress matrix appears not to satisfy any comparable positivity property---even in a weak form, such as having only a small number of negative eigenvalues. 

We emphasize that the interesting regime is that of globally or universally
rigid frameworks, since only then does minimizing the s-stress correspond to a meaningful recovery
problem. Important structured examples include general-position
lateration frameworks~\citep{alfakih2013lateration}.
By contrast, benign landscape results for non-rigid graphs are often much easier to establish.\footnote{For example, it is straightforward to show that benignness is preserved when two graphs are glued together by identifying a single common vertex. 
    This immediately implies that all trees have benign landscapes when \(k=\ell\), and, combined with Theorem~\ref{thm:intro-informal}, yields benign landscapes for block graphs when $k \geq 2(\ell+1)$.}

    \item \textbf{Geometric meaning of the descent directions?}
    Is there a geometric interpretation of the descent directions underlying our analysis, at the level of motions of point clouds or their pairwise distances?  In many benign-landscape results, the relevant descent directions admit a simple geometric interpretation.  For example, in matrix completion and phase retrieval, the descent direction consists of moving the current configuration toward the target configuration, suitably aligned (see Section~\ref{sec:relatedwork}). Such directions, however, are insufficient for understanding the s-stress.

    A key ingredient of our analysis is that the stress matrix at a first-order critical point is positive semidefinite (Lemma~\ref{lem:stress-psd}), a property closely connected to universal rigidity. 
    Perhaps rigidity theory can provide a geometric explanation of the descent mechanism?
\end{itemize}

\section*{Acknowledgments}

\paragraph{AI:}
The author used GPT-5.5 Pro during the preparation of this work. The descent directions (Sections~\ref{sec:directions}), the overall proof strategy, and the proof of the \(m=1\) case (Section~\ref{sec:meq1}) were developed before any AI assistance. AI was subsequently used to help derive several key estimates, simplify and interpret parts of the arguments, and help with writing. The main AI-assisted mathematical contribution was Lemma~\ref{lem:best-response}, which is largely due to GPT-5.5 Pro.
The author assumes responsibility for all content.

\bibliographystyle{plainnat}
\bibliography{references}

\appendix
\section{Auxiliary inequalities for the codimension-one case}
\label{app:m1-ineq}

\subsection{A simplex inequality}
\label{app:simplex-ineq}

\begin{lemma}
\label{lem:simplex-ineq}
Let \(N\ge3\), and let \(\theta=(\theta_i)_{i=1}^N\) be a probability vector, i.e., $\theta_i \geq 0$ and $\sum_i \theta_i = 1$. Let
\(
    \theta_{(1)}\le\cdots\le\theta_{(N)}
\)
be its nondecreasing rearrangement, and set
\[
    \chi_2:=\sum_i\theta_i^2,
    \qquad
    \chi_3:=\sum_i\theta_i^3, \qquad q=1-\chi_2.
\]
Then
\begin{align}\label{eq:app-simplex-ineq}
     q^2 + \chi_3-\chi_2^2 - q \sum_{i=1}^{N-3}\theta_{(i)} \le\frac23 q.
\end{align}
Equality can occur only if, up to permutation,
\[
    \theta=e_1
    \qquad\text{or}\qquad
    \theta=(0,\ldots,0,1/3,1/3,1/3).
\]
\end{lemma}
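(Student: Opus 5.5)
The plan is to first simplify the left-hand side algebraically, and then reduce everything to a one-parameter family of three-variable inequalities. Since \(q=1-\chi_2\), we have \(q^2-\chi_2^2=1-2\chi_2\). Hence
\[
    q^2+\chi_3-\chi_2^2=1-2\chi_2+\chi_3=\sum_i\theta_i(1-\theta_i)^2 ,
    \qquad
    q=\sum_i\theta_i(1-\theta_i).
\]
Write \(s:=\sum_{i=1}^{N-3}\theta_{(i)}\) for the mass of the \(N-3\) smallest entries. Then~\eqref{eq:app-simplex-ineq} is equivalent to
\[
    \Phi(\theta):=\sum_i g_s(\theta_i)\;\ge\;0,
    \qquad
    g_s(t):=t(1-t)\Bigl(t+s-\tfrac13\Bigr).
\]
The problem is now a sign statement about a separable sum, in which the coupling enters only through \(s\).

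I would then split into two cases according to \(s\).

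If \(s\ge 1/3\), every term \(g_s(\theta_i)\) is nonnegative, so the inequality holds. Equality would force each term to vanish: every \(\theta_i\in\{0,1\}\) with \(s=1/3\), which is impossible. So equality cannot occur here.

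If \(s<1/3\), I would treat the bottom \(N-3\) entries and the top three entries separately.

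\emph{Bottom entries.} For each bottom index, \((1-t)(t+s-\tfrac13)\ge -(1-t)(\tfrac13-s)\ge-(\tfrac13-s)\). Summing gives
\[
    \sum_{\text{bottom}}g_s(\theta_i)\ge -s\bigl(\tfrac13-s\bigr).
\]
Equality here requires every bottom entry to be \(0\), i.e.\ \(s=0\).

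\emph{Top entries.} Let \(x\ge y\ge z\) be the three largest entries, so \(x+y+z=1-s\), and each is at least every bottom entry. It then suffices to prove the three-variable inequality
\[
    g_s(x)+g_s(y)+g_s(z)\;\ge\; s\bigl(\tfrac13-s\bigr)
    \qquad\text{whenever } x+y+z=1-s,\ x,y,z\ge0,\ 0\le s<\tfrac13 .
\]

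The main obstacle is this three-variable inequality. I would first settle \(s=0\). There it reads \(\sum x(1-x)(x-\tfrac13)\ge0\) on the simplex. I would prove it by a Lagrange/boundary analysis of the cubic: interior critical points have at most two distinct values among \(x,y,z\), and the edges of the simplex reduce to one-variable polynomials. This should give minimum value \(0\), attained exactly at \((1,0,0)\) and \((\tfrac13,\tfrac13,\tfrac13)\). These are the two equality cases claimed in the lemma.

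For \(0<s<\tfrac13\), I would substitute \(x=(1-s)x'\), with \(x'\) on the standard simplex, and expand in \(s\). The aim is to show the excess over \(s(\tfrac13-s)\) is the \(s=0\) value plus a nonnegative remainder. If that direct expansion fails, I would use the extra constraint \(z\ge\max_{\text{bottom}}\theta_i\ge s/(N-3)\), or instead use the sharper bottom bound \(\sum_{\text{bottom}}\theta_i(1-\theta_i)(\tfrac13-s-\theta_i)\) in place of the crude \(s(\tfrac13-s)\). Either gives some slack to absorb the cross terms.

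Finally, I would track equality through each step. Equality in the bottom bound forces \(s=0\), and then equality in the \(s=0\) three-variable inequality forces \(\theta\) to be, up to permutation, \(e_1\) or \((0,\ldots,0,\tfrac13,\tfrac13,\tfrac13)\).
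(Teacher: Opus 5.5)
Your reduction is correct and coincides with the paper's. With $T:=1-s$, your bottom estimate $\sum_{\text{bottom}}g_s(\theta_i)\ge -s(\tfrac13-s)$ discards exactly $\sum_{\text{bottom}}\theta_i^2(T+\tfrac13-\theta_i)\ge 0$, which is what the paper drops. So your three-variable target is precisely the paper's inequality
\[
  f:=\tfrac23-T+\sum_{t\in\{x,y,z\}}t^2\bigl(T+\tfrac13-t\bigr)\ge 0,
  \qquad x,y,z\ge0,\quad x+y+z=T\in(\tfrac23,1].
\]
Your case $s\ge\frac13$ and your equality bookkeeping are also fine.

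The gap is that you never prove this inequality for $0<s<\frac13$, and the route you propose does not work. Your primary plan is to show that $f$ at $(Tx',Ty',Tz')$ is at least its $T=1$ value, and this is false. For $x'=(\tfrac12,\tfrac12,0)$ one gets $f=\tfrac23-T+\tfrac{T^2}{6}+\tfrac{T^3}{4}$. This equals $\tfrac1{12}$ at $T=1$ and has $T$-derivative $\tfrac1{12}>0$ there, so the remainder is strictly negative for small $s>0$: at $s=0.05$, $f\approx0.0814<0.0833\approx\tfrac1{12}$.

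The fallbacks cannot repair this. Spread the bottom mass evenly over $N-3$ entries and take the top triple near $(\tfrac T2,\tfrac T2,0)$. Then the constraint $z\ge s/(N-3)$ and the retained terms $\sum_{\text{bottom}}\theta_i^2(\tfrac43-s-\theta_i)\approx s^2(\tfrac43-s)/(N-3)$ both give slack that vanishes as $N\to\infty$, while the deficit stays of fixed size. So you really must prove $f\ge0$ for all nonnegative triples with sum $T$, uniformly in $T\in(\tfrac23,1]$.

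The missing idea is how to do this uniformly in $T$. The paper passes to the elementary symmetric functions $\PP=xy+yz+zx$ and $\RR=xyz$. A direct expansion gives
\[
  3f=(1-T)(2-T)+\PP(3T-2)-9\RR .
\]
Then $9\RR\le T\PP$ (AM--GM) and $\PP\le T^2/3$ give
\[
  3f\ \ge\ (1-T)(2-T-2\PP)\ \ge\ (1-T)\bigl(2-T-\tfrac23T^2\bigr)\ \ge\ \tfrac13(1-T)\ \ge\ 0
\]
for $T\in[\tfrac23,1]$. Equality holds only if $T=1$ and $9\RR=\PP$, which yields both extremal configurations at once.

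Alternatively, you could run your Lagrange analysis at each fixed $T$ rather than only at $T=1$. Interior critical points still have at most two distinct coordinates, since $t\mapsto2(T+\tfrac13)t-3t^2$ is a concave quadratic. But then you must check the family $(a,a,T-2a)$ and the boundary edges for every $T$, rather than obtaining them by perturbation from $T=1$.
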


\begin{proof}
Let \(x,y,z\) be the three largest entries of \(\theta\), and define $T := x+y+z = \sum_{i=N-2}^{N}\theta_{(i)}$.
Using $\sum_{i=1}^{N-3}\theta_{(i)} = 1 - T$ and rearranging,~\eqref{eq:app-simplex-ineq} is equivalent to
\begin{align}\label{eq:app-simplex-ineq-rear}
(1-\chi_2)\left(\frac23-T\right)+\chi_2-\chi_3\ge0.
\end{align}
To show~\eqref{eq:app-simplex-ineq-rear}, we distinguish two cases based on the size of $T$.

\paragraph{Case 1.} If \(T\le2/3\), then the first term in~\eqref{eq:app-simplex-ineq-rear} is nonnegative,
and
\[
    \chi_2-\chi_3=\sum_i\theta_i^2(1-\theta_i)\ge0.
\]
Thus~\eqref{eq:app-simplex-ineq-rear} holds. Equality in this case would force each
\(\theta_i\in\{0,1\}\), hence \(\theta=e_j\); but then \(T=1\), contradicting
\(T\le2/3\). Hence equality cannot occur in this case.

\paragraph{Case 2.} Now assume \(T>2/3\). Write the left-hand side of
\eqref{eq:app-simplex-ineq-rear} as
\begin{equation}\label{eq:cimepl1}
\begin{split}
    (1-\chi_2)\left(\frac23-T\right)+\chi_2-\chi_3 
    &=
    \frac23-T+\sum_i\theta_i^2\left(T+\frac13-\theta_i\right) \\
    &\geq
    \frac23 - T + \sum_{t\in\{x,y,z\}}t^2\left(T+\frac13-t\right),
\end{split}
\end{equation}
since $T+\frac13-\theta_i
    > 1-\theta_i
    \ge0$.
Let
\(
    \PP:=xy+xz+yz, \text{and }
    \RR:=xyz.
\)
A direct calculation gives
\begin{align}\label{eq:cimepl2}
    3\left[
    \frac23 - T +
    \sum_{t\in\{x,y,z\}}t^2\left(T+\frac13-t\right)
    \right]
    =
    (1-T)(2-T)+\PP(3T-2)-9\RR.
\end{align}
Using the elementary inequalities
\(
    9\RR\le T\PP,
\)
\(\PP\le T^2/3\) and $T \leq 1$,
\begin{equation}\label{eq:cimepl3}
\begin{split}
    (1-T)(2-T)+\PP(3T-2)-9\RR
    &\ge
    (1-T)(2-T)+\PP(2T-2) \\
    &=
    (1-T)(2-T-2\PP) \\
    &\geq
    (1-T)\left(2-T-\frac{2T^2}{3}\right) \\
    &\geq
    \frac13 (1-T) \geq 0.
\end{split}
\end{equation}
The final inequality comes from $2-T-\frac{2T^2}{3} \geq \frac13$ for \(T\in[2/3,1]\). Combining~\eqref{eq:cimepl1},~\eqref{eq:cimepl2} and~\eqref{eq:cimepl3} gives
\eqref{eq:app-simplex-ineq-rear}.

Finally, equality in the second case can occur only if \(T=1\) and
\(9\RR=T\PP\). This forces either two of \(x,y,z\) to be zero, giving \(\theta=e_j\), or
\(x=y=z=1/3\), giving
\[
    \theta=(0,\ldots,0,1/3,1/3,1/3),
\]
up to permutation.
\end{proof}

\subsection{Passing from tight frames to upper frames}
\label{app:loose-frame}

The proof of Lemma~\ref{lem:m1-coordinate} establishes the inequality
\[
    \tr_{\mathcal F}(\widetilde P)+\frac{\norm{r}^2}{q}<\frac23
\]
under the tight-frame assumption
\(
    \sum_i a_i^{} a_i^\top=P_{\cU}.
\)
For the original upper-frame hypothesis~\eqref{eq:F1}, we have the following.

\begin{lemma}
\label{lem:loose-frame-extension}
In the same setting as Lemma~\ref{lem:m1-coordinate},
\[
    \tr_{\mathcal F}(\widetilde P)
    +
    \frac{\norm{\operatorname{Proj}_{\mathcal F}r}^2}{q}
    <
    \frac23.
\]
\end{lemma}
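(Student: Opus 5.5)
The plan is to reduce to the tight-frame case already treated in the proof of Lemma~\ref{lem:m1-coordinate}. The reduction completes the given atoms to a tight frame by adjoining many very small atoms, and then lets those extra atoms shrink so that their effect on \(\widetilde P\), \(r\), \(q\) disappears. Set \(E:=P_{\cU}-\sum_i a_i^{}a_i^\top\succeq0\) by~\eqref{eq:F1}, with spectral decomposition \(E=\sum_{j}\lambda_j e_j^{}e_j^\top\) in \(\cU\). For an integer \(t\ge1\), adjoin \(t\) copies of each atom \(b_j:=\sqrt{\lambda_j/t}\,e_j\in\cU\). The enlarged family is a Parseval frame for \(\cU\), i.e.\ it satisfies~\eqref{eq:tight-F1}. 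Since \(\norm{b_j}^2\le 1/t\), it also satisfies~\eqref{eq:F2} with \(\eta_t:=\min\{\eta,1-1/t\}>0\) once \(t\ge2\). Here \(V,W,\mathcal F\) are held fixed.

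\textbf{Step 1.} Apply the tight-case bound~\eqref{eq:meq1-tight-stronger} to the enlarged frame, together with \(\norm{\Proj_{\mathcal F}r_t}\le\norm{r_t}\). This gives
\[
\tr_{\mathcal F}(\widetilde P_t)+\norm{\Proj_{\mathcal F}r_t}^2/q_t<\tfrac23 .
\]

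\textbf{Step 2.} Compute how the adjoined atoms change the three quantities. Each extra atom contributes terms of degree four in \(b_j\) to \(\widetilde P\) and to \(1-q\), and a term of degree four to \(r\) as well (namely \(w^3 v\)). With \(t\) copies of an atom of size \(t^{-1/2}\), each total contribution is \(O(t\cdot t^{-2})=O(1/t)\). Hence \(\widetilde P_t\to\widetilde P\), \(r_t\to r\) and \(q_t\to q>0\) as \(t\to\infty\). In the limit this yields the non-strict bound \(\tr_{\mathcal F}(\widetilde P)+\norm{\Proj_{\mathcal F}r}^2/q\le\frac23\). Alternatively, one could try to show directly that the left-hand side is nondecreasing under the addition of atoms. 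That is clear for \(\widetilde P\) and \(q\), but not for \(\Proj_{\mathcal F}r\), whose norm can drop because of cross terms. So I would rely on the limiting argument, which sidesteps monotonicity entirely.

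\textbf{Step 3 (main obstacle).} Strictness is lost in the limit, so the strict inequality has to be recovered separately. Rather than pass the strict inequality through the limit, I would pass the explicit tight-case bound
\[
q_t+\frac{\chi_{3,t}-\chi_{2,t}^2}{q_t}-\sum_{i\le N_t-3}\theta^{(t)}_{(i)}
\]
through it. Here \(\theta^{(t)}\) consists of the original \(\theta_i=w_i^2\) together with a large number of tiny entries of total mass \(1-\sum_i\theta_i\). In the limit, the tiny entries contribute nothing to \(\chi_2,\chi_3\), but they contribute their full mass to the sum of the smallest entries. The limiting bound is therefore the simplex expression of Lemma~\ref{lem:simplex-ineq}, evaluated at the original \(\theta\) (a sub-probability vector) with the missing mass counted among the small entries.

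I would then redo the equality analysis of Lemma~\ref{lem:simplex-ineq} for this limiting expression, and argue that equality forces one of two cases:
\begin{enumerate}
\item Either \(\theta=e_j\) or a single \(w_j^2\) is close to \(1\); both are excluded by~\eqref{eq:F2}.
\item Or exactly three \(\theta_i=\frac13\), with no missing mass. This is then a genuinely tight configuration, and the argument at the end of the tight case, which uses \(\norm{v_i}^2\le\frac23-\eta\), gives strictness.
\end{enumerate}
If this equality bookkeeping turns out messy, the fallback is a compactness argument. Suppose equality held in the limit. Then the limiting bound attains \(\frac23\), and the equality characterization of Lemma~\ref{lem:simplex-ineq}, which is stable under the limit because the function involved is continuous in the mass distribution, pins down the same two cases. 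Both are ruled out exactly as in the tight case.
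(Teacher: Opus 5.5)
Your route differs from the paper's and can be made to work, but two of your justifications need fixing.

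\textbf{Comparison with the paper.} The paper also completes the frame to a tight one, splitting $P_{\cU}-\sum_i a_i^{}a_i^\top$ into rank-one atoms of squared norm at most $1-\eta$. Instead of a limit, it shows that the \emph{combined} quantity $\mathcal Q_{\mathcal F}:=\tr_{\mathcal F}(\widetilde P)+\norm{\Proj_{\mathcal F}r}^2/q$ is nondecreasing when an atom $\tilde a$ is adjoined, where $\tilde v=V^\top\tilde a$ and $\tilde w=W^\top\tilde a$. This follows from the exact identity
\[
\mathcal Q_{\mathcal F}^{\rm new}-\mathcal Q_{\mathcal F}
=\frac{\norm{q\tilde w\,\Proj_{\mathcal F}\tilde v+\tilde w^2\,\Proj_{\mathcal F}r}^2}{q\,(q-\tilde w^4)}\ge0 .
\]
So $\norm{\Proj_{\mathcal F}r}$ alone can drop, as you say, but the gain in $\tr_{\mathcal F}(\widetilde P)$ and the decrease of $q$ always compensate. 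Monotonicity then gives strictness for free: the original value is at most the completed value, which is $<\frac23$ by the tight case.

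Your limiting argument avoids finding this identity. The price is that strictness is lost and you must reopen Lemma~\ref{lem:simplex-ineq}. That reopening does go through. Its proof uses $\sum_i\theta_i=1$ only to write the small-entry sum as $1-T$; otherwise it needs only $0\le\theta_i\le1$ and $T\le1$. Case~1 never gives equality, and equality in Case~2 forces $T=1$. So a positive missing mass $\mu=1-\sum_i w_i^2$ already gives strict inequality. Only $\mu=0$ remains, and there $\theta=e_j$ is excluded by~\eqref{eq:F2}.

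\textbf{Two corrections.} First, in the three-$\frac13$ case you call the configuration ``genuinely tight'', which is false. The condition $\mu=W^\top EW=0$ only gives $EW=0$ (since $E\succeq0$), not $E=0$, so tightness cannot be invoked as such. What the end of the tight-case argument actually uses is $\sum_i w_iv_i=0$, and this does hold:
\[
\sum_i w_iv_i=V^\top(P_{\cU}-E)W=-V^\top EW=0 .
\]
Hence $r=0$, and the $\frac23-\eta$ bound goes through. Equivalently, $EW=0$ means every completion atom has $\tilde w=0$, so it leaves $\widetilde P,r,q$ unchanged, and the strict tight-case bound applies directly to the original quantities.

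Second, discard the compactness ``fallback'', because equality characterizations do not pass to limits. When $\mu>0$, no approximant $\theta^{(t)}$ is an equality case of Lemma~\ref{lem:simplex-ineq}, since each has many tiny positive entries. Continuity therefore says nothing about whether the limit attains $\frac23$. The direct equality analysis of the limiting expression is what does the work.
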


\begin{proof}
Assume \(\eta<1\).\footnote{If \(\eta=1\), then~\eqref{eq:F2} forces \(a_i=0\) for every \(i\).
Consequently,
\(
    \widetilde P=0,
    r=0,
    q=1,
\)
and the desired inequality is immediate.}
Let
\[
    \mathcal R
    :=
    P_{\cU}-\sum_i a_i a_i^\top
    \succeq0.
\]
Choose a rank-one decomposition
\[
    \mathcal R=\sum_j \tilde a_j\tilde a_j^\top
\]
such that every added atom satisfies
\(
    \norm{\tilde a_j}^2\le 1-\eta.
\)
This can always be achieved by first taking any finite rank-one decomposition of
\(\mathcal R\), and then splitting each rank-one term into sufficiently many equal
pieces. After adjoining the atoms \(\tilde a_j\), the frame becomes tight:
\[
    \sum_i a_i a_i^\top+\sum_j \tilde a_j\tilde a_j^\top=P_{\cU},
\]
while the atom-norm condition~\eqref{eq:F2} remains valid.

Fix \(V,W\), and a two-dimensional subspace \(\mathcal F\subseteq\R^k\).
For any intermediate frame obtained while adjoining the atoms one at a time,
define \(\widetilde P,r,q\) by~\eqref{eq:meq1-def-ptilde-r-q}, and set
\[
    \mathcal Q_{\mathcal F}
    :=
    \tr_{\mathcal F}(\widetilde P)
    +
    \frac{\norm{\operatorname{Proj}_{\mathcal F}r}^2}{q}.
\]

\paragraph{Denominators are positive.} We first note that every denominator \(q\) arising in this process is strictly
positive. Let \(q_{\rm final}\) denote the value after all added atoms have been
adjoined. Since the completed frame is tight,
\[
    \sum_i w_i^2=1,
\]
where the index ranges over both the original and added atoms. Moreover,
\eqref{eq:F2} gives
\[
    \max_i w_i^2\le \max_i \norm{a_i}^2\le 1-\eta.
\]
Hence
\(
    \sum_i w_i^4
    \le
    \left(\max_i w_i^2\right)\sum_i w_i^2
    \le
    1-\eta,
\)
and therefore
\[
    q_{\rm final}
    =
    1-\sum_i w_i^4
    \ge
    \eta
    >
    0.
\]
Since adjoining an atom only decreases \(q\), every intermediate value of \(q\)
is at least \(q_{\rm final}\), and is therefore strictly positive.

\paragraph{\(\mathcal Q_{\mathcal F}\) is monotone.} We next show that \(\mathcal Q_{\mathcal F}\) is monotone nondecreasing as atoms
are adjoined. 
Consider one additional atom \(\tilde a\in\cU\), and write
\[
    \tilde v:=V^\top\tilde a\in\R^k,
    \qquad
    \tilde w:=W^\top\tilde a\in\R.
\]
Adding the atom $\tilde a$ updates $\widetilde P, \operatorname{Proj}_{\mathcal F}r$ and $q$ as follows:
\[
    \widetilde P\mapsto \widetilde P+{\tilde w}^2 {\tilde v} {\tilde v}^\top,
    \qquad
    \operatorname{Proj}_{\mathcal F} r \mapsto \operatorname{Proj}_{\mathcal F}r + {\tilde w}^3 \operatorname{Proj}_{\mathcal F} \tilde v,
    \qquad
    q\mapsto q-{\tilde w}^4.
\]
Thus, letting \(\mathcal Q_{\mathcal F}^{\rm new}\) denote
\(\mathcal Q_{\mathcal F}\) after adjoining the atom \(\tilde a\),
\[
\begin{aligned}
    \mathcal Q_{\mathcal F}^{\rm new}-\mathcal Q_{\mathcal F}
    &=
    \tr_{\mathcal F}({\tilde w}^2 {\tilde v} {\tilde v}^\top)
    +
    \frac{\norm{\operatorname{Proj}_{\mathcal F}r + {\tilde w}^3 \operatorname{Proj}_{\mathcal F} \tilde v}^2}{q-{\tilde w}^4}
    -
    \frac{\norm{\operatorname{Proj}_{\mathcal F}r}^2}{q} \\
     &=
     {\tilde w}^2 \norm{\operatorname{Proj}_{\mathcal F} \tilde v}^2
    +
    \frac{\norm{\operatorname{Proj}_{\mathcal F}r + {\tilde w}^3 \operatorname{Proj}_{\mathcal F} \tilde v}^2}{q-{\tilde w}^4}
    -
    \frac{\norm{\operatorname{Proj}_{\mathcal F}r}^2}{q} \\
    &=
    \frac{\norm{q{\tilde w} \cdot {\operatorname{Proj}_{\mathcal F} \tilde v}+{\tilde w}^2 \cdot {\operatorname{Proj}_{\mathcal F} r}}^2}{q(q-{\tilde w}^4)}
    \ge0,
\end{aligned}
\]
where both denominators are positive by the preceding paragraph.

Therefore \(\mathcal Q_{\mathcal F}\) for the original upper frame is bounded
above by its value for the completed tight frame. For the latter, the stronger
tight-frame estimate from the proof of Lemma~\ref{lem:m1-coordinate} gives
\[
    \mathcal Q_{\mathcal F}
    \le
    \tr_{\mathcal F}(\widetilde P)
    +
    \frac{\norm{r}^2}{q}
    <
    \frac23.
\]
Hence the same strict bound holds for the original upper frame.
\end{proof}

\end{document}